\tikzstyle{vertex}=[circle, draw, inner sep=0pt, minimum size=4.5pt]
\title{Star Coloring of Tensor Product of Two Graphs}
\author{Harshit Kumar Choudhary, 
Swati Kumari
  and  I. Vinod Reddy}
\institute{
Department of Computer Science and Engineering,  IIT Bhilai, India\\
\email{harshitk@iitbhilai.ac.in, swatik@iitbhilai.ac.in, vinod@iitbhilai.ac.in}}
\begin{document}
	
	\pagestyle{plain}
	
	\maketitle
 \begin{abstract}
A star coloring of a graph $G$ is a proper vertex coloring such that no path on four vertices is bicolored. The smallest integer $k$ for which $G$ admits a star coloring with $k$ colors is called the star chromatic number of $G$, denoted as $\chi_s(G)$. In this paper, we study the star coloring of tensor product of two graphs and obtain the following results.

\begin{enumerate}
\item We give an upper bound on the star chromatic number of the tensor product of two arbitrary graphs.
\item We determine the exact value of the star chromatic number of tensor product two paths.
\item We show that the star chromatic number of tensor product of two cycles is five, except for $C_3 \times C_3$ and $C_3 \times C_5$. 

\item We give tight bounds for the star chromatic number of tensor product of a cycle and a path. 
\end{enumerate}
\end{abstract}

\section{Introduction}\label{S:intro}
A proper $k$-coloring of a graph $G$ is an assignment of colors to the vertices of $G$ from the set $\{1,2,\ldots,k\}$ such that no two adjacent vertices are assigned the same color. The smallest integer $k$ for which $G$ admits a  proper $k$-coloring is called the \emph{chromatic number} of $G$, denoted by $\chi(G)$.  A $k$-star coloring of a graph $G$ is a proper $k$-coloring of $G$ such that every path on four vertices uses at least three distinct colors. The smallest integer $k$ such that $G$ has a $k$-star coloring is 
called \emph{star chromatic number} of $G$, denoted by $\chi_s(G)$.

Star coloring of graphs was introduced by Gr{\"u}nbaum in~\cite{grunbaum1973acyclic}. The problem is NP-complete even when restricted to planar bipartite graphs~\cite{albertson2004coloring}  and line graphs of subcubic graphs~\cite{lei2018star}. The problem is polynomial time solvable on cographs~\cite{lyons2011acyclic}, line graphs of tress~\cite{omoomi2018polynomial}, outer planar graphs and 2-dimensional grids~\cite{fertin2004star}. 
 Recently  Shalu and Cyriac~\cite{shalu2022complexity} showed that for $k \in \{4,5\}$, the $k$-star coloring is {\sf NP}-complete for graphs of degree at most four.

The \emph{Cartesian product} and \emph{tensor product} of two graphs $G$ and $H$ are denoted by $G\square H$ and $G \times H$ respectively.  The vertex set of the above products is $V(G) \times V(H)$ and their edges are determined as follows. Let $(u,v), (u',v') \in V(G) \times V(H)$. Then $(u,v) (u',v')$ belongs to
\begin{enumerate}

\item $E(G \square H)$ if either $u=u'$ and $vv'\in E(H)$, or $v=v'$ and $uu'\in E(G)$.
\item $E(G \times H)$ if $uu'\in E(G)$ and $vv'\in E(H)$.
\end{enumerate}

Proper coloring has been well studied with respect to various graph products. The chromatic number of the Cartesian product of two graphs $G$ and $H$ is equal to the maximum of chromatic numbers of $G$ and $H$~\cite{sabidussi1957graphs}. The chromatic number of a lexicographic product of two graphs $G$ and $H$ is equal to the $b$-fold chromatic number of $G$, where $b=\chi(G)$~\cite{geller1975chromatic}. The chromatic number of tensor product of two graphs $G$ and $H$ is at most the chromatic numbers of graphs $G$ and $H$~\cite{shitov2019counterexamples}.

Star coloring of the Cartesian product of graphs has been studied in several papers~\cite{fertin2004star,han2016star,akbari2019star}. Fertin et al.~\cite{fertin2004star} established an upper bound on the star chromatic number of the Cartesian product of two arbitrary graphs. They gave exact values of the star chromatic number for the Cartesian product of two paths. Han et al.~\cite{han2016star} studied the star coloring of Cartesian products of paths and cycles and determined the star chromatic number for some of the cases. Extending this work, Akbari et al.~\cite{akbari2019star} studied the star coloring of the Cartesian product of two cycles. They showed that the Cartesian product of any two cycles except $C_3 \square C_3$ and $C_3 \square C_5$ has a $5$-star coloring. 
 
Motivated by the results obtained in ~\cite{fertin2004star,han2016star,akbari2019star}, in this paper we focus on star coloring of the tensor product of graphs. In Section~\ref{S:tensor}, we establish an upper bound on star chromatic number of tensor product of two arbitrary graphs. In Section~\ref{sec-PP}  we give exact values of star chromatic number of tensor product of two paths. In Section~\ref{sec-CC}, we study the star coloring of tensor product of two cycles. We showed that tensor product of two cycles except $C_3 \times C_3$ and $C_3 \times C_5$ has a $5$-star coloring. In Section~\ref{sec-CP}, we study the star coloring of tensor product of a cycle and path. 
In some cases, we give the exact value of the star chromatic number and in some cases we give upper bounds for the star chromatic number.

\section{Preliminaries}\label{S:prelims}
In this section, we introduce some basic notation and terminology related to graph theory that we need throughout the paper. 
All the graphs considered in this paper are undirected, finite and simple (no self-loops and no multiple edges). 
For a  graph $G=(V,E)$, by $V(G)$ and $E(G)$ we denote the vertex set and edge set of $G$ respectively. The set $\{1,2,\ldots,k\}$ is denoted by $[k]$. 
We use $P_n$ and $C_n$ to denote a path and a cycle on $n$ vertices respectively. We denote the complete bipartite graph using $K_{m,n}$. For any positive integer $n$, $K_{1,n}$ is called a star graph.

In the proofs of our results we use the following known results.

\begin{lemma}~\cite{fertin2004star}
For a positive integer $n$, where $n \geq 2$, we have 

$$
		\chi_s(P_n) = 
			\begin{cases} 
				2 &\text{if $n \in \{2,3\}$}; \\
				3 &\text{otherwise.} \\
			\end{cases}
$$	
\end{lemma}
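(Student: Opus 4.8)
The plan is to handle the two small cases and the general case $n \ge 4$ separately, establishing matching lower and upper bounds in each. Throughout I denote the vertices of $P_n$ by $v_1, v_2, \ldots, v_n$ in path order. First, for $n \in \{2,3\}$, any proper coloring already uses at least two colors since $P_n$ has at least one edge, so $\chi_s(P_n) \ge 2$; on the other hand, $P_2$ and $P_3$ contain no path on four vertices, so the star condition is vacuous and any proper $2$-coloring (namely $1,2$ for $P_2$ and $1,2,1$ for $P_3$) is already a valid star coloring, giving $\chi_s(P_n) \le 2$. Hence $\chi_s(P_n) = 2$ for $n \in \{2,3\}$.

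For $n \ge 4$ I would first argue the lower bound $\chi_s(P_n) \ge 3$. Suppose toward a contradiction that $P_n$ admits a star coloring with only two colors. Since $P_n$ is connected and bipartite, its proper $2$-coloring is unique up to swapping the two colors, namely the alternating pattern $1,2,1,2,\ldots$. But then the subpath $v_1 v_2 v_3 v_4$ receives colors $1,2,1,2$ and is therefore bicolored, contradicting the definition of a star coloring. Thus at least three colors are required.

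For the upper bound $\chi_s(P_n) \le 3$, I would exhibit the periodic coloring $c(v_i) = ((i-1) \bmod 3) + 1$, i.e. the repeating pattern $1,2,3,1,2,3,\ldots$. This is clearly proper, since consecutive vertices receive distinct colors. The key step is verifying the star condition: every path on four vertices is a subpath $v_i v_{i+1} v_{i+2} v_{i+3}$, whose color sequence is one of the three windows $(1,2,3,1)$, $(2,3,1,2)$, $(3,1,2,3)$ of the periodic pattern, each of which uses all three colors and in particular at least three distinct colors. Combining the two bounds yields $\chi_s(P_n) = 3$ for $n \ge 4$.

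The only mildly delicate point—and the step I expect to require the most care—is the verification that no bicolored $P_4$ arises, which here collapses to the finite check above precisely because the coloring is periodic with period $3$; for longer or less structured colorings this is exactly where such arguments tend to become involved, but the periodicity makes it routine in the present case.
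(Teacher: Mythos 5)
Your proof is correct. The paper does not actually prove this lemma---it is quoted from Fertin, Raspaud, and Reed~\cite{fertin2004star} without proof---and your argument (vacuous star condition for $n\le 3$; uniqueness of the proper $2$-coloring of a connected bipartite graph forcing a bicolored $v_1v_2v_3v_4$ for the lower bound; the periodic pattern $1,2,3,1,2,3,\ldots$ for the upper bound, checked on the three cyclic windows) is the standard and complete way to establish it.
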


\begin{lemma}~\cite{fertin2004star}\label{lem-c}
For a positive integer $n$, where $n \geq 3$, we have 

$$
		\chi_s(C_n) = 
			\begin{cases} 
				4 &\text{if $n =5$}; \\
				3 &\text{otherwise.} \\
			\end{cases}
$$	
\end{lemma}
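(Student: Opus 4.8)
The plan is to prove matching lower and upper bounds. For the lower bound $\chi_s(C_n)\ge 3$, I would argue that two colors never suffice when $n\ge 3$: the triangle $C_3=K_3$ already needs three colors just to be properly colored, while for $n\ge 4$ the only proper $2$-coloring available (and only when $n$ is even) is the alternating one $1,2,1,2,\dots$, which contains the bicolored path $1,2,1,2$, and for odd $n$ no proper $2$-coloring exists at all. Either way $\chi_s(C_n)\ge 3$ for every $n\ge 3$.

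For the upper bound I would first record a convenient local reformulation: in a \emph{proper} coloring a four-vertex path $a,b,c,d$ is bicolored if and only if $a=c$ and $b=d$, i.e.\ it has the shape $x,y,x,y$. Since in $C_n$ the only four-vertex paths are the windows of four consecutive vertices, a proper cyclic coloring $c_1,\dots,c_n$ is a star coloring exactly when no window $c_i,c_{i+1},c_{i+2},c_{i+3}$ equals $x,y,x,y$. I would then exhibit explicit $3$-colorings by cases on $n \bmod 3$. When $n\equiv 0$, repeating the block $1,2,3$ works, since every window is a cyclic shift of $1,2,3,1$ and uses three colors. When $n\equiv 1$, I would take $(1,2,3)$ repeated $(n-1)/3$ times followed by a single vertex colored $2$ (for example $C_4$ as $1,2,3,2$ and $C_7$ as $1,2,3,1,2,3,2$); only the three windows straddling the seam need to be checked. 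When $n\equiv 2$ with $n\ge 8$, I would take $(1,2,3)$ repeated $(n-5)/3$ times followed by the tail $1,3,2,1,3$ (for example $C_8$ as $1,2,3,1,3,2,1,3$); again only the windows meeting the tail and the wrap-around must be verified. This settles $\chi_s(C_n)=3$ for all $n\ne 5$.

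The genuinely exceptional case is $C_5$, and handling it is the crux. On the upper side a $4$-coloring such as $1,2,3,1,4$ is immediately seen to be a star coloring, so $\chi_s(C_5)\le 4$. The real work is showing $\chi_s(C_5)\ge 4$, i.e.\ that no proper $3$-coloring of $C_5$ avoids every bicolored $4$-window. I would prove this by a short exhaustive analysis: normalize $c_1=1,\ c_2=2$ and branch on $c_3\in\{1,3\}$. In each branch the properness constraints, together with the constraint coming from the cyclic wrap ($c_5\ne c_1$), force the remaining entries, and every completion produces a window of the form $x,y,x,y$ (such as $c_4,c_5,c_1,c_2$ or $c_5,c_1,c_2,c_3$). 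As all branches fail, three colors are impossible and hence $\chi_s(C_5)=4$.

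The main obstacle is twofold. First, one must find the correct ``defect'' block for the residue class $n\equiv 2\pmod 3$: naively appending two vertices to the periodic pattern $1,2,3$ is forced (up to renaming) into a bicolored window, which is exactly the phenomenon that breaks $C_5$, so one must instead perturb a longer tail as above, and this repair only becomes available once $n\ge 8$. Second, one must carry out the $C_5$ impossibility argument so that the finitely many branches are genuinely exhaustive rather than merely suggestive; the local reformulation above is what keeps that case check finite and clean.
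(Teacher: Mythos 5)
The paper does not prove this lemma at all: it is quoted verbatim from Fertin, Raspaud and Reed~\cite{fertin2004star} and used as a black box, so there is no in-paper argument to compare against. Your blind proof is, as far as I can check, correct and complete. The lower bound $\chi_s(C_n)\ge 3$ is immediate as you say; the local reformulation (in a proper coloring a $P_4$ is bicolored iff it has the shape $x,y,x,y$) is right and does reduce the star condition on $C_n$ to checking the $n$ cyclic windows of length four; the three explicit patterns for $n\equiv 0,1,2 \pmod 3$ (the last requiring the tail $1,3,2,1,3$ and hence $n\ge 8$) all verify, with only the seam windows needing inspection; and your branching for $C_5$ ($c_1=1$, $c_2=2$, then $c_3\in\{1,3\}$, with properness and the wrap condition forcing each completion into a window $x,y,x,y$) is genuinely exhaustive once one also splits the $c_3=3$ branch on $c_4\in\{1,2\}$ — every leaf does produce a bicolored window. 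One small point worth making explicit if you write this up: for $n=3$ there are no paths on four vertices, so the star condition is vacuous and $\chi_s(C_3)=\chi(C_3)=3$ directly; your $n\equiv 0$ pattern covers this case but the window-checking language silently assumes $n\ge 4$.
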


\begin{lemma}\label{lem-subgraph}
    For any subgraph $H$ of a graph $G$, we have $\chi_s(H) \leq \chi_s(G)$.
\end{lemma}

The following result on star coloring
of the Cartesian product of two paths is used in our results.

\begin{lemma}\label{lem-1}\cite{fertin2004star}
For every pair of positive integers $m$ and $n$, where $2 \leq m \leq n$, we have
\[
\chi_s(P_m \square P_n) = 
\begin{cases}
    3, &\text{if $m=n=2$};\\
    4, &\text{if $m\in \{2,3\}$, $n \geq 3$};\\
    5, &\text{if $m\geq 4$, $n\geq 4$.}
\end{cases}
\]
    
\end{lemma}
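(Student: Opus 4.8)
The plan is to treat the three regimes separately, proving a matching lower and upper bound in each; throughout I use that a star coloring is a proper coloring in which every $P_4$ receives at least three colors, equivalently a proper coloring in which the union of any two color classes induces a \emph{star forest} (a disjoint union of stars, since a bicolored $P_4$ is precisely a $P_4$ inside a two-color subgraph). The case $m=n=2$ is immediate: $P_2\square P_2$ is isomorphic to $C_4$, so Lemma~\ref{lem-c} gives $\chi_s(P_2\square P_2)=3$.

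For the lower bounds I would isolate the smallest grid in each regime and then propagate via Lemma~\ref{lem-subgraph}. In the regime $m\in\{2,3\}$, $n\ge 3$, every such grid contains $P_2\square P_3$ as a subgraph, so it suffices to show $\chi_s(P_2\square P_3)\ge 4$. I would assume a proper $3$-coloring of the $2\times 3$ grid and derive a contradiction: after permuting colors so that the two middle-column vertices receive colors $1$ and $2$, properness restricts each of the four corner vertices to only two possible colors, and a short case analysis shows that some $P_4$ running through the central vertical edge or around a face is forced to be bicolored. In the regime $m,n\ge 4$, every grid contains $P_4\square P_4$, so it suffices to show $\chi_s(P_4\square P_4)\ge 5$, i.e.\ that the $4\times 4$ grid admits no $4$-star coloring; I would prove this by a structured finite case analysis, using the star-forest characterization to control the two color classes meeting at each interior degree-$4$ vertex.

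For the upper bounds I would give explicit periodic colorings. For $m,n\ge 4$ (indeed for all $m,n$), set $c(i,j)=(2i+j)\bmod 5$. This is proper, since a horizontal step changes the color by $\pm 1$ and a vertical step by $\pm 2$, both nonzero modulo $5$; moreover each residue difference corresponds to a unique physical move ($+1$ right, $-1$ left, $+2$ down, $-2$ up), so a bicolored $P_4$ would have color sequence $x,y,x,y$, forcing the second step to be the inverse of the first and hence to revisit a vertex. No simple $P_4$ is therefore bicolored, so $c$ is a $5$-star coloring, and with the lower bound this gives $\chi_s(P_m\square P_n)=5$ for $m,n\ge 4$. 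For $m\in\{2,3\}$, $n\ge 3$ I would exhibit analogous width-$2$ and width-$3$ periodic $4$-colorings of the infinite strip and verify over a single period that no $P_4$ is bicolored; restricting to $P_m\square P_n$ yields the matching upper bound $4$.

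The main obstacle is the lower bound $\chi_s(P_4\square P_4)\ge 5$: unlike the $2\times 3$ argument, the $4\times 4$ grid has interior vertices of degree $4$ and a large number of overlapping $P_4$'s, so a naive enumeration is unwieldy. I would tame it by fixing the colors on a central $2\times 2$ face, which as a $C_4$ already uses three colors, and then propagating the star-forest constraint outward, showing that every extension to a proper $4$-coloring creates a bicolored $P_4$. This localized propagation is where the care is concentrated, and it is the step I expect to be hardest to write cleanly.
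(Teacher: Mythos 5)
This lemma is imported from Fertin, Raspaud and Reed~\cite{fertin2004star}; the paper gives no proof of its own, so there is no internal argument to compare yours against, and your proposal is best judged as a reconstruction of the original result. On that footing it is sound and follows what is essentially the standard strategy: lower bounds from the two minimal obstructions $P_2\square P_3$ and $P_4\square P_4$ propagated via Lemma~\ref{lem-subgraph}, upper bounds from explicit periodic colorings, and the base case $P_2\square P_2\cong C_4$ handled by Lemma~\ref{lem-c}. Your verification that $c(i,j)=(2i+j)\bmod 5$ is a $5$-star coloring is correct and clean: the four unit moves yield the four distinct nonzero residues modulo $5$, so two vertices at distance two along a path sharing a color would force the second step to be the inverse of the first, contradicting that the path's vertices are distinct. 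The $2\times 3$ case analysis you sketch does close (with the middle rung colored $1,2$, each outer column is forced to one of two patterns, and all four combinations produce a bicolored $P_4$). Two pieces remain as IOUs: the explicit width-$2$ and width-$3$ periodic $4$-colorings are asserted but never written down, and the lower bound $\chi_s(P_4\square P_4)\ge 5$ is only outlined; the latter is the genuine kernel of the lemma and, as you acknowledge, admits no argument slicker than a structured finite case analysis (or an appeal to the published source). As written the proposal is therefore a correct plan rather than a complete proof, but no step in it is wrong or would fail.
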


We denote the graphs shown in the Fig.~\ref{fig:zy-graph} as $Z$-graph and $Y$-graph respectively. We found that $\chi_s(Z)=\chi_s(Y)=5$ by performing a tedious case-by-case analysis. This helps to establish the lower bounds in some cases. 

\begin{center}
\begin{figure}[!h] 
	\begin{subfigure}[t]{0.5\textwidth}		
			 \includegraphics[trim=7cm 21.5cm 8cm 2.9cm, clip=true, scale=0.8]{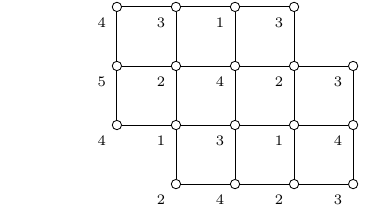}
    \label{fig:z-graph}		
	\end{subfigure}
	\begin{subfigure}[t]{0.5\textwidth}		
	  \includegraphics[trim=7cm 21.5cm 8cm 2.9cm, clip=true, scale=0.8]{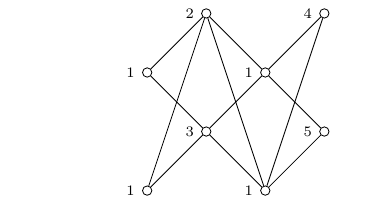}
    \label{fig:y-graph}
	\end{subfigure}	
	\caption{~Star coloring of $Z$-graph (left) and $Y$-graph (right). }
    \label{fig:zy-graph}

\end{figure}
\end{center}

A $k$-star coloring of $G \times H$ can be represented by a pattern (matrix) with $n_1$ rows and $n_2$ columns, where $n_1=|V(G)|$ and $n_2=|V(H)|$. For example, a $3$-star coloring of  $P_3 \times P_4$ can be represented by a pattern as shown in the Fig.~\ref{fig:prelim}.

\begin{center}
\begin{figure}[!h] 
	\begin{subfigure}[t]{0.5\textwidth}		
			 \includegraphics[trim=6cm 21.5cm 9cm 2.9cm, clip=true, scale=0.8]{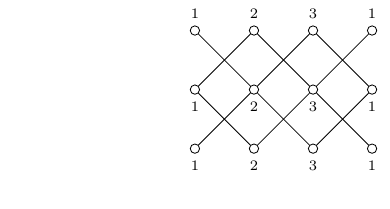}
	\end{subfigure}
	\begin{subfigure}[t]{0.5\textwidth}		
	  \includegraphics[trim=8cm 23cm 8cm 2.5cm, clip=true, scale=1]{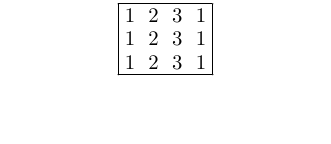}
	\end{subfigure}	
	\caption{~A 3-star coloring of $P_3 \times P_4$ (left) and coloring pattern representing 3-star coloring of $P_3 \times P_4$ (right). }
    \label{fig:prelim}

\end{figure}
\end{center}

\begin{remark}
For every $m, n \geq 3$, $p,q \geq 1$, if $\chi_s(C_m \times C_n) \leq k$ then $\chi_s(C_{pm} \times C_{qn}) \leq k$.
\end{remark}
Given a $k$-star coloring of $C_m \times C_n$, we can obtain a $k$-star coloring of $C_{pm} \times C_{qn}$ by repeating the coloring pattern $p$ times vertically and $q$ times horizontally. For example, a $5$-star coloring of $C_6 \times C_8$ can be obtained from a $5$-star coloring of $C_3 \times C_4$ by repeating the pattern two times vertically and two times horizontally as shown in Fig.~\ref{fig-suitable}.

\begin{figure}[ht]
 \centering
\begin{tikzpicture}[scale=.02, transform shape]
\node [draw, shape=circle] (v1) at (-300,-20) {};
\node [draw, shape=circle] (v2) at (-220,-20) {};
\node [draw, shape=circle] (v3) at (-300,-80) {};
\node [draw, shape=circle] (v4) at (-220,-80) {};

\node [draw, shape=circle] (v5) at (-140,-20) {};
\node [draw, shape=circle] (v6) at (-140,-80) {};

\node [draw, shape=circle] (v7) at (-300,-80) {};
\node [draw, shape=circle] (v8) at (-220,-80) {};
\node [draw, shape=circle] (v9) at (-300,-140) {};
\node [draw, shape=circle] (v10) at (-220,-140) {};

\node [draw, shape=circle] (v12) at (-140,-140) {};

\node [scale=50] at (-290,-30) {1};
\node [scale=50] at (-290,-50) {3};
\node [scale=50] at (-290,-70) {5};

\node [scale=50] at (-270,-30) {1};
\node [scale=50] at (-270,-50) {2};
\node [scale=50] at (-270,-70) {4};

\node [scale=50] at (-250,-30) {1};
\node [scale=50] at (-250,-50) {2};
\node [scale=50] at (-250,-70) {4};

\node [scale=50] at (-230,-30) {1};
\node [scale=50] at (-230,-50) {3};
\node [scale=50] at (-230,-70) {5};

\node [scale=50] at (-210,-30) {1};
\node [scale=50] at (-210,-50) {3};
\node [scale=50] at (-210,-70) {5};

\node [scale=50] at (-190,-30) {1};
\node [scale=50] at (-190,-50) {2};
\node [scale=50] at (-190,-70) {4};

\node [scale=50] at (-170,-30) {1};
\node [scale=50] at (-170,-50) {2};
\node [scale=50] at (-170,-70) {4};

\node [scale=50] at (-150,-30) {1};
\node [scale=50] at (-150,-50) {3};
\node [scale=50] at (-150,-70) {5};

\node [scale=50] at (-290,-90) {1};
\node [scale=50] at (-290,-110) {3};
\node [scale=50] at (-290,-130) {5};

\node [scale=50] at (-270,-90) {1};
\node [scale=50] at (-270,-110) {2};
\node [scale=50] at (-270,-130) {4};

\node [scale=50] at (-250,-90) {1};
\node [scale=50] at (-250,-110) {2};
\node [scale=50] at (-250,-130) {4};

\node [scale=50] at (-230,-90) {1};
\node [scale=50] at (-230,-110) {3};
\node [scale=50] at (-230,-130) {5};

\node [scale=50] at (-210,-90) {1};
\node [scale=50] at (-210,-110) {3};
\node [scale=50] at (-210,-130) {5};

\node [scale=50] at (-190,-90) {1};
\node [scale=50] at (-190,-110) {2};
\node [scale=50] at (-190,-130) {4};

\node [scale=50] at (-170,-90) {1};
\node [scale=50] at (-170,-110) {2};
\node [scale=50] at (-170,-130) {4};

\node [scale=50] at (-150,-90) {1};
\node [scale=50] at (-150,-110) {3};
\node [scale=50] at (-150,-130) {5};

\draw (v2)--(v5)--(v6)--(v4);
\draw (v1)--(v2)--(v4)--(v3)--(v1);
\draw (v8)--(v10)--(v9)--(v3);
\draw (v6)--(v12)--(v10);

\end{tikzpicture}
  \caption{~A $5$-star coloring of $C_6 \times C_8$ obtained using four copies of a coloring of $C_3 \times C_4$}\label{fig-suitable}
\end{figure}

\section{Tensor Product of Two Graphs}\label{S:tensor}
In this section, we give an upper bound on the star chromatic number of the tensor product of two arbitrary graphs. Next, we give exact values of the star chromatic number of tensor product of (a) two paths, (b) two cycles, and (c) a cycle and a path.

Fertin et al.~\cite{fertin2004star}  showed that $\chi_s(G \square H) \leq \chi_s(G) \chi_s(H)$. It is interesting to know an upper bound for the star chromatic number of tensor product of graphs. We observe that $\chi_s(G \times H)$ can be arbitrarily large even if $\chi_s(G)$ and $\chi_s(H)$ are constant.  For example, if $G=K_{1,n_1}$ and $H=K_{1,n_2}$, then $\chi_s(G)=\chi_s(H)=2$. 
Since $G \times H$ contains $K_{{(n_1-1),}{(n_2-1)}}$ as a subgraph, $\chi_s(G \times H) \geq \chi_s(K_{{(n_1-1),}{(n_2-1)}})=\min \{n_1-1,n_2-1\}+1$. 

In the following theorem we give an upper bound for the star chromatic number of tensor product of two arbitrary graphs.
\begin{theorem}
Let $G$ and $H$ be two connected graphs having $n_1$ and $n_2$ vertices respectively. Then we have 
 $\chi_s(G \times H) \leq \min \{n_1 \chi_s(H), n_2 \chi_s(G)\}$. 
\end{theorem}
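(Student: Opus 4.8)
The plan is to prove the single bound $\chi_s(G\times H)\le n_2\,\chi_s(G)$; the companion inequality $\chi_s(G\times H)\le n_1\,\chi_s(H)$ then follows immediately by exchanging the roles of $G$ and $H$, and the two together give the claimed minimum. First I would fix a star coloring $c\colon V(G)\to[\chi_s(G)]$ of $G$ that uses $\chi_s(G)$ colors, label the vertices of $H$ as $v_1,\dots,v_{n_2}$, and define a coloring $\phi$ of $G\times H$ by $\phi(u,v_j)=(c(u),j)$. This uses colors from the set $[\chi_s(G)]\times[n_2]$, hence at most $n_2\,\chi_s(G)$ colors in total. The idea behind the construction is that the first coordinate inherits the star structure of $G$, while the second coordinate $j$ is exactly what guarantees properness in the tensor product.

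Checking that $\phi$ is proper is the easy step: if $(u,v_i)(u',v_j)\in E(G\times H)$ then $v_iv_j\in E(H)$ forces $i\ne j$, so the two colors already differ in their second coordinate. The substance of the argument is ruling out bicolored paths on four vertices. Suppose $a\text{-}b\text{-}c\text{-}d$ is a $P_4$ in $G\times H$ that is bicolored, and write $a=(u_1,w_1),\,b=(u_2,w_2),\,c=(u_3,w_3),\,d=(u_4,w_4)$. Since $\phi$ is proper, being bicolored means the colors alternate, i.e.\ $\phi(a)=\phi(c)$ and $\phi(b)=\phi(d)$ with $\phi(a)\ne\phi(b)$. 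By the definition of $\phi$ this yields $c(u_1)=c(u_3)$ and $c(u_2)=c(u_4)$, together with the coordinate identities $w_1=w_3$ and $w_2=w_4$.

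I would then project the path onto its first coordinate and argue that $u_1u_2u_3u_4$ is a genuine $P_4$ in $G$. This is the step I expect to require the most care, since a priori the projection could collapse onto a triangle or a shorter walk, in which case no contradiction with the star coloring of $G$ would arise. The coordinate identities are what rescue distinctness: since $a\ne c$ but $w_1=w_3$ we get $u_1\ne u_3$, and since $b\ne d$ but $w_2=w_4$ we get $u_2\ne u_4$; the adjacencies in $G$ give $u_1\ne u_2$, $u_2\ne u_3$, $u_3\ne u_4$; and if $u_1=u_4$ then $c(u_1)=c(u_4)=c(u_2)$ would contradict properness of $c$ on the edge $u_1u_2$, so $u_1\ne u_4$ as well. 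Hence all four vertices are distinct, and because consecutive ones are adjacent in $G$, the sequence $u_1u_2u_3u_4$ is a path on four vertices in $G$. It is bicolored under $c$, as $c(u_1)=c(u_3)$ and $c(u_2)=c(u_4)$, contradicting that $c$ is a star coloring of $G$. Therefore $G\times H$ admits no bicolored $P_4$ under $\phi$, so $\phi$ is a star coloring and $\chi_s(G\times H)\le n_2\,\chi_s(G)$; invoking symmetry completes the proof.
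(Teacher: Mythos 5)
Your proposal is correct and uses essentially the same construction as the paper: color each vertex by a pair consisting of a star color on one factor and the raw vertex index on the other, then observe that a bicolored $P_4$ would force the index coordinates to repeat and hence project to a bicolored $P_4$ in the star-colored factor. The only difference is organizational (you argue by contradiction and verify distinctness of the projected vertices explicitly, including the $u_1=u_4$ case, where the paper runs a direct two-case analysis), so the two arguments are interchangeable.
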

\begin{proof}
  Let $V(G)  = \{u_1,u_2,\ldots,u_{n_1}\}$, $V(H) = \{v_1,v_2,\ldots,v_{n_2}\}$ and $V(G\times H) = \{(u_i,v_j) | i\in[n_1],j\in [n_2]\}$.
    Suppose that $\chi_s(G) = k_1$ and $\chi_s(H) = k_2$ and let $f_G:V(G) \rightarrow [k_1]$ and $f_H:V(H) \rightarrow [k_2]$ are  star colorings of $G$ and $H$ respectively. Without loss of generality, assume that $n_1 k_2 < n_2k_1$. Define $g:V(G\times H) \rightarrow [n_1k_2]$ such that $g((u_i,v_j)) = (i,f_H(v_j))$. Clearly $g$ uses $n_1k_2$ colors. 
    
    Consider any two adjacent vertices $(u_i,-)$ and $(u_j,-)$. 
    We have $g((u_i,-))=(i,-) \neq (j,-) =g((u_j,-))$. Therefore $g$ is a proper coloring of $G \times H$.
    
    Consider a path $P$ of length three having the vertices $(u_{i_1}, v_{j_1})$, $(u_{i_2}, v_{j_2})$, $(u_{i_3}, v_{j_3})$ and $(u_{i_4}, v_{j_4})$. If $u_{i_1}=u_{i_3}$ and $u_{i_2}=u_{i_4}$ then $v_{j_1}, v_{j_2}, v_{j_3},v_{j_4}$ forms a $P_4$ in the graph $H$, hence $P$ is colored with at least three distinct colors. 
    If either $u_{i_1} \neq u_{i_3}$ or $u_{i_2} \neq u_{i_4}$ then the set $\{u_{i_1}, u_{i_2}, u_{i_3}, u_{i_4}\}$ contains at least three distinct vertices of $G$, hence $P$ is colored with at least three distinct colors. Therefore, $g$ is a star coloring of $G \times H$.  \qed
\end{proof}

\subsection{Tensor product of two paths}\label{sec-PP}
In this subsection, we study the star coloring of the tensor product of two paths.
\begin{theorem}
For every pair of integers $m$ and $n$, where $2 \leq m \leq n$, we have 

$$
		\chi_s(P_m \times P_n) = 
			\begin{cases} 
				2 &\text{if $m =2$ and  $n \in \{2,3\}$}; \\
				3 &\text{if $m = 2$ and $n \geq 4$}; \\
                3 &\text{if $m = 3$ and $n \geq 3$}; \\
                4 &\text{if $m = 4$ and $n \geq 4$}; \\
                4 &\text{if $m = 5$ and $n \geq 5$}; \\
                4 &\text{if $m = 6$ and $n \in \{6,7\}$}; \\
                5 &\text{if $m = 6$ and $n \geq 8$}; \\
				5 &\text{if $m \geq 7$ and $ n \geq 7$.} \\
			\end{cases}
	$$	

\end{theorem}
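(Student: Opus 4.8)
The plan is to prove the eight cases by matching an explicit upper-bound coloring against a lower bound coming from a forbidden subgraph, and the whole argument is organized around one structural observation about the diagonal adjacency of the tensor product. Since $(u_i,v_j)$ and $(u_{i'},v_{j'})$ are adjacent in $P_m\times P_n$ exactly when $|i-i'|=|j-j'|=1$, the parity of $i+j$ is invariant along edges, so the graph splits into (at most) two parts and $\chi_s(P_m\times P_n)$ is the maximum of the star chromatic numbers of these parts. Applying the change of coordinates $(i,j)\mapsto\bigl(\tfrac{i+j}{2},\tfrac{i-j}{2}\bigr)$ turns each part into an induced subgraph of the integer grid, since a unit diagonal step becomes a unit horizontal or vertical step. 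Two consequences drive the proof. First, every part embeds in a finite grid $P_a\square P_b$, so $\chi_s(P_m\times P_n)\le 5$ for all $m,n$ by Lemma~\ref{lem-1} and Lemma~\ref{lem-subgraph}; this already settles the upper bound in every value-$5$ case. Second, a $c\times d$ axis-aligned block of lattice points in the new coordinates is an induced copy of $P_c\square P_d$, and since such a block spans $c+d-2$ consecutive values of both $i$ and $j$, it sits inside $P_m\times P_n$ precisely when $c+d-2\le m-1$ and $c+d-2\le n-1$; as $m\le n$ this reduces to $c+d\le m+1$. This embedding criterion is the engine for all lower bounds, and I note that $P_m\times P_n$ is bipartite, so odd cycles such as $C_5$ are unavailable and only even grids may be used.

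Next I would dispose of the small-$m$ rows. For $m=2$ the diagonal edges chain the $2n$ vertices into a disjoint union of two paths, so the path lemma gives $\chi_s=2$ for $n\in\{2,3\}$ and $\chi_s=3$ for $n\ge4$. For $m=3$ the graph contains a $4$-cycle, hence a $P_4$, forcing $\chi_s\ge3$; for the matching upper bound I would exhibit a single $3$-coloring pattern, periodic in the column index, and check that it is proper and leaves no bicolored $P_4$. For the value-$4$ block $m\in\{4,5\}$ together with $m=6,\ n\in\{6,7\}$, the lower bound $\chi_s\ge4$ follows from the embedding criterion: $P_2\square P_3$ (for $m=4$), $P_3\square P_3$ (for $m=5$) and $P_3\square P_4$ (for $m=6$) all satisfy $c+d\le m+1$, all fit on the $n$-side in the stated ranges, and each has star chromatic number $4$ by Lemma~\ref{lem-1}. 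The upper bound $\chi_s\le4$ again requires an explicit periodic pattern, one per value of $m$, together with the verification that it four-star-colors the whole diamond for every admissible $n$.

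For the two value-$5$ rows the lower bounds come from two different subgraphs, which is exactly what the width threshold detects. When $m,n\ge7$ the criterion lets me place a $4\times4$ block, i.e.\ an induced $P_4\square P_4$ with $\chi_s=5$ (Lemma~\ref{lem-1}), giving $\chi_s\ge5$; combined with the universal upper bound $\le5$ this yields equality. When $m=6$ the $4\times4$ block no longer fits (its span $6$ exceeds $m-1=5$), so instead I would embed the $Z$-graph or the $Y$-graph of Figure~\ref{fig:zy-graph}, each of which has $\chi_s=5$ and first fits inside the width-$6$ diamond once the column length reaches $n=8$; this gives $\chi_s\ge5$ for $m=6,\ n\ge8$ and, with the $\le5$ bound, equality. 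The same two subgraphs explain why the value stays at $4$ for $m=5$ and for $m=6,\ n\le7$: there they simply do not embed.

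The routine part is the lower bounds, which collapse to the single embedding inequality $c+d\le m+1$ once the rotation is in place. The main obstacle is the collection of explicit upper-bound patterns for the value-$3$ and value-$4$ regimes, because the grid-subgraph argument only delivers the bound $5$ and the finer bounds must be built by hand. The delicate requirements are that each pattern be genuinely periodic in the long direction so that it serves all $n$ beyond the threshold, and that $P_4$-freeness be verified not only in the interior but also across a period boundary and along the two slanted sides of the diamond, where the diagonal $P_4$'s behave differently from those in a full grid. Getting these constructions to survive as $n\to\infty$ for $m=5$, while knowing that the $Z/Y$ obstruction switches the answer to $5$ at exactly $n=8$ for $m=6$, is the crux that fixes all the thresholds.
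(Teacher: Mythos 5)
Your proposal is correct in outline and, for the lower bounds, lands on essentially the same witnesses the paper uses ($P_2\square P_3$ for the value-$4$ rows, $P_4\square P_4$ for $m,n\ge 7$, the $Z$/$Y$ graphs for $m=6$, $n\ge 8$, all fed through Lemmas~\ref{lem-subgraph} and~\ref{lem-1}); what you add is a unifying frame the paper leaves implicit, namely the parity split plus the $45^\circ$ rotation turning each component into an induced subgraph of the integer grid, with the single inequality $c+d\le m+1$ governing which $P_c\square P_d$ embeds. That frame buys you one genuine improvement: the universal bound $\chi_s(P_m\times P_n)\le 5$ falls out of ``each component sits inside some $P_a\square P_b$'' and Lemma~\ref{lem-1}, whereas the paper instead exhibits explicit $5$-colorings for the $m=6,\ n\ge 8$ and $m,n\ge 7$ cases in Fig.~\ref{PP}; your route eliminates those two constructions and their verification. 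Two caveats. First, your embedding criterion is stated as ``precisely when,'' but there is a parity wrinkle: for given $c,d,m,n$ the axis-aligned block may be placeable in only one of the two parity classes (e.g.\ $P_4\square P_4$ in $P_7\times P_7$ lives in the odd class only), so the criterion should be phrased as sufficiency with the class chosen to match parities --- this affects none of the instances you invoke, but it is worth saying. Second, and more substantively, the clean part of your argument delivers only the bound $5$ and the lower bounds; the equalities $\chi_s=3$ for $m=3$ and $\chi_s=4$ for $m\in\{4,5\}$ and $m=6,\ n\le 7$ still rest entirely on the explicit periodic patterns you defer to ``by hand,'' which is exactly the content of Fig.~\ref{PP} in the paper. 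Until those patterns are written down and checked (including across period boundaries, as you note), those cases are a promissory note rather than a proof, so the proposal as it stands is a correct plan with the same residual case-work as the paper, not a shortcut around it.
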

\begin{proof}
\textbf{Case 1.} $m=2, n \in \{2,3\}$\\
The graphs $P_2 \times P_2$ and $P_2 \times P_3$ are disjoint union of two $P_2$'s and two $P_3$'s respectively. Hence  $\chi_s(P_2 \times P_2)= \chi_s(P_2 \times P_3)=2$.

\textbf{Case 2.} $m=2, n \geq 4$\\
 The graph $P_2 \times P_n$ is a disjoint union of two $P_n$'s. Hence $\chi_s(P_2\times P_n)=3$, for $n \geq 4$.

 \textbf{Case 3(a).} $m=3, n =3$\\
 The graph $P_3 \times P_3$ is a disjoint union of two components $C_4$ and $K_{1,4}$. As $\chi_s(C_4)=3$ and $\chi_s(K_{1,4})=2$, we have $\chi_s(P_3\times P_3)=3$.

 \textbf{Case 3(b).} $m=3, n \geq 4$\\
 The  graph $P_3 \times P_n$, for $n \geq 4$ contains two connected components. Both components contain $C_4$ as a subgraph, hence from Lemma \ref{lem-subgraph} and \ref{lem-c}, we have $\chi_s(P_3\times P_n)\geq \chi_s(C_4) = 3$ for $n \geq 4$. Also, we have shown a $3$-star coloring of $P_3 \times P_n$ in the Fig.~\ref{PP}, thus $\chi_s(P_3\times P_n)\leq 3$. Altogether, we have $\chi_s(P_3\times P_n)=3$, for $n \geq 4$. 
 
 \textbf{Case 4.} $m =4, n \geq 4$, \\
 For $n \geq 4$, the graph $P_4 \times P_n$ contains $P_2 \square P_3$ as a subgraph, hence from Lemma \ref{lem-subgraph} and \ref{lem-1}, we have $\chi_s(P_4 \times P_n) \geq \chi_s(P_2 \square P_3)=4$. 
 A $4$-star coloring of $P_4 \times P_n$, for $n \geq 4$ is shown in the Fig.~\ref{PP}. Hence $\chi_s(P_4 \times P_n) = 4$.

 \textbf{Case 5.} $m =5, n \geq 5$, \\
 For $n \geq 5$, the graph $P_5 \times P_n$ contains $P_2 \square P_3$ as a subgraph, hence from Lemma \ref{lem-subgraph} and \ref{lem-1}, we have $\chi_s(P_5 \times P_n) \geq \chi_s(P_2 \square P_3)=4$. 
 A $4$-star coloring of $P_5 \times P_n$, for $n \geq 5$ is shown in the Fig.~\ref{PP}. Hence $\chi_s(P_5 \times P_n) = 4$.

 \textbf{Case 6.} $m =6, n \in \{6,7\}$, \\
 For $n \in \{6,7\}$, the graph $P_6 \times P_n$ contains $P_2 \square P_3$ as a subgraph, hence from Lemma \ref{lem-subgraph} and \ref{lem-1}, we have $\chi_s(P_6 \times P_n) \geq \chi_s(P_2 \square P_3)=4$. 
 A $4$-star coloring of $P_6 \times P_n$, for  $n \in \{6,7\}$ is shown in  Fig.~\ref{PP}. Hence $\chi_s(P_6 \times P_n) = 4$ for $n \in \{6,7\}$.

 \textbf{Case 7.} $m =6, n \geq 8$, \\
 For $n \geq 8$, the graph $P_6 \times P_n$ contains $Z$ as a subgraph, hence  from Lemma \ref{lem-subgraph}, we have $\chi_s(P_6 \times P_n) \geq \chi_s(Z)=5$. 
 A $5$-star coloring of $P_6 \times P_n$, for $n \geq 8$ is shown in the Fig.~\ref{PP}. Hence $\chi_s(P_6 \times P_n) = 5$ for $n \geq 8$.

 \textbf{Case 8.} $m \geq 7, n\geq 7$\\
The graph $P_m\times P_n$ contains $P_4 \square P_4$ as a subgraph, hence from Lemma \ref{lem-subgraph} and \ref{lem-1}, we have $\chi_s(P_m\times P_n)\geq \chi_s(P_4\square P_4)= 5$. A $5$-star coloring of $P_m \times P_n$, for $m \geq 7, n \geq 7$ is shown in Fig.~\ref{PP}. Hence $\chi_s(P_m \times P_n) = 5$ for $m \geq 7, n \geq 7$.  \qed
\end{proof}

\begin{figure}[ht]
 \centering
\begin{tikzpicture}[scale=.02, transform shape]
\node [draw, shape=circle] (v1) at (-300,0) {};
\node [draw, shape=circle] (v2) at (-150,0) {};
\node [draw, shape=circle] (v3) at (-300,-60) {};
\node [draw, shape=circle] (v4) at (-150,-60) {};

\node [scale=50] at (-290,-10) {1};
\node [scale=50] at (-290,-30) {1};
\node [scale=50] at (-290,-50) {1};

\node [scale=50] at (-270,-10) {2};
\node [scale=50] at (-270,-30) {2};
\node [scale=50] at (-270,-50) {2};

\node [scale=50] at (-250,-10) {3};
\node [scale=50] at (-250,-30) {3};
\node [scale=50] at (-250,-50) {3};

\node [scale=50] at (-230,-10) {1};
\node [scale=50] at (-230,-30) {1};
\node [scale=50] at (-230,-50) {1};

\node [scale=50] at (-210,-10) {2};
\node [scale=50] at (-210,-30) {2};
\node [scale=50] at (-210,-50) {2};

\node [scale=50] at (-190,-10) {3};
\node [scale=50] at (-190,-30) {3};
\node [scale=50] at (-190,-50) {3};

\node [scale=50] at (-170,-10) {$\ldots$};
\node [scale=50] at (-170,-30) {$\ldots$};
\node [scale=50] at (-170,-50) {$\ldots$};

\draw (v1)--(v2)--(v4)--(v3)--(v1);
\node [scale=50] at (-260,15) {$P_3 \times P_n$};


\node [draw, shape=circle] (u1) at (-140,0) {};
\node [draw, shape=circle] (u2) at (10,0) {};
\node [draw, shape=circle] (u3) at (-140,-80) {};
\node [draw, shape=circle] (u4) at (10,-80) {};

\node [scale=50] at (-130,-10) {1};
\node [scale=50] at (-130,-30) {1};
\node [scale=50] at (-130,-50) {1};
\node [scale=50] at (-130,-70) {1};

\node [scale=50] at (-110,-10) {2};
\node [scale=50] at (-110,-30) {3};
\node [scale=50] at (-110,-50) {3};
\node [scale=50] at (-110,-70) {2};

\node [scale=50] at (-90,-10) {2};
\node [scale=50] at (-90,-30) {4};
\node [scale=50] at (-90,-50) {4};
\node [scale=50] at (-90,-70) {2};

\node [scale=50] at (-70,-10) {1};
\node [scale=50] at (-70,-30) {1};
\node [scale=50] at (-70,-50) {1};
\node [scale=50] at (-70,-70) {1};

\node [scale=50] at (-50,-10) {2};
\node [scale=50] at (-50,-30) {3};
\node [scale=50] at (-50,-50) {3};
\node [scale=50] at (-50,-70) {2};

\node [scale=50] at (-30,-10) {2};
\node [scale=50] at (-30,-30) {4};
\node [scale=50] at (-30,-50) {4};
\node [scale=50] at (-30,-70) {2};

\node [scale=50] at (-10,-10) {$\ldots$};
\node [scale=50] at (-10,-30) {$\ldots$};
\node [scale=50] at (-10,-50) {$\ldots$};
\node [scale=50] at (-10,-70) {$\ldots$};

\draw (u1)--(u2)--(u4)--(u3)--(u1);
\node [scale=50] at (-90,15) {$P_4 \times P_n$};


\node [draw, shape=circle] (u1) at (20,0) {};
\node [draw, shape=circle] (u2) at (170,0) {};
\node [draw, shape=circle] (u3) at (20,-100) {};
\node [draw, shape=circle] (u4) at (170,-100) {};

\node [scale=50] at (30,-10) {4};
\node [scale=50] at (30,-30) {1};
\node [scale=50] at (30,-50) {1};
\node [scale=50] at (30,-70) {1};
\node [scale=50] at (30,-90) {4};

\node [scale=50] at (50,-10) {4};
\node [scale=50] at (50,-30) {2};
\node [scale=50] at (50,-50) {2};
\node [scale=50] at (50,-70) {2};
\node [scale=50] at (50,-90) {4};

\node [scale=50] at (70,-10) {4};
\node [scale=50] at (70,-30) {3};
\node [scale=50] at (70,-50) {3};
\node [scale=50] at (70,-70) {3};
\node [scale=50] at (70,-90) {4};

\node [scale=50] at (90,-10) {4};
\node [scale=50] at (90,-30) {1};
\node [scale=50] at (90,-50) {1};
\node [scale=50] at (90,-70) {1};
\node [scale=50] at (90,-90) {4};

\node [scale=50] at (110,-10) {4};
\node [scale=50] at (110,-30) {2};
\node [scale=50] at (110,-50) {2};
\node [scale=50] at (110,-70) {2};
\node [scale=50] at (110,-90) {4};

\node [scale=50] at (130,-10) {4};
\node [scale=50] at (130,-30) {3};
\node [scale=50] at (130,-50) {3};
\node [scale=50] at (130,-70) {3};
\node [scale=50] at (130,-90) {4};

\node [scale=50] at (150,-10) {$\ldots$};
\node [scale=50] at (150,-30) {$\ldots$};
\node [scale=50] at (150,-50) {$\ldots$};
\node [scale=50] at (150,-70) {$\ldots$};
\node [scale=50] at (150,-90) {$\ldots$};

\draw (u1)--(u2)--(u4)--(u3)--(u1);
\node [scale=50] at (110,15) {$P_5 \times P_n$};



\node [draw, shape=circle] (u1) at (-300,-260) {};
\node [draw, shape=circle] (u2) at (-180,-260) {};
\node [draw, shape=circle] (u3) at (-300,-140) {};
\node [draw, shape=circle] (u4) at (-180,-140) {};

\node [scale=50] at (-290,-150) {2};
\node [scale=50] at (-290,-170) {1};
\node [scale=50] at (-290,-190) {2};
\node [scale=50] at (-290,-210) {2};
\node [scale=50] at (-290,-230) {1};
\node [scale=50] at (-290,-250) {2};

\node [scale=50] at (-270,-150) {3};
\node [scale=50] at (-270,-170) {1};
\node [scale=50] at (-270,-190) {4};
\node [scale=50] at (-270,-210) {4};
\node [scale=50] at (-270,-230) {1};
\node [scale=50] at (-270,-250) {3};

\node [scale=50] at (-250,-150) {2};
\node [scale=50] at (-250,-170) {2};
\node [scale=50] at (-250,-190) {3};
\node [scale=50] at (-250,-210) {3};
\node [scale=50] at (-250,-230) {2};
\node [scale=50] at (-250,-250) {2};

\node [scale=50] at (-230,-150) {3};
\node [scale=50] at (-230,-170) {4};
\node [scale=50] at (-230,-190) {1};
\node [scale=50] at (-230,-210) {1};
\node [scale=50] at (-230,-230) {4};
\node [scale=50] at (-230,-250) {3};

\node [scale=50] at (-210,-150) {1};
\node [scale=50] at (-210,-170) {4};
\node [scale=50] at (-210,-190) {2};
\node [scale=50] at (-210,-210) {2};
\node [scale=50] at (-210,-230) {4};
\node [scale=50] at (-210,-250) {1};

\node [scale=50] at (-190,-150) {3};
\node [scale=50] at (-190,-170) {3};
\node [scale=50] at (-190,-190) {3};
\node [scale=50] at (-190,-210) {3};
\node [scale=50] at (-190,-230) {3};
\node [scale=50] at (-190,-250) {3};

\draw (u1)--(u2)--(u4)--(u3)--(u1);
\node [scale=50] at (-240,-125) {$P_6 \times P_{6}$};


\node [draw, shape=circle] (u1) at (-160,-260) {};
\node [draw, shape=circle] (u2) at (-20,-260) {};
\node [draw, shape=circle] (u3) at (-160,-140) {};
\node [draw, shape=circle] (u4) at (-20,-140) {};

\node [scale=50] at (-150,-150) {3};
\node [scale=50] at (-150,-170) {3};
\node [scale=50] at (-150,-190) {3};
\node [scale=50] at (-150,-210) {3};
\node [scale=50] at (-150,-230) {3};
\node [scale=50] at (-150,-250) {3};

\node [scale=50] at (-130,-150) {1};
\node [scale=50] at (-130,-170) {4};
\node [scale=50] at (-130,-190) {2};
\node [scale=50] at (-130,-210) {2};
\node [scale=50] at (-130,-230) {4};
\node [scale=50] at (-130,-250) {1};

\node [scale=50] at (-110,-150) {3};
\node [scale=50] at (-110,-170) {4};
\node [scale=50] at (-110,-190) {1};
\node [scale=50] at (-110,-210) {1};
\node [scale=50] at (-110,-230) {4};
\node [scale=50] at (-110,-250) {3};

\node [scale=50] at (-90,-150) {2};
\node [scale=50] at (-90,-170) {2};
\node [scale=50] at (-90,-190) {3};
\node [scale=50] at (-90,-210) {3};
\node [scale=50] at (-90,-230) {2};
\node [scale=50] at (-90,-250) {2};

\node [scale=50] at (-70,-150) {3};
\node [scale=50] at (-70,-170) {1};
\node [scale=50] at (-70,-190) {4};
\node [scale=50] at (-70,-210) {4};
\node [scale=50] at (-70,-230) {1};
\node [scale=50] at (-70,-250) {3};

\node [scale=50] at (-50,-150) {4};
\node [scale=50] at (-50,-170) {1};
\node [scale=50] at (-50,-190) {2};
\node [scale=50] at (-50,-210) {2};
\node [scale=50] at (-50,-230) {1};
\node [scale=50] at (-50,-250) {4};

\node [scale=50] at (-30,-150) {3};
\node [scale=50] at (-30,-170) {3};
\node [scale=50] at (-30,-190) {3};
\node [scale=50] at (-30,-210) {3};
\node [scale=50] at (-30,-230) {3};
\node [scale=50] at (-30,-250) {3};

\draw (u1)--(u2)--(u4)--(u3)--(u1);
\node [scale=50] at (-90,-125) {$P_6 \times P_{7}$};


\node [draw, shape=circle] (u1) at (-300,-360) {};
\node [draw, shape=circle] (u2) at (-90,-360) {};
\node [draw, shape=circle] (u3) at (-300,-490) {};
\node [draw, shape=circle] (u4) at (-90,-490) {};

\node [scale=50] at (-290,-370) {1};
\node [scale=50] at (-290,-390) {1};
\node [scale=50] at (-290,-410) {1};
\node [scale=50] at (-290,-430) {1};
\node [scale=50] at (-290,-450) {1};
\node [scale=50] at (-290,-470) {1};

\node [scale=50] at (-270,-370) {2};
\node [scale=50] at (-270,-390) {2};
\node [scale=50] at (-270,-410) {3};
\node [scale=50] at (-270,-430) {3};
\node [scale=50] at (-270,-450) {2};
\node [scale=50] at (-270,-470) {2};

\node [scale=50] at (-250,-370) {4};
\node [scale=50] at (-250,-390) {4};
\node [scale=50] at (-250,-410) {5};
\node [scale=50] at (-250,-430) {5};
\node [scale=50] at (-250,-450) {4};
\node [scale=50] at (-250,-470) {4};

\node [scale=50] at (-230,-370) {1};
\node [scale=50] at (-230,-390) {1};
\node [scale=50] at (-230,-410) {1};
\node [scale=50] at (-230,-430) {1};
\node [scale=50] at (-230,-450) {1};
\node [scale=50] at (-230,-470) {1};

\node [scale=50] at (-210,-370) {2};
\node [scale=50] at (-210,-390) {2};
\node [scale=50] at (-210,-410) {3};
\node [scale=50] at (-210,-430) {3};
\node [scale=50] at (-210,-450) {2};
\node [scale=50] at (-210,-470) {2};

\node [scale=50] at (-190,-370) {4};
\node [scale=50] at (-190,-390) {4};
\node [scale=50] at (-190,-410) {5};
\node [scale=50] at (-190,-430) {5};
\node [scale=50] at (-190,-450) {4};
\node [scale=50] at (-190,-470) {4};

\node [scale=50] at (-170,-370) {1};
\node [scale=50] at (-170,-390) {1};
\node [scale=50] at (-170,-410) {1};
\node [scale=50] at (-170,-430) {1};
\node [scale=50] at (-170,-450) {1};
\node [scale=50] at (-170,-470) {1};

\node [scale=50] at (-150,-370) {2};
\node [scale=50] at (-150,-390) {2};
\node [scale=50] at (-150,-410) {3};
\node [scale=50] at (-150,-430) {3};
\node [scale=50] at (-150,-450) {2};
\node [scale=50] at (-150,-470) {2};

\node [scale=50] at (-130,-370) {4};
\node [scale=50] at (-130,-390) {4};
\node [scale=50] at (-130,-410) {5};
\node [scale=50] at (-130,-430) {5};
\node [scale=50] at (-130,-450) {4};
\node [scale=50] at (-130,-470) {4};

\node [scale=50] at (-110,-370) {\ldots};
\node [scale=50] at (-110,-390) {\ldots};
\node [scale=50] at (-110,-410) {\ldots};
\node [scale=50] at (-110,-430) {\ldots};
\node [scale=50] at (-110,-450) {\ldots};
\node [scale=50] at (-110,-470) {\ldots};

\draw (u1)--(u2)--(u4)--(u3)--(u1);
\node [scale=50] at (-210,-345) {$P_6 \times P_{n}$, $n \geq 8$};


\node [draw, shape=circle] (u1) at (-60,-300) {};
\node [draw, shape=circle] (u2) at (150,-300) {};
\node [draw, shape=circle] (u3) at (-60,-490) {};
\node [draw, shape=circle] (u4) at (150,-490) {};

\node [scale=50] at (-50,-310) {1};
\node [scale=50] at (-50,-330) {1};
\node [scale=50] at (-50,-350) {1};
\node [scale=50] at (-50,-370) {1};
\node [scale=50] at (-50,-390) {1};
\node [scale=50] at (-50,-410) {1};
\node [scale=50] at (-50,-430) {1};
\node [scale=50] at (-50,-450) {1};
\node [scale=50] at (-50,-470) {\vdots};

\node [scale=50] at (-30,-310) {2};
\node [scale=50] at (-30,-330) {2};
\node [scale=50] at (-30,-350) {3};
\node [scale=50] at (-30,-370) {3};
\node [scale=50] at (-30,-390) {2};
\node [scale=50] at (-30,-410) {2};
\node [scale=50] at (-30,-430) {3};
\node [scale=50] at (-30,-450) {3};
\node [scale=50] at (-30,-470) {\vdots};

\node [scale=50] at (-10,-310) {4};
\node [scale=50] at (-10,-330) {4};
\node [scale=50] at (-10,-350) {5};
\node [scale=50] at (-10,-370) {5};
\node [scale=50] at (-10,-390) {4};
\node [scale=50] at (-10,-410) {4};
\node [scale=50] at (-10,-430) {5};
\node [scale=50] at (-10,-450) {5};
\node [scale=50] at (-10,-470) {\vdots};

\node [scale=50] at (10,-310) {1};
\node [scale=50] at (10,-330) {1};
\node [scale=50] at (10,-350) {1};
\node [scale=50] at (10,-370) {1};
\node [scale=50] at (10,-390) {1};
\node [scale=50] at (10,-410) {1};
\node [scale=50] at (10,-430) {1};
\node [scale=50] at (10,-450) {1};
\node [scale=50] at (10,-470) {\vdots};

\node [scale=50] at (30,-310) {2};
\node [scale=50] at (30,-330) {2};
\node [scale=50] at (30,-350) {3};
\node [scale=50] at (30,-370) {3};
\node [scale=50] at (30,-390) {2};
\node [scale=50] at (30,-410) {2};
\node [scale=50] at (30,-430) {3};
\node [scale=50] at (30,-450) {3};
\node [scale=50] at (30,-470) {\vdots};

\node [scale=50] at (50,-310) {4};
\node [scale=50] at (50,-330) {4};
\node [scale=50] at (50,-350) {5};
\node [scale=50] at (50,-370) {5};
\node [scale=50] at (50,-390) {4};
\node [scale=50] at (50,-410) {4};
\node [scale=50] at (50,-430) {5};
\node [scale=50] at (50,-450) {5};
\node [scale=50] at (50,-470) {\vdots};

\node [scale=50] at (70,-310) {1};
\node [scale=50] at (70,-330) {1};
\node [scale=50] at (70,-350) {1};
\node [scale=50] at (70,-370) {1};
\node [scale=50] at (70,-390) {1};
\node [scale=50] at (70,-410) {1};
\node [scale=50] at (70,-430) {1};
\node [scale=50] at (70,-450) {1};
\node [scale=50] at (70,-470) {\vdots};

\node [scale=50] at (90,-310) {2};
\node [scale=50] at (90,-330) {2};
\node [scale=50] at (90,-350) {3};
\node [scale=50] at (90,-370) {3};
\node [scale=50] at (90,-390) {2};
\node [scale=50] at (90,-410) {2};
\node [scale=50] at (90,-430) {3};
\node [scale=50] at (90,-450) {3};
\node [scale=50] at (90,-470) {\vdots};

\node [scale=50] at (110,-310) {4};
\node [scale=50] at (110,-330) {4};
\node [scale=50] at (110,-350) {5};
\node [scale=50] at (110,-370) {5};
\node [scale=50] at (110,-390) {4};
\node [scale=50] at (110,-410) {4};
\node [scale=50] at (110,-430) {5};
\node [scale=50] at (110,-450) {5};
\node [scale=50] at (110,-470) {\vdots};

\node [scale=50] at (130,-310) {\ldots};
\node [scale=50] at (130,-330) {\ldots};
\node [scale=50] at (130,-350) {\ldots};
\node [scale=50] at (130,-370) {\ldots};
\node [scale=50] at (130,-390) {\ldots};
\node [scale=50] at (130,-410) {\ldots};
\node [scale=50] at (130,-430) {\ldots};
\node [scale=50] at (130,-450) {\ldots};

\draw (u1)--(u2)--(u4)--(u3)--(u1);
\node [scale=50] at (50,-285) {$P_m \times P_{n}$, $m,n \geq 7$};

\end{tikzpicture}
  \caption{~Star colorings $P_m \times P_n$ for various values of $m$ and $n$.}\label{PP}
\end{figure}
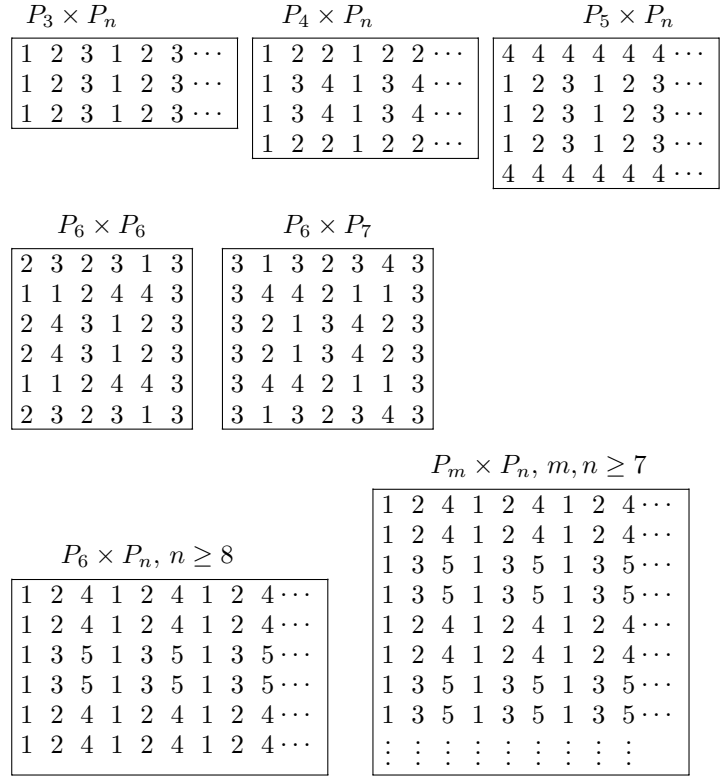


\subsection{Tensor product of two cycles}\label{sec-CC}
In this subsection, we study the star coloring  of the tensor product of two cycles. In particular, we prove the following theorem.

\begin{theorem}\label{th-CC}
    For every pair of positive integers $m$ and $n$, where $3 \leq m \leq n$, we have 
    \[ \chi_s(C_m \times C_n)=
    \begin{cases}
       6, & \text{if $m=3$, $n\in \{3,5\}$};\\
       5, & \text{otherwise}.
    \end{cases}
    \]
\end{theorem}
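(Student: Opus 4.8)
The plan is to prove the two directions separately, keeping $C_3\times C_3$ and $C_3\times C_5$ aside as exceptions. For the \emph{lower bounds}, I would first note that deleting one edge from each factor exhibits $P_m\times P_n$ as a subgraph of $C_m\times C_n$ (tensor product is monotone under taking subgraphs), so Lemma~\ref{lem-subgraph} together with the theorem on tensor products of paths proved above gives $\chi_s(C_m\times C_n)\ge\chi_s(P_m\times P_n)=5$ whenever $m,n\ge 7$. For smaller $m$ or $n$ the path subgraph is too weak, and here I would use the wraparound edges of the cycles: I would locate an explicit copy of the graph $Z$ or the graph $Y$ of Figure~\ref{fig:zy-graph} (for which $\chi_s(Z)=\chi_s(Y)=5$) inside $C_m\times P_{n_0}$ for a small $n_0$. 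Since $P_{n_0}\subseteq P_n\subseteq C_n$, such a copy persists for all $n\ge n_0$, so only finitely many small pairs $(m,n)$ with $\min\{m,n\}\le 6$ remain, and these I would check individually. This establishes $\chi_s(C_m\times C_n)\ge 5$ for every $3\le m\le n$.

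For the two exceptional graphs I would argue $\chi_s\ge 6$ directly. Both $C_3\times C_3$ (on $9$ vertices) and $C_3\times C_5$ (on $15$ vertices) are small and vertex-transitive, so I may fix the color of one vertex without loss of generality, then propagate the proper-coloring and no-bicolored-$P_4$ constraints through the abundant short cycles of these graphs and derive a contradiction from assuming only five colors. A short but exhaustive case analysis (or a computer check) closes this, and together with the explicit $6$-colorings supplied for the upper bound it pins their value at exactly $6$.

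For the \emph{upper bounds} I would produce explicit periodic colorings displayed as patterns, in the style of Figure~\ref{fig-suitable}. The key reduction is the Remark: a $5$-star coloring of $C_a\times C_b$ tiles to a $5$-star coloring of $C_{pa}\times C_{qb}$ for all $p,q\ge 1$. Because each edge of the tensor product changes the column index by exactly one, any $P_4$ spans at most four consecutive columns, so the star condition is \emph{local} in the columns; this lets me build a coloring of $C_m\times C_n$ by concatenating column-blocks of widths $2$ and $3$, and since every integer $n\ge 2$ is a nonnegative combination of $2$ and $3$, a fixed finite list of admissible block-types realizes every column length. I would organize the argument by the small values of $m$, mirroring the case split used for $P_m\times P_n$: one base pattern for each of $m=3,4,5,6$ and a single generic pattern for $m\ge 7$, with tiling and block-concatenation extending each to all larger $n$. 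The two exceptional pairs receive separate explicit $6$-colorings.

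The hard part will be the upper-bound bookkeeping: ensuring that no bicolored $P_4$ is created \emph{across} a block boundary or \emph{around} the cyclic wraparound, where the periodic pattern must close up consistently for every residue class of $m$ and of $n$. A secondary difficulty is the $\ge 6$ lower bound for $C_3\times C_3$ and $C_3\times C_5$: these graphs are too small to contain a rigid $5$-chromatic subgraph such as $Z$ or $Y$, so there is no clean subgraph certificate and a dedicated finite case analysis is unavoidable.
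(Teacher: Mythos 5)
Your overall architecture (subgraph certificates for the lower bound, explicit periodic patterns concatenated via a numerical-semigroup argument for the upper bound, and a dedicated finite analysis for $C_3\times C_3$ and $C_3\times C_5$) matches the paper, but both of your key technical devices diverge from the paper's, and each divergence opens a gap. For the lower bound when $\min\{m,n\}\le 6$, you propose to embed the graph $Z$ or $Y$ of Fig.~\ref{fig:zy-graph} into $C_m\times P_{n_0}$. This is unsupported and is likely to fail precisely for $m\in\{3,5\}$: the paper's own later lemmas establish $\chi_s(C_3\times P_n)\ge 5$ by a uniqueness-of-coloring propagation argument and $\chi_s(C_5\times P_4)=5$ by brute-force case analysis, which strongly suggests neither $Z$ nor $Y$ embeds there; and if no such certificate exists for $m=3$, you are left with infinitely many pairs $(3,n)$ to ``check individually,'' which is not a proof. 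The paper avoids this entirely with a uniform argument (Lemma~\ref{th-atleast5}): if $\chi_s(C_m\times C_n)\le 4$, tiling the pattern $3\times 3$ times would give a $4$-star coloring of $C_{3m}\times C_{3n}$, which contains $P_4\square P_4$ with star chromatic number $5$ by Lemmas~\ref{lem-subgraph} and~\ref{lem-1} --- a contradiction covering all $m,n\ge 3$ at once.

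For the upper bound, your proposal contains no explicit colorings, and the colorings are the actual mathematical content here; a plan to ``produce periodic patterns'' does not establish $\chi_s\le 5$. Moreover, your choice of column-blocks of widths $2$ and $3$ is structurally worse than the paper's: a $P_4$ in the tensor product spans up to four consecutive columns, so with width-$2$ blocks a single $P_4$ can straddle three blocks, and correctness must be verified for every ordered triple of block types (plus the cyclic wraparound) --- the locality you invoke does not reduce to pairwise compatibility. The paper instead uses blocks of widths $4$ and $5$ (and $7$, $9$, $11$ for $m=3$) whose first three columns are \emph{identical} (Lemma~\ref{thm-sylvester}, Figs.~\ref{fig-C3xCn}--\ref{C7}), so any $P_4$ crossing a boundary sees one full block followed by a fixed three-column prefix independent of which block comes next; this reduces the verification to the two base patterns. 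For $m,n\ge 12$ the paper additionally needs a two-dimensional version with four block types whose boundary rows and columns agree (Fig.~\ref{fig:block}), a complication your column-only scheme does not address for simultaneously large $m$ and $n$ not divisible by any small base case.
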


The proof of Theorem \ref{th-CC} follows from the following lemmas. 

\begin{lemma}\label{th-atleast5}
 For every pair of positive integers $m ,n \geq 3$, we have $\chi_s(C_m \times C_n) \geq 5$.
\end{lemma}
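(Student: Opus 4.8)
The plan is to assume, for contradiction, that $C_m \times C_n$ admits a $4$-star coloring $c$ and to force a bicolored $P_4$. First I would fix convenient coordinates: write $V(C_m\times C_n)=\{(i,j): i\in\mathbb{Z}_m,\ j\in\mathbb{Z}_n\}$, so that the neighbors of $(i,j)$ are exactly the four ``diagonal'' vertices $(i\pm 1, j\pm 1)$. Two features of this structure drive the argument. The graph is $4$-regular and riddled with $4$-cycles: for every vertex $(i,j)$, the two neighbors $(i+1,j+1)$ and $(i+1,j-1)$ share the common neighbor $(i+2,j)$, so $(i,j),(i+1,j+1),(i+2,j),(i+1,j-1)$ is a $C_4$, and similarly around each of the four quadrants of $(i,j)$. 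Since $\chi_s(C_4)=3$ by Lemma~\ref{lem-c}, no $4$-cycle may be properly $2$-colored; equivalently, on each such $C_4$ the two diagonals are never both monochromatic. This already forbids many local color patterns, but by itself only yields $\chi_s\ge 3$.

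To upgrade $\ge 3$ to $\ge 5$ I would analyze the closed second neighborhood of a fixed vertex $v=(i,j)$. Color $v$ with $1$; its four neighbors $A=(i+1,j+1)$, $B=(i+1,j-1)$, $C=(i-1,j-1)$, $D=(i-1,j+1)$ avoid color $1$ and hence use only the three colors $\{2,3,4\}$, so by pigeonhole at least two of them coincide. I would then propagate constraints outward to the eight second-neighborhood vertices $(i,j\pm2),(i\pm2,j),(i\pm2,j\pm2)$, combining the four $C_4$-constraints above with the length-three path constraints through $v$; for instance, the path through $A, v, C, (i,j-2)$ forbids $c(A)=c(C)$ together with $c((i,j-2))=1$. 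A short case distinction on which pair(s) of $\{A,B,C,D\}$ repeat should force two of the outer vertices into the pattern of a bicolored $P_4$, giving the contradiction.

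A cleaner way to organize the same computation, and the one I would use to connect with the known grid bounds, is the change of coordinates $(i,j)\mapsto\big(\tfrac{i+j}{2},\tfrac{i-j}{2}\big)$ on a fixed parity class of $i+j$. Under this map the diagonal adjacency becomes the orthogonal grid adjacency, so each component of $C_m\times C_n$ is a toroidal grid (a Cartesian product of cycles up to a lattice quotient). For $m,n$ large enough to avoid wraparound identifications, a bounded patch of this grid is an induced $P_4\square P_4$, whence $\chi_s(C_m\times C_n)\ge \chi_s(P_4\square P_4)=5$ by Lemma~\ref{lem-subgraph} and Lemma~\ref{lem-1}; the thin strips, where the transformed grid is too narrow in one direction, can instead be shown to contain the $Z$- or $Y$-graph of Fig.~\ref{fig:zy-graph}, both of star chromatic number $5$.

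The main obstacle is precisely the small cycles. When $m$ or $n$ lies in $\{3,4\}$, the wraparound of the cycle creates extra adjacencies among what the generic picture treats as distinct second-neighborhood vertices (e.g. in $C_3$ the indices $j-1$ and $j+1$ are identified or adjacent), so both the outward propagation and the clean grid-subgraph reduction degenerate. These finitely many thin families therefore require separate, careful verification, and this case analysis---rather than the conceptual reduction---is where the real work lies.
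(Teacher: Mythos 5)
Your proposal correctly identifies the easy half of the argument: for $m,n\geq 7$ (or after your coordinate change, for $m,n$ large enough to avoid wraparound), $C_m\times C_n$ contains $P_4\square P_4$ as a subgraph, and Lemmas~\ref{lem-subgraph} and~\ref{lem-1} give the bound. This matches the paper. The gap is everything else. You explicitly defer the families with $\min(m,n)$ small, writing that they ``require separate, careful verification'' and that ``this case analysis\dots is where the real work lies'' --- but you never carry it out, and the unverified claims you do make are shaky. In particular, it is not clear that every thin family contains the $Z$- or $Y$-graph: $C_4\times C_n$ does contain $Y$ (the paper finds $Y$ inside $C_4\times P_n$), but for $C_3\times C_n$ the ambient graph has only three ``rows,'' and the paper never exhibits $Z$ or $Y$ inside it; indeed the paper does not use a subgraph argument for these cases at all. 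Your first sketch (second-neighborhood propagation) is likewise only a plan, not a proof. So as written, the proposal establishes the lower bound only for large $m,n$.

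The idea you are missing is the paper's covering/tiling argument, which disposes of all small cases at once with no case analysis: if $C_m\times C_n$ had a $4$-star coloring, repeating the coloring pattern three times in each direction (equivalently, pulling back along the covering map $C_{3m}\times C_{3n}\to C_m\times C_n$) would yield a $4$-star coloring of $C_{3m}\times C_{3n}$; since $3m,3n\geq 9$, that graph contains $P_4\square P_4$, contradicting the already-established bound for large parameters. The only thing to check is that the pullback of a star coloring along this covering is again a star coloring, which holds because for $m,n\geq 3$ a path on four vertices upstairs cannot project to a bicolored configuration downstairs. This reduction is exactly what lets the paper avoid the ``real work'' you flagged; without it, your proof is incomplete precisely on the cases that matter, including $C_3\times C_n$ and $C_4\times C_n$.
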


\begin{proof}
 The graph $C_m \times C_n$ contains $P_4 \square P_4$ as a subgraph when $m, n \geq 7$. Therefore, from Lemma \ref{lem-subgraph} and \ref{lem-1}, we have $\chi_s(C_m \times C_n) \geq \chi_s(P_4 \square P_4)=5$ for $m ,n \geq 7$. Consider the case when the minimum of $m$ and $n$ is at most $6$. Suppose $\chi_s(C_m \times C_n) \leq 4$
 and let $f$ be a $4$-star coloring of $C_m \times C_n$, then by selecting suitable copies of coloring of $C_m \times C_n$, we get a $4$-star coloring of $C_{3m} \times C_{3n}$ which is contradiction as  $C_{3m} \times C_{3n}$ contains $P_4 \square P_4$ as a subgraph, thus $\chi_s(C_{3m} \times C_{3n}) \geq 5$.  \qed
\end{proof}

\begin{lemma}~\label{thm-sylvester}\cite{sylvester1882subvariants}
Let $m$ and $n$ be two positive integers which are relatively prime. Then for every integer $k\geq(n-1)(m-1)$, there exist non-negative integers $\alpha$ and $\beta$ such that $k = \alpha n + \beta m$.
\end{lemma}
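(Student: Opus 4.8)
The plan is to prove this classical statement (Sylvester's result on the Frobenius problem for two coprime integers) by a residue-class argument that directly exhibits the coefficients $\alpha$ and $\beta$, rather than by the alternative strong-induction route. First I would exploit coprimality: since $\gcd(m,n)=1$, the $m$ numbers $0\cdot n,\,1\cdot n,\ldots,(m-1)n$ are pairwise incongruent modulo $m$ (if $\alpha_1 n \equiv \alpha_2 n \pmod m$ then $m \mid (\alpha_1-\alpha_2)n$, and coprimality forces $m \mid \alpha_1-\alpha_2$, hence $\alpha_1=\alpha_2$ within the range $\{0,\ldots,m-1\}$). Therefore these numbers form a complete residue system modulo $m$, and for the given $k$ there is a unique $\alpha \in \{0,1,\ldots,m-1\}$ with $\alpha n \equiv k \pmod m$.

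Next I would define $\beta := (k-\alpha n)/m$. By the choice of $\alpha$ this is an integer, and by construction $k = \alpha n + \beta m$; since $\alpha \geq 0$ already, the entire claim reduces to verifying $\beta \geq 0$. This is precisely where the hypothesis $k \geq (m-1)(n-1)$ is used. Because $\alpha \leq m-1$, we have $\alpha n \leq (m-1)n$, so
$$\beta m = k - \alpha n \geq (m-1)(n-1) - (m-1)n = -(m-1) > -m.$$
Dividing by the positive integer $m$ yields $\beta > -1$, and since $\beta$ is an integer this gives $\beta \geq 0$, completing the argument.

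The main (and essentially the only) subtlety is confirming that the threshold $(m-1)(n-1)$ is strong enough to rule out the worst case $\alpha = m-1$: there the remainder $k-\alpha n$ is pushed down to at least $-(m-1)$, which is strictly greater than $-m$. The crucial point is that $k-\alpha n$ is a multiple of $m$, so a value in the open interval $(-m,0)$ is impossible and it must be nonnegative. No finer hypothesis on $k$ is needed and the reasoning is uniform in $k$, so once the residue-system observation and this one inequality are in place the proof is immediate.
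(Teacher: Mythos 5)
Your proof is correct. The paper itself gives no proof of this lemma --- it is simply cited as a classical result of Sylvester --- so there is nothing to compare against; your residue-class argument (choosing the unique $\alpha \in \{0,\ldots,m-1\}$ with $\alpha n \equiv k \pmod m$ and then showing $k - \alpha n$ is a multiple of $m$ exceeding $-m$, hence nonnegative) is the standard complete derivation and fills that gap correctly.
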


\begin{lemma}
 $\chi_s(C_3 \times C_3)=6$ and  $\chi_s(C_3 \times C_5)=6$.
\end{lemma}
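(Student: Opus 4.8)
The plan is to prove each equality by sandwiching: from Lemma~\ref{th-atleast5} we already have $\chi_s(C_3\times C_3),\chi_s(C_3\times C_5)\ge 5$, so it suffices to (i) exhibit explicit $6$-star-colorings (pushing the upper bound down from the trivial bound of the first theorem) and (ii) rule out any $5$-star-coloring, raising the lower bound to $6$. For the upper bounds I would just display a pattern in each case: for $C_3\times C_3=K_3\times K_3$ the $3\times 3$ pattern $\left(\begin{smallmatrix}1&2&3\\4&5&6\\4&5&6\end{smallmatrix}\right)$ is proper and, since every pair of color classes induces only copies of $2K_2$ or a single star $K_{1,2}$, it is a valid $6$-star-coloring; for $C_3\times C_5$ I would give a $3\times 5$ pattern (e.g.\ rows $1$ and $2$ both carrying a fixed $4$-star-coloring of $C_5$, with row $0$ using two extra colors), and verify it directly. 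The real content is the lower bound, which I treat separately for the two graphs.

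For $C_3\times C_3$ I would argue as follows. First, since $C_3=K_3$, two cells $(i,j),(i',j')$ are non-adjacent iff $i=i'$ or $j=j'$, so every independent set is contained in a single row or a single column; in particular a color class of size $3$ is a full row or a full column. If, say, row $0$ were monochromatic, then any two equally colored cells outside row $0$ lie in a common row or common column, and in either case I can exhibit a bicolored $P_4$ through row $0$ (a common neighbor in row $0$, respectively a $K_{2,2}$ with the two remaining columns of row $0$); hence the other six cells would all get distinct colors, forcing $\ge 7$ colors. Thus with $\le 5$ colors every class has size $\le 2$, and since $\lceil 9/2\rceil =5$ the coloring uses exactly five classes, namely four pairs and one singleton. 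Each pair is a horizontal (same row) or vertical (same column) domino; writing $h+v=4$ with $h,v\le 3$ leaves the cases $(h,v)\in\{(1,3),(2,2),(3,1)\}$. Two horizontal pairs avoid a bicolored $P_4$ only when they occupy the \emph{same} two columns (otherwise the two column-sets meet in exactly one column and a $P_4$ appears); a symmetric statement holds for vertical pairs. In case $(2,2)$ the two forced-to-be-equal-column horizontal pairs leave only one column with two free cells, so two vertical pairs cannot be placed; in case $(3,1)$ (and symmetrically $(1,3)$) all three horizontal pairs share one column-pair, the free cells form the third column, and the vertical pair placed there together with the horizontal pair in the remaining row induces a $K_{2,2}$, hence a bicolored $P_4$. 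Since a singleton can never create a bicolored $P_4$ and bicolored paths live in a single pair of classes, these cases are exhaustive, and each ends in a contradiction, giving $\chi_s(C_3\times C_3)\ge 6$.

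For $C_3\times C_5$ the situation is structurally different: the graph is triangle-free and $4$-regular on $15$ vertices, each pair of rows inducing a $C_{10}$ (the bipartite double cover of $C_5$), and its independence number is $5$ rather than $3$. Consequently the ``independent sets are sub-rows or sub-columns'' shortcut is lost, and the clean domino case analysis above does not transfer. My plan here is to again reduce the possible class sizes (large classes force many forbidden bicolored $P_4$'s along the induced $C_{10}$'s and across rows), and then to enumerate the admissible placements of the remaining classes, showing each forces a bicolored $P_4$. I expect this enumeration to be substantially longer than the $C_3\times C_3$ one and, given the authors' stated reliance on exhaustive analysis for the $Z$- and $Y$-graphs, it may be most reliably discharged by a finite (possibly computer-assisted) check of the $5$-colorings of a suitable small forbidden configuration inside $C_3\times C_5$.

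The main obstacle is precisely this $C_3\times C_5$ lower bound: unlike $C_3\times C_3$, whose smallness and $K_3$-factor give a short, self-contained contradiction, $C_3\times C_5$ has more vertices, richer independent sets, and no triangles to pin colors down, so the argument has no single clean invariant and instead hinges on a careful, case-heavy exclusion of $5$-star-colorings.
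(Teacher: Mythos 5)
The paper offers no argument here at all---it simply asserts both values and explicitly omits the case analysis---so any genuine proof is an improvement, and your treatment of $C_3\times C_3$ is one. That half of your proposal is correct and complete: the observation that $C_3\times C_3=K_3\times K_3$ has independence number $3$ with every independent triple being a full row or column, the $P_4$ through a monochromatic row that forces seven colors if any class has size three, and the domino bookkeeping ($h+v=4$, $h,v\le 3$, same-column/same-row constraints, and the $K_{2,2}$ between a horizontal and a vertical pair in the $(3,1)$ and $(1,3)$ cases) all check out; together with the exhibited $6$-coloring and Lemma~\ref{th-atleast5} this is a clean, self-contained proof that $\chi_s(C_3\times C_3)=6$, which the paper does not provide.

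The genuine gap is the $C_3\times C_5$ half, in both directions. For the upper bound you describe a candidate pattern (two rows carrying a $4$-star coloring of $C_5$, the third row on two new colors) but never fix the pattern or verify that no bicolored $P_4$ alternates between the special row and the other two; this verification is nontrivial and depends on how the two extra colors are distributed along the row. For the lower bound you state only a plan (``reduce class sizes, then enumerate'') and concede it would likely require a computer check; no reduction is actually carried out, so $\chi_s(C_3\times C_5)\ge 6$ is not established. Note also that the structural remark you lean on is wrong: the independence number of $C_3\times C_5$ is at least $6$, not $5$ (take all three rows restricted to two non-adjacent columns of $C_5$), which only widens the case analysis you would need to control. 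As it stands your proposal proves exactly half of the lemma; the $C_3\times C_5$ claim remains, as in the paper, an unverified assertion.
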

\begin{proof}
 By performing a tedious case-by-case analysis we found that  $\chi_s(C_3 \times C_3)=6$ and  $\chi_s(C_3 \times C_5)=6$. Formal proof is omitted as it requires an extensive case analysis and its contribution to the theory would be minimal. \qed
\end{proof}

\begin{lemma}\label{th-c3cn}
 For every positive integer $n \geq 4$ and $n \neq 5$, we have $\chi_s(C_3 \times C_n)=5$.
\end{lemma}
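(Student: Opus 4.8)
The lower bound $\chi_s(C_3 \times C_n) \ge 5$ is already supplied by Lemma \ref{th-atleast5}, so the entire task is to produce, for every $n \ge 4$ with $n \ne 5$, a $5$-star coloring of $C_3 \times C_n$; the value $n=5$ is genuinely exceptional, since the preceding lemma shows $\chi_s(C_3 \times C_5)=6$. The plan is to first fix a convenient encoding. Writing the vertices as $(i,j)$ with $i \in \mathbb{Z}_3$ and $j \in \mathbb{Z}_n$, adjacency reads $(i,j)\sim(i',j')$ iff $i \ne i'$ and $j' \equiv j \pm 1 \pmod n$; in particular there are no edges inside a fixed column $j$, and two consecutive columns are joined by a copy of $K_{3,3}$ minus a perfect matching. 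Thus a coloring is simply a cyclic sequence of column patterns $c_0,\dots,c_{n-1}$ with each $c_j \colon \mathbb{Z}_3 \to [5]$, exactly the matrix representation used for Fig.~\ref{fig-suitable}.

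The key simplifying observation is that the star condition is local. Since every edge changes the column index by exactly one, any $P_4$ meets at most four consecutive columns, and its column indices follow one of finitely many profiles (the monotone profile $j,j{+}1,j{+}2,j{+}3$, the turning profile $j,j{+}1,j{+}2,j{+}1$, the oscillating profile $j,j{+}1,j,j{+}1$, the backtracking profile $j{-}1,j,j{+}1,j$, and so on). I would list these profiles once and record, for each, the constraint it places on the corresponding window of two, three or four consecutive column patterns. Both properness and the absence of a bicolored $P_4$ then reduce to a fixed, finite family of constraints on sliding windows of width at most four, so validity of any candidate cyclic pattern can be checked mechanically.

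With this in hand I would build the colorings by gluing. Starting from the explicit $5$-star coloring of $C_3 \times C_4$ read off from Fig.~\ref{fig-suitable} (columns $(1,3,5),(1,2,4),(1,2,4),(1,3,5)$), I would construct two insertable \emph{slabs} of widths $2$ and $3$ that share a common interface column with the base cylinder, so that splicing a slab into the cut between two columns leaves every sliding-window constraint satisfied. Granting such slabs, a base cylinder of length $4$ together with $\alpha$ copies of the width-$2$ slab and $\beta$ copies of the width-$3$ slab yields a valid $5$-star coloring of $C_3 \times C_{4+2\alpha+3\beta}$. Applying Lemma \ref{thm-sylvester} to the coprime pair $(2,3)$, so that $(2-1)(3-1)=2$ and every integer $\ge 2$ has the form $2\alpha+3\beta$, the reachable lengths are $4$ together with all $n \ge 6$, which is exactly $\{\,n \ge 4 : n \ne 5\,\}$. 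The Remark gives only the weaker closure under multiples and can serve as a sanity check, for instance lifting $C_3\times C_4$ to $C_3 \times C_{4q}$.

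The crux is the slab construction together with the boundary verification it forces. Because a bicolored $P_4$ can straddle a seam and spread across up to four consecutive columns, the two seams created by each inserted slab, not merely the slab interior, must be checked, and the slabs must be designed so that both seams avoid any bicolored $P_4$ while keeping the interface column identical on each side. This is where I expect the real effort to lie. Once compatible width-$2$ and width-$3$ slabs are exhibited, the finitely many window checks inside each slab and at each seam are routine, and the lemma follows by combining the construction with the lower bound of Lemma \ref{th-atleast5}.
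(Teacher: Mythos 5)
Your overall architecture matches the paper's: lower bound from Lemma~\ref{th-atleast5}, upper bound by viewing a coloring as a cyclic sequence of column patterns, reducing the star condition to sliding windows of at most four consecutive columns, and covering all admissible lengths via the numerical-semigroup argument of Lemma~\ref{thm-sylvester}. (The paper uses building blocks of widths $4$, $7$ and $9$ whose first three columns agree, rather than your widths $2$ and $3$, but that difference is cosmetic.) The problem is that your proof stops exactly where the content begins. The entire upper bound rests on the phrase ``Granting such slabs'': you never exhibit a width-$2$ or width-$3$ slab, nor argue that any exist, and their existence is not obvious. For instance, the natural attempt to extend the base coloring $X,Y,Y,X$ with $X=(1,3,5)$, $Y=(1,2,4)$ by repeating columns fails immediately: in the cyclic sequence $X,Y,Y,X,Y,Y$ the path $(0,1),(1,2),(0,3),(1,4)$ is colored $1,2,1,2$, a bicolored $P_4$. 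So a valid length-$6$ pattern must look genuinely different from the length-$4$ one (the paper's Fig.~\ref{fig-C3xCn} coloring of $C_3\times C_6$ indeed shares no periodic structure with that of $C_3\times C_4$), which is evidence that short insertable slabs compatible with a fixed interface may simply not exist. The paper sidesteps this by supplying explicit colorings of $C_3\times C_4$, $C_3\times C_6$, $C_3\times C_7$, $C_3\times C_9$, $C_3\times C_{10}$ and concatenating only blocks whose first three columns coincide.

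A second, smaller gap: when you insert $\alpha+\beta>1$ slabs you must also verify the slab-to-slab seams, not only the slab-to-base seams, and since a $P_4$ spans four columns, a width-$2$ slab is entirely contained in a single window together with one column on each side, so its constraints couple both neighbours simultaneously. None of this is fatal to the strategy, but as written the proposal is a plan for a proof rather than a proof: without the explicit column patterns (or at least a demonstration that compatible slabs exist), the upper bound $\chi_s(C_3\times C_n)\le 5$ is not established.
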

\begin{proof}
 By Lemma~\ref{thm-sylvester}, every positive integer greater than or equal to $18$ can be expressed as an integer linear combination of $4$ and $7$.
 As first three columns in colorings of $C_3 \times C_4$ and $C_3 \times C_7$ are identical (see Fig.~\ref{fig-C3xCn}), by selecting suitable copies of colorings of $C_3 \times C_4$ and $C_3 \times C_7$, we can obtain a $5$-star coloring of $C_3 \times C_n$ for $n \geq 18$.  Observe that every integer $n$, $n \in \{4,6,7 \ldots, 17\}\setminus \{6,9,10,13,17\}$ is an integer linear combination of $4$ and $7$. Therefore, $5$-star coloring of $C_3 \times C_n$ can be obtained by selecting suitable copies of colorings of $C_3 \times C_4$ and $C_3 \times C_7$. $5$-star colorings of $C_3 \times C_{6}$, $C_3 \times C_{9}$ and $C_3 \times C_{10}$ are shown in the Fig.~\ref{fig-C3xCn}. Since the colors of the first three columns of $5$-star coloring of $C_3 \times C_4$ and $C_3 \times C_9$ are identical, we can obtain $5$-star colorings of $C_3 \times C_{13}$ and $C_3 \times C_{17}$ by selecting suitable copies of colorings of $C_3 \times C_4$ and $C_3 \times C_9$. Thus, by considering Fig.~\ref{fig-C3xCn} and using Lemma \ref{th-atleast5}, the proof is complete. \qed
\end{proof}

\begin{figure}[ht]
 \centering
\begin{tikzpicture}[scale=.02, transform shape]
\node [draw, shape=circle] (v1) at (-300,-20) {};
\node [draw, shape=circle] (v2) at (-220,-20) {};
\node [draw, shape=circle] (v3) at (-300,-80) {};
\node [draw, shape=circle] (v4) at (-220,-80) {};

\node [scale=50] at (-290,-30) {1};
\node [scale=50] at (-290,-50) {3};
\node [scale=50] at (-290,-70) {5};

\node [scale=50] at (-270,-30) {1};
\node [scale=50] at (-270,-50) {2};
\node [scale=50] at (-270,-70) {4};

\node [scale=50] at (-250,-30) {1};
\node [scale=50] at (-250,-50) {2};
\node [scale=50] at (-250,-70) {4};

\node [scale=50] at (-230,-30) {1};
\node [scale=50] at (-230,-50) {3};
\node [scale=50] at (-230,-70) {5};

\draw (v1)--(v2)--(v4)--(v3)--(v1);
\node [scale=50] at (-260,-5) {$C_3 \times C_4$};


\node [draw, shape=circle] (u1) at (-190,-20) {};
\node [draw, shape=circle] (u2) at (-70,-20) {};
\node [draw, shape=circle] (u3) at (-190,-80) {};
\node [draw, shape=circle] (u4) at (-70,-80) {};

\node [scale=50] at (-180,-30) {1};
\node [scale=50] at (-180,-50) {3};
\node [scale=50] at (-180,-70) {5};

\node [scale=50] at (-160,-30) {1};
\node [scale=50] at (-160,-50) {2};
\node [scale=50] at (-160,-70) {5};

\node [scale=50] at (-140,-30) {1};
\node [scale=50] at (-140,-50) {4};
\node [scale=50] at (-140,-70) {3};

\node [scale=50] at (-120,-30) {2};
\node [scale=50] at (-120,-50) {4};
\node [scale=50] at (-120,-70) {3};

\node [scale=50] at (-100,-30) {2};
\node [scale=50] at (-100,-50) {1};
\node [scale=50] at (-100,-70) {5};

\node [scale=50] at (-80,-30) {2};
\node [scale=50] at (-80,-50) {3};
\node [scale=50] at (-80,-70) {4};

\draw (u1)--(u2)--(u4)--(u3)--(u1);
\node [scale=50] at (-130,-5) {$C_3 \times C_6$};


\node [draw, shape=circle] (v1) at (-40,-20) {};
\node [draw, shape=circle] (v2) at (100,-20) {};
\node [draw, shape=circle] (v3) at (-40,-80) {};
\node [draw, shape=circle] (v4) at (100,-80) {};

\node [scale=50] at (-30,-30) {1};
\node [scale=50] at (-30,-50) {3};
\node [scale=50] at (-30,-70) {5};

\node [scale=50] at (-10,-30) {1};
\node [scale=50] at (-10,-50) {2};
\node [scale=50] at (-10,-70) {4};

\node [scale=50] at (10,-30) {1};
\node [scale=50] at (10,-50) {2};
\node [scale=50] at (10,-70) {4};

\node [scale=50] at (30,-30) {1};
\node [scale=50] at (30,-50) {3};
\node [scale=50] at (30,-70) {5};

\node [scale=50] at (50,-30) {1};
\node [scale=50] at (50,-50) {3};
\node [scale=50] at (50,-70) {5};

\node [scale=50] at (70,-30) {2};
\node [scale=50] at (70,-50) {4};
\node [scale=50] at (70,-70) {4};

\node [scale=50] at (90,-30) {2};
\node [scale=50] at (90,-50) {3};
\node [scale=50] at (90,-70) {5};

\draw (v1)--(v2)--(v4)--(v3)--(v1);
\node [scale=50] at (20,-5) {$C_3 \times C_7$};


\node [draw, shape=circle] (u1) at (130,-20) {};
\node [draw, shape=circle] (u2) at (310,-20) {};
\node [draw, shape=circle] (u3) at (130,-80) {};
\node [draw, shape=circle] (u4) at (310,-80) {};

\node [scale=50] at (140,-30) {1};
\node [scale=50] at (140,-50) {3};
\node [scale=50] at (140,-70) {5};

\node [scale=50] at (160,-30) {1};
\node [scale=50] at (160,-50) {2};
\node [scale=50] at (160,-70) {4};

\node [scale=50] at (180,-30) {1};
\node [scale=50] at (180,-50) {2};
\node [scale=50] at (180,-70) {4};

\node [scale=50] at (200,-30) {1};
\node [scale=50] at (200,-50) {3};
\node [scale=50] at (200,-70) {5};

\node [scale=50] at (220,-30) {2};
\node [scale=50] at (220,-50) {3};
\node [scale=50] at (220,-70) {5};

\node [scale=50] at (240,-30) {1};
\node [scale=50] at (240,-50) {3};
\node [scale=50] at (240,-70) {4};

\node [scale=50] at (260,-30) {1};
\node [scale=50] at (260,-50) {2};
\node [scale=50] at (260,-70) {4};

\node [scale=50] at (280,-30) {1};
\node [scale=50] at (280,-50) {2};
\node [scale=50] at (280,-70) {5};

\node [scale=50] at (300,-30) {4};
\node [scale=50] at (300,-50) {3};
\node [scale=50] at (300,-70) {5};

\draw (u1)--(u2)--(u4)--(u3)--(u1);
\node [scale=50] at (220,-5) {$C_3 \times C_9$};


\node [draw, shape=circle] (u1) at (-110,-120) {};
\node [draw, shape=circle] (u2) at (90,-120) {};
\node [draw, shape=circle] (u3) at (-110,-180) {};
\node [draw, shape=circle] (u4) at (90,-180) {};

\node [scale=50] at (-100,-130) {1};
\node [scale=50] at (-100,-150) {4};
\node [scale=50] at (-100,-170) {5};

\node [scale=50] at (-80,-130) {1};
\node [scale=50] at (-80,-150) {4};
\node [scale=50] at (-80,-170) {5};

\node [scale=50] at (-60,-130) {1};
\node [scale=50] at (-60,-150) {2};
\node [scale=50] at (-60,-170) {3};

\node [scale=50] at (-40,-130) {1};
\node [scale=50] at (-40,-150) {2};
\node [scale=50] at (-40,-170) {3};

\node [scale=50] at (-20,-130) {1};
\node [scale=50] at (-20,-150) {4};
\node [scale=50] at (-20,-170) {5};

\node [scale=50] at (0,-130) {1};
\node [scale=50] at (0,-150) {4};
\node [scale=50] at (0,-170) {5};

\node [scale=50] at (20,-130) {2};
\node [scale=50] at (20,-150) {3};
\node [scale=50] at (20,-170) {3};

\node [scale=50] at (40,-130) {2};
\node [scale=50] at (40,-150) {5};
\node [scale=50] at (40,-170) {4};

\node [scale=50] at (60,-130) {3};
\node [scale=50] at (60,-150) {5};
\node [scale=50] at (60,-170) {4};

\node [scale=50] at (80,-130) {3};
\node [scale=50] at (80,-150) {2};
\node [scale=50] at (80,-170) {2};

\draw (u1)--(u2)--(u4)--(u3)--(u1);
\node [scale=50] at (-10,-105) {$C_3 \times C_{10}$};

\end{tikzpicture}
  \caption{~$5$-star colorings of $C_3 \times C_n$, $n \in \{4,6,7,9,10\}$ }\label{fig-C3xCn}
\end{figure}


\begin{lemma}\label{th-c4cn}
 For every positive integer $n \geq 4$, we have $\chi_s(C_4 \times C_n)=5$.
\end{lemma}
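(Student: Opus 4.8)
The plan is to mirror the strategy of Lemma~\ref{th-c3cn}. The lower bound $\chi_s(C_4 \times C_n) \geq 5$ is immediate from Lemma~\ref{th-atleast5}, so the entire task reduces to exhibiting a $5$-star coloring of $C_4 \times C_n$ for every $n \geq 4$. As in the earlier proofs, I would represent a coloring as a cyclic $4 \times n$ pattern, the four rows corresponding to the copies of $C_4$ and the $n$ columns to the vertices of $C_n$, with the understanding that column $n$ is tensor-adjacent to column $1$.

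The structural fact I would exploit is that every edge of $C_4 \times C_n$ changes the column index by exactly $\pm 1$, so any path on four vertices lives inside a window of three consecutive columns. Consequently, once two cyclic patterns agree on their first few columns, they can be spliced: if the $4 \times a$ pattern $A$ and the $4 \times b$ pattern $B$ are valid $5$-star colorings of $C_4 \times C_a$ and $C_4 \times C_b$ whose initial columns coincide, then the concatenated $4 \times (a+b)$ pattern is again a valid $5$-star coloring of $C_4 \times C_{a+b}$, since every three-column window straddling a seam coincides with a window already certified inside $A$ or inside $B$ (including the wrap-around windows). Iterating, any pattern assembled from $\alpha$ copies of $A$ and $\beta$ copies of $B$ is a $5$-star coloring of $C_4 \times C_{a\alpha + b\beta}$.

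With this gluing principle in hand I would fix two coprime widths $a$ and $b$ (for instance $a=3$ and $b=4$, noting that $C_4 \times C_3 = C_3 \times C_4$ already has a known $5$-star coloring), exhibit explicit $5$-star colorings of $C_4 \times C_a$ and $C_4 \times C_b$ whose leading columns are identical, and then invoke Lemma~\ref{thm-sylvester}: as $\gcd(a,b)=1$, every integer $n \geq (a-1)(b-1)$ is of the form $a\alpha + b\beta$, so all sufficiently large $n$ are covered by splicing. With the choice $a=3,b=4$ the Frobenius bound is $6$, and the only value of $n \geq 4$ not representable as $3\alpha+4\beta$ is $n=5$; this finite exceptional set would be settled by displaying explicit $5$-star colorings in a figure, exactly as the cases $C_3\times C_6$, $C_3\times C_9$, $C_3\times C_{10}$ were handled in Lemma~\ref{th-c3cn}.

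The main obstacle is the construction of the base patterns themselves. I must produce explicit $5$-star colorings of the two chosen widths that are correct as cyclic colorings (respecting the diagonal wrap-around adjacency of both $C_4$ and $C_a$, $C_b$) and that share enough leading columns for the splicing to apply. Checking that no $P_4$ is bicolored is delicate precisely at the seams and at the vertical wrap of $C_4$, so the real content is a finite but careful verification on these base colorings together with the exceptional value $n=5$; once these are established, the extension to all $n \geq 4$ follows routinely from the gluing principle and Lemma~\ref{thm-sylvester}.
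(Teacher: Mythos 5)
Your strategy is the same as the paper's: lower bound from Lemma~\ref{th-atleast5}, explicit base patterns whose leading columns agree, splicing, and Lemma~\ref{thm-sylvester} to cover all large $n$ (the paper uses widths $4$ and $5$ with exceptional cases $n\in\{6,7,11\}$ handled by separate figures; your choice of widths $3$ and $4$ would leave only $n=5$ exceptional). However, there are two genuine problems. First, your justification of the gluing principle is off by one: since every edge of $C_4\times C_n$ shifts the column index by $\pm 1$, a path on four vertices can have monotone column sequence $j,j+1,j+2,j+3$ and hence spans a window of \emph{four} consecutive columns, not three. This is not cosmetic: it is exactly why the paper requires the first \emph{three} columns of the two base patterns to coincide (so that any seam-straddling window reduces to a cyclic window of one base block), whereas a three-column-window analysis would suggest that two columns of agreement suffice, and a splice built on that weaker requirement can fail. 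It also makes your choice of a width-$3$ block delicate, since a four-column window can then overrun an entire copy of the block and touch both of its neighbours, so the reduction to certified windows needs a separate argument; the paper sidesteps this by using blocks of widths $4$ and $5$.

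Second, and more importantly, you never produce the base colorings. You correctly identify that the entire content of the lemma is the finite verification of explicit $5$-star colorings of the base widths (with matching leading columns and valid wrap-around in both the $C_4$ and $C_n$ directions) plus the exceptional value $n=5$, and then you defer exactly that. The paper's proof \emph{is} those explicit patterns (Fig.~\ref{C4} gives colorings of $C_4\times C_n$ for $n\in\{4,5,6,7,11\}$); without them, what you have is a correct reduction of the lemma to a finite search, not a proof. To complete the argument you must exhibit the patterns and check them, including the diagonal wrap-around adjacencies, which is where such constructions typically go wrong.
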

\begin{proof}
 By Lemma~\ref{thm-sylvester}, every positive integer greater than or equal to $12$ can be expressed as an integer linear combination of $4$ and $5$.
 As first three columns of $C_4 \times C_4$ and $C_4 \times C_5$ are identical (see Fig.~\ref{C4}), by selecting suitable copies of colorings of $C_4 \times C_4$ and $C_4 \times C_5$, we can obtain a $5$-star coloring of $C_4 \times C_n$ for $n \geq 12$. As every integer $n \in \{4,5,8,9,10\}$ can be expressed as an integer linear combination of $4$ and $5$, we get a $5$-star coloring of $C_4 \times C_n$ for $n \in \{4,5,8,9,10\}$. $5$-star colorings of $C_4 \times C_6$, $C_4 \times C_7$ and $C_4 \times C_{11}$ are given in the Fig.~\ref{C4}. Thus, by considering Fig.~\ref{C4} and using Lemma \ref{th-atleast5}, the proof is complete. \qed
\end{proof}

\begin{figure}[ht]
 \centering
\begin{tikzpicture}[scale=.02, transform shape]
\node [draw, shape=circle] (v1) at (-300,-20) {};
\node [draw, shape=circle] (v2) at (-220,-20) {};
\node [draw, shape=circle] (v3) at (-300,-100) {};
\node [draw, shape=circle] (v4) at (-220,-100) {};

\node [scale=50] at (-290,-30) {1};
\node [scale=50] at (-290,-50) {3};
\node [scale=50] at (-290,-70) {1};
\node [scale=50] at (-290,-90) {5};

\node [scale=50] at (-270,-30) {1};
\node [scale=50] at (-270,-50) {2};
\node [scale=50] at (-270,-70) {1};
\node [scale=50] at (-270,-90) {4};

\node [scale=50] at (-250,-30) {1};
\node [scale=50] at (-250,-50) {4};
\node [scale=50] at (-250,-70) {1};
\node [scale=50] at (-250,-90) {2};

\node [scale=50] at (-230,-30) {1};
\node [scale=50] at (-230,-50) {5};
\node [scale=50] at (-230,-70) {1};
\node [scale=50] at (-230,-90) {3};

\draw (v1)--(v2)--(v4)--(v3)--(v1);
\node [scale=50] at (-260,-5) {$C_4 \times C_4$};


\node [draw, shape=circle] (u1) at (-190,-20) {};
\node [draw, shape=circle] (u2) at (-90,-20) {};
\node [draw, shape=circle] (u3) at (-190,-100) {};
\node [draw, shape=circle] (u4) at (-90,-100) {};

\node [scale=50] at (-180,-30) {1};
\node [scale=50] at (-180,-50) {3};
\node [scale=50] at (-180,-70) {1};
\node [scale=50] at (-180,-90) {5};

\node [scale=50] at (-160,-30) {1};
\node [scale=50] at (-160,-50) {2};
\node [scale=50] at (-160,-70) {1};
\node [scale=50] at (-160,-90) {4};

\node [scale=50] at (-140,-30) {1};
\node [scale=50] at (-140,-50) {4};
\node [scale=50] at (-140,-70) {1};
\node [scale=50] at (-140,-90) {2};

\node [scale=50] at (-120,-30) {1};
\node [scale=50] at (-120,-50) {5};
\node [scale=50] at (-120,-70) {1};
\node [scale=50] at (-120,-90) {3};

\node [scale=50] at (-100,-30) {2};
\node [scale=50] at (-100,-50) {5};
\node [scale=50] at (-100,-70) {4};
\node [scale=50] at (-100,-90) {3};

\draw (u1)--(u2)--(u4)--(u3)--(u1);
\node [scale=50] at (-140,-5) {$C_4 \times C_5$};


\node [draw, shape=circle] (v1) at (-40,-20) {};
\node [draw, shape=circle] (v2) at (80,-20) {};
\node [draw, shape=circle] (v3) at (-40,-100) {};
\node [draw, shape=circle] (v4) at (80,-100) {};

\node [scale=50] at (-30,-30) {1};
\node [scale=50] at (-30,-50) {3};
\node [scale=50] at (-30,-70) {1};
\node [scale=50] at (-30,-90) {5};

\node [scale=50] at (-10,-30) {1};
\node [scale=50] at (-10,-50) {2};
\node [scale=50] at (-10,-70) {1};
\node [scale=50] at (-10,-90) {4};

\node [scale=50] at (10,-30) {1};
\node [scale=50] at (10,-50) {2};
\node [scale=50] at (10,-70) {1};
\node [scale=50] at (10,-90) {4};

\node [scale=50] at (30,-30) {1};
\node [scale=50] at (30,-50) {3};
\node [scale=50] at (30,-70) {1};
\node [scale=50] at (30,-90) {5};

\node [scale=50] at (50,-30) {2};
\node [scale=50] at (50,-50) {3};
\node [scale=50] at (50,-70) {4};
\node [scale=50] at (50,-90) {5};

\node [scale=50] at (70,-30) {2};
\node [scale=50] at (70,-50) {3};
\node [scale=50] at (70,-70) {4};
\node [scale=50] at (70,-90) {5};

\draw (v1)--(v2)--(v4)--(v3)--(v1);
\node [scale=50] at (20,-5) {$C_4 \times C_6$};
\node [draw, shape=circle] (u1) at (140,-20) {};
\node [draw, shape=circle] (u2) at (280,-20) {};
\node [draw, shape=circle] (u3) at (140,-100) {};
\node [draw, shape=circle] (u4) at (280,-100) {};

\node [scale=50] at (150,-30) {1};
\node [scale=50] at (150,-50) {3};
\node [scale=50] at (150,-70) {1};
\node [scale=50] at (150,-90) {5};

\node [scale=50] at (170,-30) {1};
\node [scale=50] at (170,-50) {3};
\node [scale=50] at (170,-70) {1};
\node [scale=50] at (170,-90) {5};

\node [scale=50] at (190,-30) {1};
\node [scale=50] at (190,-50) {2};
\node [scale=50] at (190,-70) {1};
\node [scale=50] at (190,-90) {4};

\node [scale=50] at (210,-30) {1};
\node [scale=50] at (210,-50) {2};
\node [scale=50] at (210,-70) {1};
\node [scale=50] at (210,-90) {4};

\node [scale=50] at (230,-30) {1};
\node [scale=50] at (230,-50) {3};
\node [scale=50] at (230,-70) {1};
\node [scale=50] at (230,-90) {5};

\node [scale=50] at (250,-30) {1};
\node [scale=50] at (250,-50) {3};
\node [scale=50] at (250,-70) {1};
\node [scale=50] at (250,-90) {5};

\node [scale=50] at (270,-30) {2};
\node [scale=50] at (270,-50) {2};
\node [scale=50] at (270,-70) {4};
\node [scale=50] at (270,-90) {4};

\draw (u1)--(u2)--(u4)--(u3)--(u1);
\node [scale=50] at (210,-5) {$C_4 \times C_{7}$};


\node [draw, shape=circle] (u1) at (-150,-140) {};
\node [draw, shape=circle] (u2) at (70,-140) {};
\node [draw, shape=circle] (u3) at (-150,-220) {};
\node [draw, shape=circle] (u4) at (70,-220) {};

\node [scale=50] at (-140,-150) {4};
\node [scale=50] at (-140,-170) {1};
\node [scale=50] at (-140,-190) {3};
\node [scale=50] at (-140,-210) {1};

\node [scale=50] at (-120,-150) {3};
\node [scale=50] at (-120,-170) {1};
\node [scale=50] at (-120,-190) {4};
\node [scale=50] at (-120,-210) {1};

\node [scale=50] at (-100,-150) {5};
\node [scale=50] at (-100,-170) {1};
\node [scale=50] at (-100,-190) {2};
\node [scale=50] at (-100,-210) {1};

\node [scale=50] at (-80,-150) {2};
\node [scale=50] at (-80,-170) {1};
\node [scale=50] at (-80,-190) {5};
\node [scale=50] at (-80,-210) {1};

\node [scale=50] at (-60,-150) {4};
\node [scale=50] at (-60,-170) {1};
\node [scale=50] at (-60,-190) {3};
\node [scale=50] at (-60,-210) {1};

\node [scale=50] at (-40,-150) {3};
\node [scale=50] at (-40,-170) {1};
\node [scale=50] at (-40,-190) {4};
\node [scale=50] at (-40,-210) {1};

\node [scale=50] at (-20,-150) {5};
\node [scale=50] at (-20,-170) {1};
\node [scale=50] at (-20,-190) {2};
\node [scale=50] at (-20,-210) {1};

\node [scale=50] at (-0,-150) {2};
\node [scale=50] at (-0,-170) {1};
\node [scale=50] at (-0,-190) {5};
\node [scale=50] at (-0,-210) {1};

\node [scale=50] at (20,-150) {4};
\node [scale=50] at (20,-170) {1};
\node [scale=50] at (20,-190) {3};
\node [scale=50] at (20,-210) {1};

\node [scale=50] at (40,-150) {3};
\node [scale=50] at (40,-170) {1};
\node [scale=50] at (40,-190) {4};
\node [scale=50] at (40,-210) {1};

\node [scale=50] at (60,-150) {5};
\node [scale=50] at (60,-170) {2};
\node [scale=50] at (60,-190) {2};
\node [scale=50] at (60,-210) {5};

\draw (u1)--(u2)--(u4)--(u3)--(u1);
\node [scale=50] at (-40,-125) {$C_4 \times C_{11}$};

\end{tikzpicture}
  \caption{~$5$-star colorings of $C_4 \times C_n$, $n \in \{4,5,6,7,11\}$}\label{C4}
\end{figure}


\begin{lemma}\label{th-c5cn}
 For every positive integer $n \geq 5$, we have $\chi_s(C_5 \times C_n)=5$.
\end{lemma}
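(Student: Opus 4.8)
The plan is to follow the template of Lemmas~\ref{th-c3cn} and~\ref{th-c4cn}. The lower bound $\chi_s(C_5 \times C_n) \ge 5$ holds for every $n \ge 3$ by Lemma~\ref{th-atleast5}, so the whole task lies in exhibiting, for each $n \ge 5$, a $5$-star coloring of $C_5 \times C_n$; matching the two bounds then gives $\chi_s(C_5 \times C_n) = 5$.

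For the upper bound I would assemble every coloring by horizontal concatenation of a short list of base blocks, each drawn in the matrix format of Figs.~\ref{fig-C3xCn} and~\ref{C4}. Concretely, I would produce explicit $5$-star colorings of $C_5 \times C_a$ for, say, $a \in \{5,6,7,8,9\}$, all sharing the same first three columns. The three-column agreement is exactly what makes splicing safe: in a tensor product every edge moves both coordinates, so any path on four vertices occupies at most four consecutive columns; hence if two patterns agree on their first three columns, then pasting the last column of one block immediately before the first column of the next reproduces, around each internal seam and around the cyclic wrap-around seam, a four-column window that already occurs inside a verified block, so no new bicolored $P_4$ is introduced. Concatenating blocks of lengths $a_1, \dots, a_t$ thus yields a valid $5$-star coloring of $C_5 \times C_{a_1 + \cdots + a_t}$.

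It then remains to show that every $n \ge 5$ is a sum of the available block lengths. Taking the coprime pair $(5,6)$, Lemma~\ref{thm-sylvester} covers all $n \ge (5-1)(6-1) = 20$ as $5\alpha + 6\beta$; the finitely many smaller values that are not of this form, namely $7,8,9,13,14,19$, are obtained either directly as base blocks ($7,8,9$) or as short sums using them ($13 = 6+7$, $14 = 7+7$, $19 = 5+7+7$). Equivalently, since $\{5,6,7,8,9\}$ are five consecutive integers, a one-line induction, writing $n = 5 + (n-5)$ for $n \ge 10$, shows every $n \ge 5$ is such a sum and bypasses Lemma~\ref{thm-sylvester} altogether. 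Combining these colorings with the lower bound from Lemma~\ref{th-atleast5} completes the argument.

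I expect the only real difficulty to be the construction of the base blocks: one must find five colorings with a common three-column prefix and verify, by checking every $P_4$ inside each block and across the splice and wrap-around seams, that at least three colors appear. This is the tedious, figure-driven step; once these blocks are fixed and verified, the covering argument above together with Lemma~\ref{th-atleast5} immediately yields $\chi_s(C_5 \times C_n) = 5$ for all $n \ge 5$.
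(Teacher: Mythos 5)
Your strategy coincides with the paper's: the lower bound is Lemma~\ref{th-atleast5}, and the upper bound is obtained by horizontally splicing explicit $5$-star colorings of $C_5\times C_a$ that share a common three-column prefix, with a numerical-semigroup argument (Lemma~\ref{thm-sylvester}, or your consecutive-integers induction) covering all remaining $n$. Your justification of why three shared columns make the splice safe --- every $P_4$ in a tensor product occupies at most four consecutive columns, so each window crossing a seam or the wrap-around already occurs inside a verified cyclic block --- is correct and in fact more explicit than the paper's one-line ``by selecting suitable copies.'' The only structural difference is bookkeeping: the paper uses blocks of lengths $4$ and $5$ together with separate ad hoc colorings of $C_5\times C_6$, $C_5\times C_7$ and $C_5\times C_{11}$ (Fig.~\ref{C5}), whereas you propose prefix-compatible blocks of lengths $5$ through $9$. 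The one substantive shortfall is that the explicit base colorings are the actual mathematical content of this lemma, and you have not exhibited them; their existence with a common three-column prefix is exactly what the figures establish, so until your five blocks are written down and checked the argument remains a correct template rather than a complete proof.
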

\begin{proof}
 By Lemma~\ref{thm-sylvester}, every positive integer greater than or equal to $12$ can be expressed as an integer linear combination of $4$ and $5$.
 As first three columns of $C_5 \times C_4$ and $C_5 \times C_5$ are identical (see Fig.~\ref{C5}), by selecting suitable copies of colorings of $C_5 \times C_4$ and $C_5 \times C_5$, we can obtain a $5$-star coloring of $C_5 \times C_n$ for $n \geq 12$. As every integer $n \in \{5,8,9,10\}$ can be expressed as an integer linear combination of $4$ and $5$, we get a $5$-star coloring of $C_5 \times C_n$ for $n \in \{5,8,9,10\}$. $5$-star colorings of $C_5 \times C_6$, $C_5 \times C_7$ and $C_5 \times C_{11}$ are given in the Fig.~\ref{C5}. Thus, by considering Fig.~\ref{C5} and using Lemma \ref{th-atleast5}, the proof is complete. \qed
\end{proof}


\begin{figure}[ht]
 \centering
\begin{tikzpicture}[scale=.02, transform shape]
\node [draw, shape=circle] (v1) at (-300,0) {};
\node [draw, shape=circle] (v2) at (-220,0) {};
\node [draw, shape=circle] (v3) at (-300,-100) {};
\node [draw, shape=circle] (v4) at (-220,-100) {};

\node [scale=50] at (-290,-10) {1};
\node [scale=50] at (-290,-30) {3};
\node [scale=50] at (-290,-50) {1};
\node [scale=50] at (-290,-70) {2};
\node [scale=50] at (-290,-90) {5};

\node [scale=50] at (-270,-10) {1};
\node [scale=50] at (-270,-30) {2};
\node [scale=50] at (-270,-50) {1};
\node [scale=50] at (-270,-70) {3};
\node [scale=50] at (-270,-90) {4};

\node [scale=50] at (-250,-10) {1};
\node [scale=50] at (-250,-30) {4};
\node [scale=50] at (-250,-50) {4};
\node [scale=50] at (-250,-70) {5};
\node [scale=50] at (-250,-90) {2};

\node [scale=50] at (-230,-10) {1};
\node [scale=50] at (-230,-30) {5};
\node [scale=50] at (-230,-50) {1};
\node [scale=50] at (-230,-70) {3};
\node [scale=50] at (-230,-90) {3};

\draw (v1)--(v2)--(v4)--(v3)--(v1);
\node [scale=50] at (-260,15) {$C_5 \times C_4$};


\node [draw, shape=circle] (u1) at (-200,0) {};
\node [draw, shape=circle] (u2) at (-100,0) {};
\node [draw, shape=circle] (u3) at (-200,-100) {};
\node [draw, shape=circle] (u4) at (-100,-100) {};

\node [scale=50] at (-190,-10) {1};
\node [scale=50] at (-190,-30) {3};
\node [scale=50] at (-190,-50) {1};
\node [scale=50] at (-190,-70) {2};
\node [scale=50] at (-190,-90) {5};

\node [scale=50] at (-170,-10) {1};
\node [scale=50] at (-170,-30) {2};
\node [scale=50] at (-170,-50) {1};
\node [scale=50] at (-170,-70) {3};
\node [scale=50] at (-170,-90) {4};

\node [scale=50] at (-150,-10) {1};
\node [scale=50] at (-150,-30) {4};
\node [scale=50] at (-150,-50) {4};
\node [scale=50] at (-150,-70) {5};
\node [scale=50] at (-150,-90) {2};

\node [scale=50] at (-130,-10) {1};
\node [scale=50] at (-130,-30) {5};
\node [scale=50] at (-130,-50) {2};
\node [scale=50] at (-130,-70) {1};
\node [scale=50] at (-130,-90) {3};

\node [scale=50] at (-110,-10) {2};
\node [scale=50] at (-110,-30) {5};
\node [scale=50] at (-110,-50) {4};
\node [scale=50] at (-110,-70) {4};
\node [scale=50] at (-110,-90) {3};

\draw (u1)--(u2)--(u4)--(u3)--(u1);
\node [scale=50] at (-140,15) {$C_5 \times C_5$};


\node [draw, shape=circle] (v1) at (-60,0) {};
\node [draw, shape=circle] (v2) at (60,0) {};
\node [draw, shape=circle] (v3) at (-60,-100) {};
\node [draw, shape=circle] (v4) at (60,-100) {};

\node [scale=50] at (-50,-10) {1};
\node [scale=50] at (-50,-30) {2};
\node [scale=50] at (-50,-50) {3};
\node [scale=50] at (-50,-70) {1};
\node [scale=50] at (-50,-90) {2};

\node [scale=50] at (-30,-10) {1};
\node [scale=50] at (-30,-30) {2};
\node [scale=50] at (-30,-50) {4};
\node [scale=50] at (-30,-70) {5};
\node [scale=50] at (-30,-90) {3};

\node [scale=50] at (-10,-10) {1};
\node [scale=50] at (-10,-30) {3};
\node [scale=50] at (-10,-50) {3};
\node [scale=50] at (-10,-70) {5};
\node [scale=50] at (-10,-90) {4};

\node [scale=50] at (10,-10) {1};
\node [scale=50] at (10,-30) {4};
\node [scale=50] at (10,-50) {2};
\node [scale=50] at (10,-70) {1};
\node [scale=50] at (10,-90) {4};

\node [scale=50] at (30,-10) {2};
\node [scale=50] at (30,-30) {5};
\node [scale=50] at (30,-50) {5};
\node [scale=50] at (30,-70) {3};
\node [scale=50] at (30,-90) {2};

\node [scale=50] at (50,-10) {3};
\node [scale=50] at (50,-30) {4};
\node [scale=50] at (50,-50) {4};
\node [scale=50] at (50,-70) {4};
\node [scale=50] at (50,-90) {5};

\draw (v1)--(v2)--(v4)--(v3)--(v1);
\node [scale=50] at (-10,15) {$C_5 \times C_6$};


\node [draw, shape=circle] (u1) at (100,0) {};
\node [draw, shape=circle] (u2) at (240,0) {};
\node [draw, shape=circle] (u3) at (100,-100) {};
\node [draw, shape=circle] (u4) at (240,-100) {};

\node [scale=50] at (110,-10) {1};
\node [scale=50] at (110,-30) {3};
\node [scale=50] at (110,-50) {1};
\node [scale=50] at (110,-70) {2};
\node [scale=50] at (110,-90) {4};

\node [scale=50] at (130,-10) {1};
\node [scale=50] at (130,-30) {4};
\node [scale=50] at (130,-50) {4};
\node [scale=50] at (130,-70) {3};
\node [scale=50] at (130,-90) {3};

\node [scale=50] at (150,-10) {1};
\node [scale=50] at (150,-30) {2};
\node [scale=50] at (150,-50) {1};
\node [scale=50] at (150,-70) {5};
\node [scale=50] at (150,-90) {5};

\node [scale=50] at (170,-10) {1};
\node [scale=50] at (170,-30) {5};
\node [scale=50] at (170,-50) {1};
\node [scale=50] at (170,-70) {2};
\node [scale=50] at (170,-90) {2};

\node [scale=50] at (190,-10) {1};
\node [scale=50] at (190,-30) {3};
\node [scale=50] at (190,-50) {3};
\node [scale=50] at (190,-70) {4};
\node [scale=50] at (190,-90) {4};

\node [scale=50] at (210,-10) {1};
\node [scale=50] at (210,-30) {4};
\node [scale=50] at (210,-50) {1};
\node [scale=50] at (210,-70) {2};
\node [scale=50] at (210,-90) {3};

\node [scale=50] at (230,-10) {2};
\node [scale=50] at (230,-30) {2};
\node [scale=50] at (230,-50) {1};
\node [scale=50] at (230,-70) {5};
\node [scale=50] at (230,-90) {5};

\draw (u1)--(u2)--(u4)--(u3)--(u1);
\node [scale=50] at (170,15) {$C_5 \times C_7$};


\node [draw, shape=circle] (u1) at (-150,-140) {};
\node [draw, shape=circle] (u2) at (70,-140) {};
\node [draw, shape=circle] (u3) at (-150,-240) {};
\node [draw, shape=circle] (u4) at (70,-240) {};

\node [scale=50] at (-140,-150) {4};
\node [scale=50] at (-140,-170) {1};
\node [scale=50] at (-140,-190) {3};
\node [scale=50] at (-140,-210) {1};
\node [scale=50] at (-140,-230) {2};

\node [scale=50] at (-120,-150) {3};
\node [scale=50] at (-120,-170) {1};
\node [scale=50] at (-120,-190) {4};
\node [scale=50] at (-120,-210) {4};
\node [scale=50] at (-120,-230) {3};

\node [scale=50] at (-100,-150) {5};
\node [scale=50] at (-100,-170) {1};
\node [scale=50] at (-100,-190) {2};
\node [scale=50] at (-100,-210) {1};
\node [scale=50] at (-100,-230) {5};

\node [scale=50] at (-80,-150) {2};
\node [scale=50] at (-80,-170) {1};
\node [scale=50] at (-80,-190) {5};
\node [scale=50] at (-80,-210) {1};
\node [scale=50] at (-80,-230) {2};

\node [scale=50] at (-60,-150) {4};
\node [scale=50] at (-60,-170) {1};
\node [scale=50] at (-60,-190) {3};
\node [scale=50] at (-60,-210) {3};
\node [scale=50] at (-60,-230) {4};

\node [scale=50] at (-40,-150) {3};
\node [scale=50] at (-40,-170) {1};
\node [scale=50] at (-40,-190) {4};
\node [scale=50] at (-40,-210) {1};
\node [scale=50] at (-40,-230) {2};

\node [scale=50] at (-20,-150) {5};
\node [scale=50] at (-20,-170) {1};
\node [scale=50] at (-20,-190) {2};
\node [scale=50] at (-20,-210) {1};
\node [scale=50] at (-20,-230) {5};

\node [scale=50] at (-0,-150) {2};
\node [scale=50] at (-0,-170) {1};
\node [scale=50] at (-0,-190) {5};
\node [scale=50] at (-0,-210) {1};
\node [scale=50] at (0,-230) {2};

\node [scale=50] at (20,-150) {4};
\node [scale=50] at (20,-170) {1};
\node [scale=50] at (20,-190) {3};
\node [scale=50] at (20,-210) {3};
\node [scale=50] at (20,-230) {4};

\node [scale=50] at (40,-150) {3};
\node [scale=50] at (40,-170) {1};
\node [scale=50] at (40,-190) {4};
\node [scale=50] at (40,-210) {1};
\node [scale=50] at (40,-230) {2};

\node [scale=50] at (60,-150) {5};
\node [scale=50] at (60,-170) {2};
\node [scale=50] at (60,-190) {2};
\node [scale=50] at (60,-210) {1};
\node [scale=50] at (60,-230) {5};

\draw (u1)--(u2)--(u4)--(u3)--(u1);
\node [scale=50] at (-40,-125) {$C_5 \times C_{11}$};

\end{tikzpicture}
  \caption{~$5$-star colorings of $C_5 \times C_n$, $n \in \{4,5,6,7,11\}$}\label{C5}
\end{figure}


\begin{lemma}\label{C7Cn}
 For every positive integer $n \geq 7$, we have $\chi_s(C_7 \times C_n)=5$.
\end{lemma}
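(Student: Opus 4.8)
The plan is to follow the same two-part template used in Lemmas~\ref{th-c4cn} and~\ref{th-c5cn}. The lower bound $\chi_s(C_7 \times C_n) \geq 5$ is immediate from Lemma~\ref{th-atleast5}, so the entire task reduces to exhibiting, for every $n \geq 7$, a $5$-star coloring of $C_7 \times C_n$. I would represent such a coloring as a $7 \times n$ pattern (seven rows for $C_7$, $n$ columns for $C_n$), recalling that in the tensor product two cells are adjacent exactly when their row indices are consecutive modulo $7$ \emph{and} their column indices are consecutive modulo $n$; thus every edge is ``diagonal'' and a $P_4$ is a four-vertex path under this diagonal adjacency.

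First I would fix the coprime base lengths $4$ and $5$ and construct two explicit $5$-star colorings, one of $C_7 \times C_4$ and one of $C_7 \times C_5$, designed so that their first three columns are identical (that such colorings exist at all already follows from Lemmas~\ref{th-c4cn} and~\ref{th-c5cn}, since $\chi_s(C_7\times C_4)=\chi_s(C_7\times C_5)=5$). Writing $n = 4\alpha + 5\beta$ with $\alpha,\beta \geq 0$, I would then tile the $7 \times n$ pattern by placing $\alpha$ copies of the $C_7\times C_4$ block and $\beta$ copies of the $C_7 \times C_5$ block side by side, exactly as in the Remark preceding this section. Because the two blocks agree on their first three columns, every window of consecutive columns that straddles a seam looks identical to a window occurring inside a single block, so no bicolored $P_4$ is created across a seam, and the cyclic wrap-around in the $C_n$ direction stays consistent.

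By Lemma~\ref{thm-sylvester}, every integer $n \geq (4-1)(5-1) = 12$ can be written as $4\alpha + 5\beta$ with $\alpha,\beta\geq 0$; moreover $8 = 4\cdot 2$, $9 = 4+5$, and $10 = 5\cdot 2$ are also of this form, so the tiling argument disposes of all $n \geq 7$ except $n = 7$ and $n = 11$, the only remaining values in range not representable by $4$ and $5$. For these two exceptional lengths I would supply explicit $5$-star colorings of $C_7 \times C_7$ and $C_7 \times C_{11}$ in a dedicated figure, in the same style as Figures~\ref{C4} and~\ref{C5}. Combining the constructions with the lower bound of Lemma~\ref{th-atleast5} then yields $\chi_s(C_7 \times C_n) = 5$ for all $n \geq 7$.

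I expect the main obstacle to be the construction step rather than the counting. One must actually produce the two base patterns so that (i) each is a genuine $5$-star coloring of the respective tensor product, (ii) they share three identical leading columns, and (iii) the shared portion is wide enough that no bicolored $P_4$ — which under the diagonal adjacency can span up to four consecutive columns — is introduced at a block boundary or at the cyclic seam that closes the $C_n$ factor. Verifying~(iii), together with checking the two hand-built colorings for $n=7$ and $n=11$, is the only genuinely case-based part of the argument; everything else is the routine Frobenius bookkeeping already rehearsed in the preceding lemmas.
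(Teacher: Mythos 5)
Your proposal is correct and follows essentially the same route as the paper: lower bound from Lemma~\ref{th-atleast5}, base patterns for $C_7\times C_4$ and $C_7\times C_5$ sharing their first three columns, the Frobenius argument of Lemma~\ref{thm-sylvester} to cover $n\geq 12$ and $n\in\{8,9,10\}$, and explicit colorings for the two non-representable lengths $n=7$ and $n=11$. The paper likewise presents exactly these four patterns in its Fig.~\ref{C7} and concludes in the same way.
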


\begin{proof}
 By Lemma~\ref{thm-sylvester}, every positive integer greater than or equal to $12$ can be expressed as an integer linear combination of $4$ and $5$.
 As first three columns of $C_7 \times C_4$ and $C_7 \times C_5$ are identical (see Fig.~\ref{C7}), by selecting suitable copies of colorings of $C_7 \times C_4$ and $C_7 \times C_5$, we can obtain a $5$-star coloring of $C_7 \times C_n$ for $n \geq 12$. As every integer $n \in \{8,9,10\}$ can be expressed as an integer linear combination of $4$ and $5$, we get a $5$-star coloring of $C_7 \times C_n$ for $n \in \{8,9,10\}$. $5$-star colorings of $C_7 \times C_7$ and $C_7 \times C_{11}$ are given in the Fig.~\ref{C7}. Thus, by considering Fig.~\ref{C7} and using Lemma \ref{th-atleast5}, the proof is complete. \qed
 
\end{proof}

\begin{figure}[ht]
 \centering
\begin{tikzpicture}[scale=.02, transform shape]
\node [draw, shape=circle] (v1) at (-560,-20) {};
\node [draw, shape=circle] (v2) at (-480,-20) {};
\node [draw, shape=circle] (v3) at (-560,-160) {};
\node [draw, shape=circle] (v4) at (-480,-160) {};

\node [scale=50] at (-550,-30) {4};
\node [scale=50] at (-550,-50) {3};
\node [scale=50] at (-550,-70) {5};
\node [scale=50] at (-550,-90) {2};
\node [scale=50] at (-550,-110) {4};
\node [scale=50] at (-550,-130) {3};
\node [scale=50] at (-550,-150) {5};

\node [scale=50] at (-530,-30) {1};
\node [scale=50] at (-530,-50) {1};
\node [scale=50] at (-530,-70) {1};
\node [scale=50] at (-530,-90) {1};
\node [scale=50] at (-530,-110) {1};
\node [scale=50] at (-530,-130) {1};
\node [scale=50] at (-530,-150) {2};

\node [scale=50] at (-510,-30) {3};
\node [scale=50] at (-510,-50) {4};
\node [scale=50] at (-510,-70) {2};
\node [scale=50] at (-510,-90) {5};
\node [scale=50] at (-510,-110) {3};
\node [scale=50] at (-510,-130) {4};
\node [scale=50] at (-510,-150) {2};

\node [scale=50] at (-490,-30) {1};
\node [scale=50] at (-490,-50) {1};
\node [scale=50] at (-490,-70) {1};
\node [scale=50] at (-490,-90) {1};
\node [scale=50] at (-490,-110) {1};
\node [scale=50] at (-490,-130) {1};
\node [scale=50] at (-490,-150) {5};

\draw (v1)--(v2)--(v4)--(v3)--(v1);
\node [scale=50] at (-520,-5) {$C_7 \times C_4$};


\node [draw, shape=circle] (u1) at (-460,-20) {};
\node [draw, shape=circle] (u2) at (-360,-20) {};
\node [draw, shape=circle] (u3) at (-460,-160) {};
\node [draw, shape=circle] (u4) at (-360,-160) {};

\node [scale=50] at (-450,-30) {4};
\node [scale=50] at (-450,-50) {3};
\node [scale=50] at (-450,-70) {5};
\node [scale=50] at (-450,-90) {2};
\node [scale=50] at (-450,-110) {4};
\node [scale=50] at (-450,-130) {3};
\node [scale=50] at (-450,-150) {5};

\node [scale=50] at (-430,-30) {1};
\node [scale=50] at (-430,-50) {1};
\node [scale=50] at (-430,-70) {1};
\node [scale=50] at (-430,-90) {1};
\node [scale=50] at (-430,-110) {1};
\node [scale=50] at (-430,-130) {1};
\node [scale=50] at (-430,-150) {2};

\node [scale=50] at (-410,-30) {3};
\node [scale=50] at (-410,-50) {4};
\node [scale=50] at (-410,-70) {2};
\node [scale=50] at (-410,-90) {5};
\node [scale=50] at (-410,-110) {3};
\node [scale=50] at (-410,-130) {4};
\node [scale=50] at (-410,-150) {2};

\node [scale=50] at (-390,-30) {1};
\node [scale=50] at (-390,-50) {4};
\node [scale=50] at (-390,-70) {1};
\node [scale=50] at (-390,-90) {1};
\node [scale=50] at (-390,-110) {3};
\node [scale=50] at (-390,-130) {1};
\node [scale=50] at (-390,-150) {1};

\node [scale=50] at (-370,-30) {2};
\node [scale=50] at (-370,-50) {3};
\node [scale=50] at (-370,-70) {5};
\node [scale=50] at (-370,-90) {2};
\node [scale=50] at (-370,-110) {4};
\node [scale=50] at (-370,-130) {2};
\node [scale=50] at (-370,-150) {5};

\draw (u1)--(u2)--(u4)--(u3)--(u1);
\node [scale=50] at (-410,-5) {$C_7 \times C_5$};


\node [draw, shape=circle] (u1) at (-340,-20) {};
\node [draw, shape=circle] (u2) at (-200,-20) {};
\node [draw, shape=circle] (u3) at (-340,-160) {};
\node [draw, shape=circle] (u4) at (-200,-160) {};

\node [scale=50] at (-330,-30) {1};
\node [scale=50] at (-330,-50) {5};
\node [scale=50] at (-330,-70) {1};
\node [scale=50] at (-330,-90) {2};
\node [scale=50] at (-330,-110) {5};
\node [scale=50] at (-330,-130) {1};
\node [scale=50] at (-330,-150) {2};

\node [scale=50] at (-310,-30) {1};
\node [scale=50] at (-310,-50) {5};
\node [scale=50] at (-310,-70) {1};
\node [scale=50] at (-310,-90) {2};
\node [scale=50] at (-310,-110) {5};
\node [scale=50] at (-310,-130) {1};
\node [scale=50] at (-310,-150) {2};

\node [scale=50] at (-290,-30) {1};
\node [scale=50] at (-290,-50) {3};
\node [scale=50] at (-290,-70) {1};
\node [scale=50] at (-290,-90) {4};
\node [scale=50] at (-290,-110) {3};
\node [scale=50] at (-290,-130) {1};
\node [scale=50] at (-290,-150) {4};

\node [scale=50] at (-270,-30) {1};
\node [scale=50] at (-270,-50) {3};
\node [scale=50] at (-270,-70) {1};
\node [scale=50] at (-270,-90) {4};
\node [scale=50] at (-270,-110) {3};
\node [scale=50] at (-270,-130) {1};
\node [scale=50] at (-270,-150) {4};

\node [scale=50] at (-250,-30) {1};
\node [scale=50] at (-250,-50) {5};
\node [scale=50] at (-250,-70) {2};
\node [scale=50] at (-250,-90) {2};
\node [scale=50] at (-250,-110) {5};
\node [scale=50] at (-250,-130) {2};
\node [scale=50] at (-250,-150) {2};

\node [scale=50] at (-230,-30) {1};
\node [scale=50] at (-230,-50) {5};
\node [scale=50] at (-230,-70) {1};
\node [scale=50] at (-230,-90) {4};
\node [scale=50] at (-230,-110) {5};
\node [scale=50] at (-230,-130) {1};
\node [scale=50] at (-230,-150) {5};

\node [scale=50] at (-210,-30) {4};
\node [scale=50] at (-210,-50) {3};
\node [scale=50] at (-210,-70) {3};
\node [scale=50] at (-210,-90) {4};
\node [scale=50] at (-210,-110) {3};
\node [scale=50] at (-210,-130) {3};
\node [scale=50] at (-210,-150) {4};

\draw (u1)--(u2)--(u4)--(u3)--(u1);
\node [scale=50] at (-270,-5) {$C_7 \times C_7$};


\node [draw, shape=circle] (u1) at (-180,-20) {};
\node [draw, shape=circle] (u2) at (40,-20) {};
\node [draw, shape=circle] (u3) at (-180,-160) {};
\node [draw, shape=circle] (u4) at (40,-160) {};

\node [scale=50] at (-170,-30) {1};
\node [scale=50] at (-170,-50) {1};
\node [scale=50] at (-170,-70) {1};
\node [scale=50] at (-170,-90) {1};
\node [scale=50] at (-170,-110) {1};
\node [scale=50] at (-170,-130) {1};
\node [scale=50] at (-170,-150) {4};

\node [scale=50] at (-150,-30) {5};
\node [scale=50] at (-150,-50) {5};
\node [scale=50] at (-150,-70) {3};
\node [scale=50] at (-150,-90) {3};
\node [scale=50] at (-150,-110) {5};
\node [scale=50] at (-150,-130) {5};
\node [scale=50] at (-150,-150) {3};

\node [scale=50] at (-130,-30) {1};
\node [scale=50] at (-130,-50) {1};
\node [scale=50] at (-130,-70) {1};
\node [scale=50] at (-130,-90) {1};
\node [scale=50] at (-130,-110) {2};
\node [scale=50] at (-130,-130) {1};
\node [scale=50] at (-130,-150) {3};

\node [scale=50] at (-110,-30) {2};
\node [scale=50] at (-110,-50) {2};
\node [scale=50] at (-110,-70) {4};
\node [scale=50] at (-110,-90) {4};
\node [scale=50] at (-110,-110) {2};
\node [scale=50] at (-110,-130) {4};
\node [scale=50] at (-110,-150) {4};

\node [scale=50] at (-90,-30) {5};
\node [scale=50] at (-90,-50) {5};
\node [scale=50] at (-90,-70) {3};
\node [scale=50] at (-90,-90) {3};
\node [scale=50] at (-90,-110) {5};
\node [scale=50] at (-90,-130) {5};
\node [scale=50] at (-90,-150) {3};

\node [scale=50] at (-70,-30) {1};
\node [scale=50] at (-70,-50) {1};
\node [scale=50] at (-70,-70) {1};
\node [scale=50] at (-70,-90) {1};
\node [scale=50] at (-70,-110) {2};
\node [scale=50] at (-70,-130) {1};
\node [scale=50] at (-70,-150) {3};

\node [scale=50] at (-50,-30) {2};
\node [scale=50] at (-50,-50) {2};
\node [scale=50] at (-50,-70) {4};
\node [scale=50] at (-50,-90) {4};
\node [scale=50] at (-50,-110) {2};
\node [scale=50] at (-50,-130) {5};
\node [scale=50] at (-50,-150) {4};

\node [scale=50] at (-30,-30) {1};
\node [scale=50] at (-30,-50) {1};
\node [scale=50] at (-30,-70) {1};
\node [scale=50] at (-30,-90) {1};
\node [scale=50] at (-30,-110) {1};
\node [scale=50] at (-30,-130) {1};
\node [scale=50] at (-30,-150) {4};

\node [scale=50] at (-10,-30) {5};
\node [scale=50] at (-10,-50) {5};
\node [scale=50] at (-10,-70) {3};
\node [scale=50] at (-10,-90) {3};
\node [scale=50] at (-10,-110) {5};
\node [scale=50] at (-10,-130) {5};
\node [scale=50] at (-10,-150) {3};

\node [scale=50] at (10,-30) {1};
\node [scale=50] at (10,-50) {1};
\node [scale=50] at (10,-70) {1};
\node [scale=50] at (10,-90) {1};
\node [scale=50] at (10,-110) {2};
\node [scale=50] at (10,-130) {1};
\node [scale=50] at (10,-150) {3};

\node [scale=50] at (30,-30) {2};
\node [scale=50] at (30,-50) {2};
\node [scale=50] at (30,-70) {4};
\node [scale=50] at (30,-90) {4};
\node [scale=50] at (30,-110) {2};
\node [scale=50] at (30,-130) {5};
\node [scale=50] at (30,-150) {4};

\draw (u1)--(u2)--(u4)--(u3)--(u1);
\node [scale=50] at (-70,-5) {$C_7 \times C_{11}$};

\end{tikzpicture}
  \caption{~$5$-star colorings of $C_7 \times C_n$, $n \in \{4,5,7,11\}$}\label{C7}
\end{figure}

\begin{lemma}\label{th-C11}
 For every positive integer $n \geq 11$, we have $\chi_s(C_{11} \times C_n)=5$.
\end{lemma}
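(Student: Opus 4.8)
The plan is to follow the same template as Lemmas~\ref{th-c4cn}, \ref{th-c5cn} and \ref{C7Cn}. The lower bound is immediate: by Lemma~\ref{th-atleast5} we have $\chi_s(C_{11} \times C_n) \geq 5$ for every $n \geq 3$, so the entire task is to exhibit a $5$-star coloring of $C_{11} \times C_n$ for each $n \geq 11$. For the upper bound I would first apply Lemma~\ref{thm-sylvester} to the coprime pair $(4,5)$: since $(5-1)(4-1)=12$, every integer $n \geq 12$ can be written as $n = 4\alpha + 5\beta$ with $\alpha,\beta \geq 0$.

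Next I would display, in a figure analogous to Fig.~\ref{C7}, one $5$-star coloring of $C_{11} \times C_4$ and one of $C_{11} \times C_5$ constructed so that their first three columns coincide; that $5$-colorings of these two graphs exist at all is guaranteed by Lemmas~\ref{th-c4cn} and \ref{th-c5cn}, and the only extra demand is to choose representatives agreeing on three consecutive columns. Writing $X_0,X_1,X_2$ for the shared columns, the two blocks read cyclically as $X_0X_1X_2A_3$ and $X_0X_1X_2B_3B_4$. Arranging $\alpha$ copies of the first block and $\beta$ copies of the second around the cycle yields a candidate coloring of $C_{11} \times C_n$. The key point is that a $P_4$ of a tensor product of cycles occupies at most four consecutive columns, because each edge shifts the column index by exactly one; and since every block begins with $X_0X_1X_2$, each window of four consecutive columns straddling a seam is a subgraph of one of the two (valid) blocks $C_{11}\times C_4$ or $C_{11}\times C_5$. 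Hence, by the same reasoning as in Lemma~\ref{lem-subgraph}, no bicolored $P_4$ is introduced, and the construction settles all $n \geq 12$.

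The only value $n \geq 11$ not representable by $(4,5)$ is $n=11$ itself, which is exactly the Frobenius number of $\{4,5\}$. For this residual case I would add an explicit $5$-star coloring of $C_{11}\times C_{11}$ to the same figure. (Alternatively, since $11 = 4+7$, one could instead exhibit a $5$-star coloring of $C_{11}\times C_7$ whose first three columns match those of $C_{11}\times C_4$ and glue one block of each, reusing $\chi_s(C_{11}\times C_7)=5$ from Lemma~\ref{C7Cn}.) Combining the constructions for $n=11$ and $n \geq 12$ with the lower bound gives $\chi_s(C_{11}\times C_n)=5$ for all $n \geq 11$.

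The main obstacle is the explicit constructions rather than the combinatorial skeleton. The delicate point in the gluing step is verifying the seams: one must confirm that every four-column window created at a block boundary reproduces a legal window of the $C_4$- or $C_5$-block, which is precisely what the ``first three columns identical'' condition secures. The genuinely laborious part is the residual $n=11$ case, where a $5$-star coloring of the $121$-vertex graph $C_{11}\times C_{11}$ must be produced and then checked to be proper with every path on four vertices receiving at least three colors; this verification is most reliably carried out by an exhaustive computer search, after which the resulting pattern is recorded in the figure.
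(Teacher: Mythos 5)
Your proposal is correct and follows essentially the same route as the paper: the paper also combines the lower bound from Lemma~\ref{th-atleast5} with Lemma~\ref{thm-sylvester} applied to the pair $(4,5)$, glues copies of the colorings of $C_4\times C_{11}$ and $C_5\times C_{11}$ already displayed in Figs.~\ref{C4} and~\ref{C5} (whose first three rows coincide) to handle all $n\geq 12$, and settles the residual case $n=11$ with an explicit $5$-star coloring of $C_{11}\times C_{11}$ (Fig.~\ref{C11}). The only cosmetic difference is that the paper reuses the transposed blocks it has already exhibited rather than constructing fresh $C_{11}\times C_4$ and $C_{11}\times C_5$ colorings with matching columns.
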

\begin{proof}
By Lemma~\ref{thm-sylvester}, every positive integer greater than or equal to $12$ can be expressed as an integer linear combination of $4$ and $5$.
 As first three rows of $C_4 \times C_{11}$ (see Fig.~\ref{C4}) and $C_5 \times C_{11}$ (see Fig.~\ref{C5}) are identical, by selecting suitable copies of colorings of $C_4 \times C_{11}$ and $C_5 \times C_{11}$, we can obtain a $5$-star coloring of $C_{11} \times C_n$ for $n \geq 12$.
 For $n=11$ we have given a $5$-star coloring of $C_{11} \times C_{11}$ in the Fig.~\ref{C11}. Now, by using Lemma \ref{th-atleast5}, the proof is complete. \qed

\end{proof}



\begin{figure}[ht]
 \centering
\begin{tikzpicture}[scale=.02, transform shape]

\node [draw, shape=circle] (u1) at (-340,-20) {};
\node [draw, shape=circle] (u2) at (-120,-20) {};
\node [draw, shape=circle] (u3) at (-340,-240) {};
\node [draw, shape=circle] (u4) at (-120,-240) {};

\node [scale=50] at (-330,-30) {1};
\node [scale=50] at (-330,-50) {3};
\node [scale=50] at (-330,-70) {1};
\node [scale=50] at (-330,-90) {5};
\node [scale=50] at (-330,-110) {1};
\node [scale=50] at (-330,-130) {3};
\node [scale=50] at (-330,-150) {1};
\node [scale=50] at (-330,-170) {5};
\node [scale=50] at (-330,-190) {1};
\node [scale=50] at (-330,-210) {3};
\node [scale=50] at (-330,-230) {5};

\node [scale=50] at (-310,-30) {1};
\node [scale=50] at (-310,-50) {2};
\node [scale=50] at (-310,-70) {1};
\node [scale=50] at (-310,-90) {4};
\node [scale=50] at (-310,-110) {1};
\node [scale=50] at (-310,-130) {2};
\node [scale=50] at (-310,-150) {1};
\node [scale=50] at (-310,-170) {4};
\node [scale=50] at (-310,-190) {1};
\node [scale=50] at (-310,-210) {2};
\node [scale=50] at (-310,-230) {4};

\node [scale=50] at (-290,-30) {1};
\node [scale=50] at (-290,-50) {2};
\node [scale=50] at (-290,-70) {1};
\node [scale=50] at (-290,-90) {4};
\node [scale=50] at (-290,-110) {1};
\node [scale=50] at (-290,-130) {2};
\node [scale=50] at (-290,-150) {1};
\node [scale=50] at (-290,-170) {4};
\node [scale=50] at (-290,-190) {1};
\node [scale=50] at (-290,-210) {2};
\node [scale=50] at (-290,-230) {4};

\node [scale=50] at (-270,-30) {1};
\node [scale=50] at (-270,-50) {3};
\node [scale=50] at (-270,-70) {1};
\node [scale=50] at (-270,-90) {5};
\node [scale=50] at (-270,-110) {1};
\node [scale=50] at (-270,-130) {3};
\node [scale=50] at (-270,-150) {1};
\node [scale=50] at (-270,-170) {5};
\node [scale=50] at (-270,-190) {1};
\node [scale=50] at (-270,-210) {3};
\node [scale=50] at (-270,-230) {5};

\node [scale=50] at (-250,-30) {2};
\node [scale=50] at (-250,-50) {3};
\node [scale=50] at (-250,-70) {4};
\node [scale=50] at (-250,-90) {5};
\node [scale=50] at (-250,-110) {2};
\node [scale=50] at (-250,-130) {3};
\node [scale=50] at (-250,-150) {4};
\node [scale=50] at (-250,-170) {5};
\node [scale=50] at (-250,-190) {1};
\node [scale=50] at (-250,-210) {3};
\node [scale=50] at (-250,-230) {5};

\node [scale=50] at (-230,-30) {1};
\node [scale=50] at (-230,-50) {3};
\node [scale=50] at (-230,-70) {1};
\node [scale=50] at (-230,-90) {5};
\node [scale=50] at (-230,-110) {1};
\node [scale=50] at (-230,-130) {3};
\node [scale=50] at (-230,-150) {1};
\node [scale=50] at (-230,-170) {5};
\node [scale=50] at (-230,-190) {1};
\node [scale=50] at (-230,-210) {2};
\node [scale=50] at (-230,-230) {4};

\node [scale=50] at (-210,-30) {1};
\node [scale=50] at (-210,-50) {2};
\node [scale=50] at (-210,-70) {1};
\node [scale=50] at (-210,-90) {4};
\node [scale=50] at (-210,-110) {1};
\node [scale=50] at (-210,-130) {2};
\node [scale=50] at (-210,-150) {1};
\node [scale=50] at (-210,-170) {4};
\node [scale=50] at (-210,-190) {1};
\node [scale=50] at (-210,-210) {2};
\node [scale=50] at (-210,-230) {4};

\node [scale=50] at (-190,-30) {1};
\node [scale=50] at (-190,-50) {2};
\node [scale=50] at (-190,-70) {1};
\node [scale=50] at (-190,-90) {4};
\node [scale=50] at (-190,-110) {1};
\node [scale=50] at (-190,-130) {2};
\node [scale=50] at (-190,-150) {1};
\node [scale=50] at (-190,-170) {4};
\node [scale=50] at (-190,-190) {1};
\node [scale=50] at (-190,-210) {3};
\node [scale=50] at (-190,-230) {5};

\node [scale=50] at (-170,-30) {1};
\node [scale=50] at (-170,-50) {3};
\node [scale=50] at (-170,-70) {1};
\node [scale=50] at (-170,-90) {5};
\node [scale=50] at (-170,-110) {1};
\node [scale=50] at (-170,-130) {3};
\node [scale=50] at (-170,-150) {1};
\node [scale=50] at (-170,-170) {5};
\node [scale=50] at (-170,-190) {1};
\node [scale=50] at (-170,-210) {3};
\node [scale=50] at (-170,-230) {5};

\node [scale=50] at (-150,-30) {2};
\node [scale=50] at (-150,-50) {3};
\node [scale=50] at (-150,-70) {4};
\node [scale=50] at (-150,-90) {5};
\node [scale=50] at (-150,-110) {2};
\node [scale=50] at (-150,-130) {3};
\node [scale=50] at (-150,-150) {4};
\node [scale=50] at (-150,-170) {5};
\node [scale=50] at (-150,-190) {2};
\node [scale=50] at (-150,-210) {4};
\node [scale=50] at (-150,-230) {4};

\node [scale=50] at (-130,-30) {2};
\node [scale=50] at (-130,-50) {3};
\node [scale=50] at (-130,-70) {4};
\node [scale=50] at (-130,-90) {5};
\node [scale=50] at (-130,-110) {2};
\node [scale=50] at (-130,-130) {3};
\node [scale=50] at (-130,-150) {4};
\node [scale=50] at (-130,-170) {5};
\node [scale=50] at (-130,-190) {2};
\node [scale=50] at (-130,-210) {3};
\node [scale=50] at (-130,-230) {5};

\draw (u1)--(u2)--(u4)--(u3)--(u1);
\node [scale=50] at (-230,-5) {$C_{11} \times C_{11}$};

\end{tikzpicture}
  \caption{~A $5$-star coloring of $C_{11} \times C_{11}$.}\label{C11}
\end{figure}


\begin{lemma}\label{th-C6C8C9C10}
 For every positive integer $n \geq m$, where $m \in \{6,8,9,10\}$ we have $\chi_s(C_{m} \times C_n)=5$.
\end{lemma}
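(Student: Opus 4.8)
The plan is to derive the value $5$ by pairing the universal lower bound of Lemma~\ref{th-atleast5} with the pattern-repetition Remark, so that each of the four values $m\in\{6,8,9,10\}$ collapses to a base case that has already been settled. Since $m,n\ge 3$ whenever $m\in\{6,8,9,10\}$ and $n\ge m$, Lemma~\ref{th-atleast5} immediately gives $\chi_s(C_m\times C_n)\ge 5$; the entire remaining task is to produce a matching $5$-star coloring, and I would do so without drawing any new pictures.

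For the upper bound I would exploit the fact that every one of these four values is a proper multiple of a cycle length already handled, namely $6=2\cdot 3$, $8=2\cdot 4$, $9=3\cdot 3$, and $10=2\cdot 5$. For each factorization $m=p\,m_0$ with $m_0\in\{3,4,5\}$ I apply the Remark with horizontal multiplier $q=1$: repeating a $5$-star coloring of $C_{m_0}\times C_n$ exactly $p$ times in the vertical direction yields a $5$-star coloring of $C_{p m_0}\times C_n=C_m\times C_n$. It then only remains to confirm that the needed base colorings exist over the whole range $n\ge m$. Because $n\ge m\ge 6>5$, the delicate exception $C_3\times C_5$ never intervenes: for $m_0=3$ (the cases $m\in\{6,9\}$) Lemma~\ref{th-c3cn} gives $\chi_s(C_3\times C_n)=5$ for all $n\ge 6$; for $m_0=4$ (the case $m=8$) Lemma~\ref{th-c4cn} gives $\chi_s(C_4\times C_n)=5$ for all $n\ge 8$; and for $m_0=5$ (the case $m=10$) Lemma~\ref{th-c5cn} gives $\chi_s(C_5\times C_n)=5$ for all $n\ge 10$. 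In each case the Remark upgrades this to $\chi_s(C_m\times C_n)\le 5$, and together with the lower bound we conclude $\chi_s(C_m\times C_n)=5$.

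The only delicate point — and the step I would expect to need the most care if the Remark were not already available — is verifying that the vertical repetition cannot create a bicolored $P_4$ across a seam between two copies. Because edges of $C_m\times C_n$ move by $\pm 1$ in each coordinate, any $P_4$ occupies at most four consecutive rows; reducing the first coordinate modulo $m_0$ sends such a seam-crossing $P_4$ to a four-vertex walk in the base $C_{m_0}\times C_n$, which is either a genuine $P_4$ or (when the four-row band wraps around the short cycle, as can happen only for $m_0=3$) a triangle with a repeated endpoint. In the first case the base star coloring already forbids two colors, and in the second case proper coloring of the triangle forces three distinct colors among its vertices, the fourth vertex merely repeating the first; either way the tiled $P_4$ sees at least three colors. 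This is precisely the content of the Remark, so granting it, the argument above is immediate and, in particular, needs no new explicit colorings.
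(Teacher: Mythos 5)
Your proposal is correct and follows essentially the same route as the paper: the lower bound comes from Lemma~\ref{th-atleast5}, and the upper bound is obtained by vertically repeating the $5$-star colorings of $C_3\times C_n$ (for $m\in\{6,9\}$), $C_4\times C_n$ (for $m=8$), and $C_5\times C_n$ (for $m=10$) via the pattern-repetition Remark, with the condition $n\ge m\ge 6$ keeping you clear of the $C_3\times C_5$ exception. Your added justification that a seam-crossing $P_4$ projects to either a genuine $P_4$ or a triangle in the base graph is a welcome elaboration of the Remark, which the paper states without proof.
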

\begin{proof}
For every natural number $n \geq m$, where $m \in \{6,9\}$, we get  $5$-star colorings of $C_6 \times C_n$ and $C_9 \times C_n$ from the $5$-star coloring of $C_3 \times C_n$. 
We get $5$-star colorings of $C_8 \times C_n$ and $C_{10} \times C_n$ from the 5-star colorings of $C_4 \times C_n$ and $C_5 \times C_n$ respectively. Now, by using Lemma \ref{th-atleast5}, the proof is complete. \qed
     
\end{proof}

 \begin{lemma}
 For every pair of positive integers $m$ and $n$, where $12 \leq m \leq n$,  we have 
 $\chi_s(C_m \times C_n)=5$
 \end{lemma}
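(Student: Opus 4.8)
The lower bound $\chi_s(C_m \times C_n) \ge 5$ is immediate from Lemma~\ref{th-atleast5}, so the entire task is to exhibit a $5$-star coloring of $C_m \times C_n$ when $12 \le m \le n$. Since then both $m,n \ge 12 = (4-1)(5-1)$, Lemma~\ref{thm-sylvester} (with the coprime pair $4,5$) lets me write $m = 4\alpha + 5\beta$ for non-negative integers $\alpha,\beta$. The plan, mirroring Lemma~\ref{th-C11}, is to build $C_m \times C_n$ by stacking in the first-cycle direction: take $\alpha$ copies of a $5$-star coloring of $C_4 \times C_n$ on top of $\beta$ copies of a $5$-star coloring of $C_5 \times C_n$, giving an $m \times n$ pattern. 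Such ingredient colorings exist by Lemmas~\ref{th-c4cn} and~\ref{th-c5cn}, as $n \ge 12 > 5$.

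For the stacking to be legitimate I would fix the two ingredients so that the $4 \times n$ pattern of $C_4 \times C_n$ and the $5 \times n$ pattern of $C_5 \times C_n$ agree on their first three rows, exactly the property verified for $n=11$ in Fig.~\ref{C4} and Fig.~\ref{C5}. Writing $A$ for the $C_4$-pattern with rows $a_1,a_2,a_3,a_4$ and $B$ for the $C_5$-pattern with rows $b_1,\dots,b_5$, this says $a_i = b_i$ for $i \in \{1,2,3\}$; moreover each pattern, read cyclically in the first coordinate, is by hypothesis a genuine $5$-star coloring of $C_4 \times C_n$, respectively $C_5 \times C_n$.

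The verification I would then carry out is a local window argument. Any edge of $C_m \times C_n$ changes the first coordinate by exactly $\pm 1$, so a $P_4$ meets at most four consecutive rows; hence it suffices to check that every block of four consecutive rows of the stacked pattern coincides, as a full-width colored strip, with four consecutive rows of one of the two cyclic ingredient colorings. Windows lying inside a single copy are trivially of this form. At a seam the shared rows do the work: for a $C_4$-block above a $C_5$-block the straddling windows $(a_3,a_4,b_1,b_2)$ and $(a_4,b_1,b_2,b_3)$ equal $(a_3,a_4,a_1,a_2)$ and $(a_4,a_1,a_2,a_3)$, both windows of the cyclic $C_4 \times C_n$ coloring; the symmetric computation covers $C_5$-above-$C_4$, $C_4$-above-$C_4$ and $C_5$-above-$C_5$ seams, including the cyclic wrap of $C_m$. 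Since every $4 \times n$ strip of the stacked coloring is thus a verbatim copy of a $4 \times n$ strip of a coloring already known to be star, the stacked coloring is proper and free of bicolored $P_4$'s, i.e.\ a $5$-star coloring; with Lemma~\ref{th-atleast5} this yields $\chi_s(C_m \times C_n) = 5$.

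The main obstacle is the middle step: guaranteeing, for every $n \ge 12$, that the $C_4 \times C_n$ and $C_5 \times C_n$ colorings can be chosen to share three complete rows. The figures display this only for particular small $n$, so I would make the column-periodic template of the top three rows explicit and confirm that it is independent of whether the first cycle has length $4$ or $5$, and that it survives the horizontal stitching by $C_4$- and $C_5$-width column blocks that realizes an arbitrary $n = 4\gamma + 5\delta$. Once this synchronization of the shared rows is established, the window argument above is routine and completes the proof.
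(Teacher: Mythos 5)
Your overall strategy is the same as the paper's: both reduce the problem, via Lemma~\ref{thm-sylvester}, to gluing periodic $5$-star-colored blocks of heights $4$ and $5$, and your seam/window analysis is in fact a more careful justification of the step the paper dispatches with ``by selecting suitable copies.'' The difference is only organizational: the paper tiles the $m\times n$ torus directly with the four blocks $C_4\times C_4$, $C_4\times C_5$, $C_5\times C_4$, $C_5\times C_5$ of Fig.~\ref{C4} and Fig.~\ref{C5}, which are made compatible in both directions (cf.\ Fig.~\ref{fig:block}), whereas you first invoke Lemmas~\ref{th-c4cn} and~\ref{th-c5cn} to obtain full-width strips $C_4\times C_n$ and $C_5\times C_n$ and then stack them vertically, as in Lemma~\ref{th-C11}.

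The gap is the one you flag yourself, and as written it is genuine: Lemmas~\ref{th-c4cn} and~\ref{th-c5cn} assert only that \emph{some} $5$-star colorings of $C_4\times C_n$ and $C_5\times C_n$ exist; nothing entitles you to assume the two can be chosen to agree on their first three rows, and without that (or an equivalent compatibility) the seam analysis has nothing to stand on. Moreover, the condition you posit is not the one the paper's data satisfies: reading off Fig.~\ref{C4} and Fig.~\ref{C5}, the height-$4$ and height-$5$ basic blocks share their first two rows and their last row but \emph{not} their third row (row three of $C_4\times C_4$ is $(1,1,1,1)$ while row three of $C_5\times C_4$ is $(1,1,4,1)$), so strips built by horizontal stitching of these blocks will not have three identical top rows. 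Fortunately the weaker condition already suffices, and your window argument adapts verbatim: with $a_1=b_1$, $a_2=b_2$ and $a_4=b_5$, the three windows straddling an $A$-over-$B$ seam become $(a_2,a_3,a_4,a_1)$, $(a_3,a_4,a_1,a_2)$ and $(b_5,b_1,b_2,b_3)$, each a cyclic window of one ingredient, and symmetrically for the other seam types. To close the proof you therefore need to (i) stitch both strips horizontally using the \emph{same} sequence of width-$4$ and width-$5$ column blocks, so that the row compatibility of the basic blocks transfers column-by-column to the strips, and (ii) replace ``first three rows identical'' by ``first two rows and last row identical.'' With those corrections your argument is complete and, if anything, better justified than the paper's.
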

 \begin{proof}
 By Lemma~\ref{thm-sylvester}, every positive integer greater than or equal to $12$ can be expressed as an integer linear combination of $4$ and $5$.
 We have given $5$-star colorings of $C_4 \times C_4$, $C_4 \times C_5$ in Fig.~\ref{C4} and $C_5 \times C_4$ and $C_5 \times C_5$ in Fig.~\ref{C5} such that 
 \begin{itemize}
  \item The colors of the first three columns of $C_4 \times C_4$ and $C_4 \times C_5$ are same. 
  
  \item The colors of the first three columns of $C_5 \times C_4$ and $C_5 \times C_5$ are same. 
  
   \item The colors of the first two rows and the last row of $C_4 \times C_5$ and $C_5 \times C_5$ are the same. 
  
   \item The colors of the first two rows and the last row of $C_5 \times C_4$ and $C_4 \times C_4$ are the same.
 \end{itemize}

 By selecting suitable copies of the colorings of $C_4 \times C_4$, $C_4 \times C_5$, $C_5 \times C_4$ and $C_5 \times C_5$, we can obtain a $5$-star coloring of $C_m \times C_n$ for $12 \leq m \leq n$. For example, a $5$-star coloring of $C_{14} \times C_{19}$ can be obtained from the colorings of $C_4 \times C_4$, $C_4 \times C_5$, $C_5 \times C_4$ and $C_5 \times C_5$ as shown in Fig.~\ref{fig:block}. \qed
 \end{proof}

\begin{figure}
    \centering
    \includegraphics[trim=3cm 16.5cm 6cm 4cm, clip=true, scale=0.8]{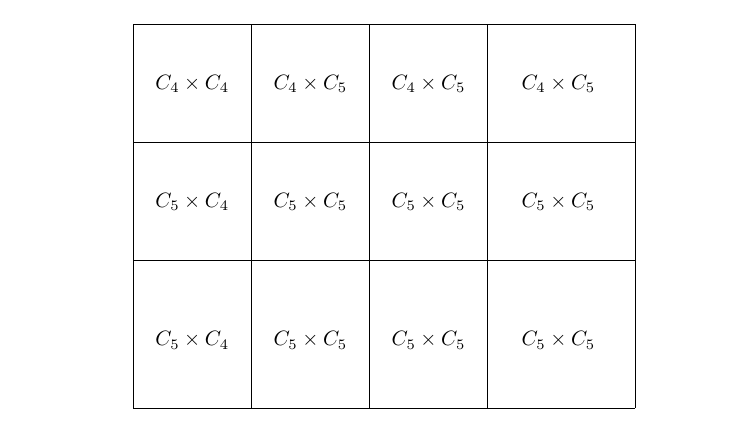}
    \caption{~A $5$-star coloring of $C_{14} \times C_{19}$ can be obtained from the colorings of $C_4 \times C_4$, $C_4 \times C_5$, $C_5 \times C_4$ and $C_5 \times C_5$.}
    \label{fig:block}
\end{figure}


 \subsection{Tensor product of a cycle and a path}\label{sec-CP}
In this subsection, we study star the coloring of the tensor product of a path and a cycle.  In particular, we prove the following theorem.

\begin{theorem}\label{th-CP}
    For every pair of integers $m \geq 3$ and $n \geq 2$, we have
    \[
    \chi_s(C_m \times P_n) = 
    \begin{cases}
        3, & \text{if $m \geq 3$, $n\in \{2,3\}$};\\
        4, & \text{if $m=3k$, $k\in \mathbb{N}$, $n \in \{4,5\}$};\\
        \leq 5, & \text{if $m\neq 3k$, $k\in \mathbb{N}$, $n \in \{4,5\}$};\\
        5, & \text{otherwise.}
    \end{cases}
    \]
\end{theorem}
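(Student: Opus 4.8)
The plan is to establish matching lower and upper bounds in each of the four regimes, reusing subgraph containment (Lemmas~\ref{lem-subgraph} and \ref{lem-1}) together with the unrolling idea behind Lemma~\ref{th-atleast5}. Label the vertices of $C_m \times P_n$ as $(i,j)$ with $i \in \mathbb{Z}_m$, $j \in [n]$, so that $(i,j)\sim(i',j')$ iff $i'\equiv i\pm 1 \pmod m$ and $|j-j'|=1$. The key tool for the lower bounds is a lifting principle: if $f$ is a $k$-star coloring of $C_m \times P_n$, then $f'((i,j))=f((i\bmod m,\,j))$ is a $k$-star coloring of $C_{3m}\times P_n$. Properness is immediate, and for the star condition one checks that the projection $\pi(i,j)=(i\bmod m,\,j)$ sends any $P_4$ to four distinct vertices forming a $P_4$ in $C_m\times P_n$; this is the path analogue of the argument in Lemma~\ref{th-atleast5}, the only points needed being that $m\geq 3$ prevents two $\pm 1$ steps from closing up modulo $m$ and that $C_3\times P_n$ is triangle-free.

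The lower bounds then fall out uniformly. Since $C_m\times P_n$ always contains a $P_4$, we get $\chi_s\geq\chi_s(P_4)=3$ for every $n\geq 2$. For $n\in\{4,5\}$, a hypothetical $3$-star coloring would lift to $C_{3m}\times P_n\supseteq P_{3m}\times P_n\supseteq P_4\times P_4$ (using $3m\geq 4$ and $n\geq 4$), contradicting $\chi_s(P_4\times P_4)=4$ from the preceding theorem; hence $\chi_s\geq 4$. For $n\geq 6$, a hypothetical $4$-star coloring would lift to $C_{3m}\times P_n\supseteq P_{3m}\times P_n$, and the path theorem gives $\chi_s(P_{3m}\times P_n)=5$ whenever $3m\geq 7$ and $n\geq 6$ (via the $P_4\square P_4$ subgraph when $n\geq 7$, and via the $Z$-subgraph of $P_6\times P_{3m}$ when $n=6$); this forces $\chi_s\geq 5$.

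For the upper bounds, when $n=2$ the graph $C_m\times P_2$ is a disjoint union of even cycles, none of length $5$, so $\chi_s=3$ by Lemma~\ref{lem-c}. When $n\geq 6$, and likewise when $m\neq 3k$ with $n\in\{4,5\}$, I pass to the supergraph $C_m\times C_n$ via $P_n\subseteq C_n$: by Lemma~\ref{lem-subgraph} and Theorem~\ref{th-CC}, $\chi_s(C_m\times P_n)\leq\chi_s(C_m\times C_n)=5$, the exceptional pairs $\{3,3\}$ and $\{3,5\}$ being avoided because one factor has length at least $6$ (respectively because $m\neq 3$). The two remaining upper bounds, $\chi_s\leq 3$ for $n=3$ and $\chi_s\leq 4$ for $m=3k$ with $n\in\{4,5\}$, are established by exhibiting explicit colorings displayed as coloring matrices: patterns that are periodic with period $3$ in the cycle direction. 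Such a pattern wraps consistently around $C_m$ exactly when $3\mid m$, which is precisely why the value drops to $4$ only for $m=3k$ and why for $m\neq 3k$ one can assert only $\chi_s\leq 5$.

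The main obstacle is the correctness of these two explicit patterns. Each must be simultaneously proper, consistent under the cyclic wrap, and free of bicolored $P_4$'s, with the subtlety that the boundary rows $j=1$ and $j=n$ have smaller degree than the interior rows and hence interact differently with the star condition. Verifying this reduces to checking the finitely many shapes a $P_4$ can take inside one period of the pattern. Once these colorings are pinned down, combining them with the lower bounds above, the cycle computation for $n=2$, and the containment $P_4\subseteq C_m\times P_n$ closes all four cases.
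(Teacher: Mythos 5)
Your lower-bound machinery is sound and in fact more uniform than the paper's: the lifting $f'((i,j))=f((i\bmod m,\,j))$ from $C_m\times P_n$ to $C_{3m}\times P_n$ does preserve star colorings for $m\geq 3$ (the $\pm 2$ displacement between $v_1$ and $v_3$ cannot vanish modulo $m$, and the odd $j$-displacement separates $v_1$ from $v_4$), and combining it with $P_{3m}\times P_n\supseteq P_4\times P_4$ (resp.\ with $\chi_s(P_6\times P_{3m})=5$ and $\chi_s(P_{3m}\times P_n)=5$) cleanly yields $\chi_s\geq 4$ for $n\in\{4,5\}$ and $\chi_s\geq 5$ for $n\geq 6$. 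This replaces in one stroke the paper's assortment of ad hoc lower-bound arguments (the $Y$- and $Z$-subgraphs, the uniqueness of the colorings of $C_3\times P_3$ and $C_3\times P_5$, and several ``tedious case-by-case'' verifications). The $n=2$ computation and the upper bounds via $C_m\times P_n\subseteq C_m\times C_n$ are also fine.

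The gap is in the upper bound for $n=3$. The theorem asserts $\chi_s(C_m\times P_3)=3$ for \emph{every} $m\geq 3$, but you propose to certify $\chi_s\leq 3$ by a coloring pattern that is periodic with period $3$ in the cycle direction, and you yourself note that such a pattern wraps consistently around $C_m$ exactly when $3\mid m$. So your construction produces nothing for $m=4,5,7,8,\dots$, and the case $n=3$, $3\nmid m$ is left unproved. This cannot be patched by the same device: a single fixed period $p$ can only ever cover $m\equiv 0\pmod p$. The paper's proof of Lemma~\ref{lem-21} instead exhibits $3$-star-colored blocks of heights $4$ and $5$ whose interface rows agree, invokes Lemma~\ref{thm-sylvester} to write every $m\geq 12$ as a nonnegative combination of $4$ and $5$, and supplies separate explicit colorings for the small residual values of $m$. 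You need an ingredient of this kind (two coprime block heights with compatible boundaries, or bespoke patterns for each residue class of $m$) to close the $n=3$ case; as written, the claimed value $3$ for, say, $C_5\times P_3$ has no justification in your argument.
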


The proof of Theorem \ref{th-CP} follows from the following lemmas.
\begin{lemma}
 For every integer $m$, where $m \geq 3$, we have $\chi_s(C_m \times P_2)=3$.
 
\end{lemma}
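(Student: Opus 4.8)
The plan is to identify the isomorphism type of $C_m \times P_2$ explicitly and then read off the answer from Lemma~\ref{lem-c}. First I would fix coordinates: label $V(C_m) = \{0,1,\dots,m-1\}$ with indices taken modulo $m$, and $V(P_2) = \{0,1\}$. By the definition of the tensor product, $(i,s)(j,t) \in E(C_m \times P_2)$ iff $i - j \equiv \pm 1 \pmod m$ and $s \neq t$. Consequently each vertex $(i,s)$ has exactly the two neighbours $(i+1,1-s)$ and $(i-1,1-s)$, so $C_m \times P_2$ is $2$-regular and is therefore a disjoint union of cycles.

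Next I would determine these cycles by walking from $(0,0)$ using the step ``increase the first coordinate by one and flip the second coordinate.'' After $k$ such steps one reaches $(k \bmod m,\, k \bmod 2)$, so the walk first returns to $(0,0)$ at the least $k > 0$ with $m \mid k$ and $2 \mid k$, namely $k = \operatorname{lcm}(m,2)$. Hence if $m$ is even the orbit has length $m$, and the remaining $m$ vertices (those reachable from $(0,1)$) form a second $m$-cycle, giving $C_m \times P_2 \cong C_m \sqcup C_m$; if $m$ is odd the single orbit has length $2m$, giving $C_m \times P_2 \cong C_{2m}$. This is precisely the bipartite double cover of $C_m$, which is connected exactly when $C_m$ is non-bipartite, i.e.\ when $m$ is odd.

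Finally I would conclude using Lemma~\ref{lem-c}. In both cases every connected component is a cycle of \emph{even} length at least $4$; in particular no component is $C_5$, so each component has star chromatic number $3$. Since the star chromatic number of a disjoint union equals the maximum of the star chromatic numbers of its components, $\chi_s(C_m \times P_2) = 3$ for all $m \geq 3$. The lower bound $\chi_s \geq 3$ is in any event forced by the presence of a cycle (indeed of a $P_4$), so the value $3$ is exact.

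The only nontrivial step is the structural identification in the second paragraph: pinning down the first-return time, and hence the cycle lengths, as a function of the parity of $m$. Once this bipartite-double-cover structure is established, the conclusion is immediate from the known values of $\chi_s(C_n)$ recorded in Lemma~\ref{lem-c}.
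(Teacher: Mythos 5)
Your proposal is correct and follows essentially the same route as the paper: identify $C_m \times P_2$ as two disjoint copies of $C_m$ when $m$ is even and as $C_{2m}$ when $m$ is odd, then read off $\chi_s = 3$ from the known star chromatic numbers of cycles. The paper states the structural identification without proof, whereas you justify it via the first-return-time argument, but the underlying argument is the same.
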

\begin{proof}
   If $m$ is even,  the graph $C_m\times P_2$ is a disjoint union of two $C_m$'s and if $m$ is odd, the graph $C_m\times P_2$ is isomorphic to $C_{2m}$. Thus, in both the cases, we have $\chi_s(C_m\times P_2)=3$. \qed
\end{proof}

\begin{lemma}\label{lem-21}
 For every integer $m$, where $m \geq 3$, we have $\chi_s(C_m \times P_3)=3$.
 
\end{lemma}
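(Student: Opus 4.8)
The plan is to prove the two inequalities $\chi_s(C_m\times P_3)\ge 3$ and $\chi_s(C_m\times P_3)\le 3$ separately; the lower bound is immediate and essentially all the work is in the construction.

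For the lower bound I would exhibit a $C_4$ inside $C_m\times P_3$. Writing $V(C_m)=\{u_0,\dots,u_{m-1}\}$ with indices taken mod $m$ and $V(P_3)=\{v_1,v_2,v_3\}$, the four vertices $(u_0,v_1),(u_1,v_2),(u_0,v_3),(u_{m-1},v_2)$ induce a $4$-cycle, since $u_0u_1,u_0u_{m-1}\in E(C_m)$ and $v_1v_2,v_2v_3\in E(P_3)$, and these vertices are distinct because $m\ge 3$. Hence by Lemma~\ref{lem-subgraph} and Lemma~\ref{lem-c}, $\chi_s(C_m\times P_3)\ge\chi_s(C_4)=3$.

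For the upper bound I would first record the structure. Setting $A_i=(u_i,v_1)$, $B_i=(u_i,v_2)$, $C_i=(u_i,v_3)$, the vertices $A_i,C_i$ have the \emph{same} neighbourhood $\{B_{i-1},B_{i+1}\}$ while $B_i$ is adjacent exactly to $A_{i\pm1},C_{i\pm1}$; thus $C_m\times P_3$ is bipartite with parts $\{A_i,C_i\}$ and $\{B_i\}$, and is conveniently viewed as a ``diagonal'' coloring pattern on an $m\times 3$ grid whose rows wrap cyclically. When $3\mid m$ I would color by the cyclic coordinate alone, $c(u_i,v_j)=(i\bmod 3)+1$. This is proper, since adjacent vertices sit in consecutive rows. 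It is also a star coloring: a bicolored $P_4$ would need its row indices $i_1,i_2,i_3,i_4$ (with $i_{t+1}=i_t\pm1$) to satisfy $i_1\equiv i_3$ and $i_2\equiv i_4\pmod 3$, which forces $i_3=i_1$ and $i_2=i_4$, i.e.\ the row pattern $a,b,a,b$; but then the column indices in $P_3$ must obey $|j_t-j_{t+1}|=1$ with $j_1\ne j_3$, which forces $\{j_1,j_3\}=\{1,3\}$ and $j_2=j_4=2$, making the second and fourth vertices coincide. So no genuine bicolored $P_4$ exists, and this case is complete.

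For general $m$ the pure row coloring breaks down: when $3\nmid m$ every proper $3$-coloring of $C_m$ satisfies $c(u_{i-1})=c(u_{i+1})$ at some index, and that defect produces exactly the bicolored $P_4$ ruled out above. Here I would adopt the combination strategy used throughout the paper: construct explicit $3$-star colorings of the linear $\ell\times 3$ strips for $\ell\in\{3,4,5\}$, arranged so that every strip displays the same colors on its first few and last few rows (a common ``interface''), whence concatenating any two strips keeps all proper and all $4$-row $P_4$-windows tri-chromatic at the junction. Since every integer $m\ge 3$ is a non-negative combination of $3,4,5$ (and, via Lemma~\ref{thm-sylvester}, of $4$ and $5$ once $m\ge 12$), wrapping a suitable cyclic sequence of such strips around the $m$ rows yields a $3$-star coloring of $C_m\times P_3$; together with the repetition device (a coloring of $C_m\times P_3$ stacked vertically colors $C_{pm}\times P_3$) this settles all $m$. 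The main obstacle is precisely this $3\nmid m$ construction — producing the small interchangeable strips with a shared interface and verifying the finitely many junction windows — which is the analogue of the ``first three columns identical'' trick in the cycle–cycle results; the lower bound and the $3\mid m$ case require no such effort.
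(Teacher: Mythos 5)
Your lower bound and your $3\mid m$ case are both correct and complete: the explicit $C_4$ through $(u_0,v_1),(u_1,v_2),(u_0,v_3),(u_{m-1},v_2)$ gives $\chi_s(C_m\times P_3)\geq 3$ exactly as the paper intends (the paper merely asserts the $C_4$ subgraph), and your verification that the row coloring $c(u_i,v_j)=(i\bmod 3)+1$ is a star coloring when $3\mid m$ is clean and correct --- the forced row pattern $a,b,a,b$ collapses the second and fourth vertices because $v_2$ is the only vertex of $P_3$ with two neighbours. This reproduces (and argues more carefully than) the paper's $C_3\times P_3$ pattern tiled vertically.

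However, for $3\nmid m$ you have only described a strategy, not carried it out, and that is where essentially all of the content of the upper bound lives. You correctly identify the right plan --- short strips of heights $3,4,5$ with matching interfaces, glued via the numerical semigroup generated by these (Lemma~\ref{thm-sylvester}) --- but you never exhibit a single $3$-star-colorable strip of height $4$ or $5$ with a compatible interface, nor check any junction window; you yourself call this ``the main obstacle.'' The existence of such strips is not obvious (note, for instance, that the paper's $C_3\times P_3$ pattern does \emph{not} share an interface with its height-$4$ and height-$5$ patterns, so the naive ``one common interface for all of $3,4,5$'' may not even be achievable, and the paper instead handles $m\in\{3,6,9,\dots\}$ by pure row coloring and everything else from the height-$4$ and height-$5$ blocks together with ad hoc patterns for $m=7$ and $m=11$). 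The paper closes this gap with the explicit colorings in Fig.~\ref{P3} of $C_m\times P_3$ for $m\in\{3,4,5,7,11\}$, whose first three rows agree for the height-$4$ and height-$5$ patterns. Without those (or equivalent) explicit patterns and the junction verification, the inequality $\chi_s(C_m\times P_3)\leq 3$ remains unproven for all $m$ with $3\nmid m$, so the proof is genuinely incomplete.
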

\begin{proof}
From Lemma \ref{thm-sylvester}, every positive integer greater than or equal to $12$ can be expressed as an integer linear combination of $4$ and $5$. As first three rows of $C_4 \times P_3$ and $C_5 \times P_3$ are identical (see Fig.~\ref{P3}), by selecting suitable copies of colorings of $C_4 \times P_3$ and $C_5 \times P_3$, we can obtain a $3$-star coloring of $C_m \times P_3$ for $n \geq 12$. As every integer $n \in \{6,8,9,10\}$ can be expressed as an integer linear combination of $3$, $4$ and $5$, we get $3$-star coloring of $C_m \times P_3$ for $n \in \{6,8,9,10\}$. $3$-star colorings of $C_3 \times P_3$, $C_7 \times P_3$ and $C_{11} \times P_3$ are given in the Fig.~\ref{P3}. Therefore we have $\chi_s(C_m \times P_3)\leq 3$. As the graph $C_m\times P_3$ contains $C_4$ as  a subgraph, therefore from Lemma \ref{lem-subgraph} and \ref{lem-c}, we have $\chi_s(C_m\times P_3)\geq \chi_s(C_4) = 3$. Altogether we have 
$\chi_s(C_m \times P_3)=3$. \qed

\begin{figure}[ht]
 \centering
\begin{tikzpicture}[scale=.02, transform shape]
\node [draw, shape=circle] (u1) at (-180,-20) {};
\node [draw, shape=circle] (u2) at (-120,-20) {};
\node [draw, shape=circle] (u3) at (-180,-240) {};
\node [draw, shape=circle] (u4) at (-120,-240) {};

\node [scale=50] at (-170,-30) {1};
\node [scale=50] at (-170,-50) {2};
\node [scale=50] at (-170,-70) {1};
\node [scale=50] at (-170,-90) {1};
\node [scale=50] at (-170,-110) {1};
\node [scale=50] at (-170,-130) {1};
\node [scale=50] at (-170,-150) {1};
\node [scale=50] at (-170,-170) {1};
\node [scale=50] at (-170,-190) {1};
\node [scale=50] at (-170,-210) {1};
\node [scale=50] at (-170,-230) {3};

\node [scale=50] at (-150,-30) {1};
\node [scale=50] at (-150,-50) {2};
\node [scale=50] at (-150,-70) {3};
\node [scale=50] at (-150,-90) {3};
\node [scale=50] at (-150,-110) {2};
\node [scale=50] at (-150,-130) {2};
\node [scale=50] at (-150,-150) {3};
\node [scale=50] at (-150,-170) {3};
\node [scale=50] at (-150,-190) {2};
\node [scale=50] at (-150,-210) {2};
\node [scale=50] at (-150,-230) {3};

\node [scale=50] at (-130,-30) {1};
\node [scale=50] at (-130,-50) {2};
\node [scale=50] at (-130,-70) {1};
\node [scale=50] at (-130,-90) {1};
\node [scale=50] at (-130,-110) {1};
\node [scale=50] at (-130,-130) {1};
\node [scale=50] at (-130,-150) {1};
\node [scale=50] at (-130,-170) {1};
\node [scale=50] at (-130,-190) {1};
\node [scale=50] at (-130,-210) {1};
\node [scale=50] at (-130,-230) {3};

\draw (u1)--(u2)--(u4)--(u3)--(u1);
\node [scale=50] at (-150,-5) {$C_{11} \times P_3$};

\node [draw, shape=circle] (u1) at (-280,-100) {};
\node [draw, shape=circle] (u2) at (-220,-100) {};
\node [draw, shape=circle] (u3) at (-280,-240) {};
\node [draw, shape=circle] (u4) at (-220,-240) {};

\node [scale=50] at (-270,-110) {1};
\node [scale=50] at (-270,-130) {2};
\node [scale=50] at (-270,-150) {1};
\node [scale=50] at (-270,-170) {1};
\node [scale=50] at (-270,-190) {1};
\node [scale=50] at (-270,-210) {1};
\node [scale=50] at (-270,-230) {3};

\node [scale=50] at (-250,-110) {1};
\node [scale=50] at (-250,-130) {2};
\node [scale=50] at (-250,-150) {3};
\node [scale=50] at (-250,-170) {3};
\node [scale=50] at (-250,-190) {2};
\node [scale=50] at (-250,-210) {2};
\node [scale=50] at (-250,-230) {3};

\node [scale=50] at (-230,-110) {1};
\node [scale=50] at (-230,-130) {2};
\node [scale=50] at (-230,-150) {1};
\node [scale=50] at (-230,-170) {1};
\node [scale=50] at (-230,-190) {1};
\node [scale=50] at (-230,-210) {1};
\node [scale=50] at (-230,-230) {3};

\draw (u1)--(u2)--(u4)--(u3)--(u1);
\node [scale=50] at (-250,-85) {$C_7 \times P_3$};

\node [draw, shape=circle] (u1) at (-380,-140) {};
\node [draw, shape=circle] (u2) at (-320,-140) {};
\node [draw, shape=circle] (u3) at (-380,-240) {};
\node [draw, shape=circle] (u4) at (-320,-240) {};

\node [scale=50] at (-370,-150) {1};
\node [scale=50] at (-370,-170) {2};
\node [scale=50] at (-370,-190) {1};
\node [scale=50] at (-370,-210) {1};
\node [scale=50] at (-370,-230) {3};

\node [scale=50] at (-350,-150) {1};
\node [scale=50] at (-350,-170) {3};
\node [scale=50] at (-350,-190) {3};
\node [scale=50] at (-350,-210) {2};
\node [scale=50] at (-350,-230) {2};

\node [scale=50] at (-330,-150) {1};
\node [scale=50] at (-330,-170) {2};
\node [scale=50] at (-330,-190) {1};
\node [scale=50] at (-330,-210) {1};
\node [scale=50] at (-330,-230) {3};

\draw (u1)--(u2)--(u4)--(u3)--(u1);
\node [scale=50] at (-350,-125) {$C_5 \times P_3$};

\node [draw, shape=circle] (v1) at (-480,-160) {};
\node [draw, shape=circle] (v2) at (-420,-160) {};
\node [draw, shape=circle] (v3) at (-480,-240) {};
\node [draw, shape=circle] (v4) at (-420,-240) {};

\node [scale=50] at (-470,-170) {1};
\node [scale=50] at (-470,-190) {2};
\node [scale=50] at (-470,-210) {1};
\node [scale=50] at (-470,-230) {2};

\node [scale=50] at (-450,-170) {1};
\node [scale=50] at (-450,-190) {3};
\node [scale=50] at (-450,-210) {3};
\node [scale=50] at (-450,-230) {2};

\node [scale=50] at (-430,-170) {1};
\node [scale=50] at (-430,-190) {2};
\node [scale=50] at (-430,-210) {1};
\node [scale=50] at (-430,-230) {2};

\draw (v1)--(v2)--(v4)--(v3)--(v1);
\node [scale=50] at (-450,-145) {$C_4 \times P_3$};


\node [draw, shape=circle] (v1) at (-570,-180) {};
\node [draw, shape=circle] (v2) at (-510,-180) {};
\node [draw, shape=circle] (v3) at (-570,-240) {};
\node [draw, shape=circle] (v4) at (-510,-240) {};

\node [scale=50] at (-560,-190) {1};
\node [scale=50] at (-560,-210) {2};
\node [scale=50] at (-560,-230) {3};

\node [scale=50] at (-540,-190) {1};
\node [scale=50] at (-540,-210) {2};
\node [scale=50] at (-540,-230) {3};

\node [scale=50] at (-520,-190) {1};
\node [scale=50] at (-520,-210) {2};
\node [scale=50] at (-520,-230) {3};

\draw (v1)--(v2)--(v4)--(v3)--(v1);
\node [scale=50] at (-540,-165) {$C_3 \times P_3$};

\end{tikzpicture}
  \caption{~$5$-star colorings of $C_m \times P_3$, $m \in \{3,4,5,7,11\}$}\label{P3}
\end{figure}

\end{proof}

\begin{lemma}\label{C3kP45}
 For every pair of positive integers $m$ and $n$, where $m\geq 3$, $m=3k$ for some $k \in \mathbb{N}$ and $n \in \{4,5\}$,  we have $\chi_s(C_m \times P_n)=4$.
\end{lemma}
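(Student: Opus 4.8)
The plan is to establish the equality through the two matching bounds, treating the lower bound uniformly and the upper bound by exhibiting explicit base colorings of $C_3\times P_4$ and $C_3\times P_5$ and then lifting them periodically to every $m=3k$. The whole construction hinges on the fact that a star coloring is exactly a proper coloring in which every two color classes induce a star forest, which is the criterion I would use throughout for certifying that no bicolored $P_4$ occurs.

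For the lower bound $\chi_s(C_m\times P_n)\ge 4$, I would realize $P_2\square P_3$ as a subgraph and apply Lemma~\ref{lem-subgraph} together with Lemma~\ref{lem-1} (giving $\chi_s(P_2\square P_3)=4$). When $m=3k\ge 6$ this is immediate, since $C_m$ contains $P_4$, whence $C_m\times P_n\supseteq P_4\times P_n$, and $P_4\times P_n$ already contains $P_2\square P_3$ for $n\ge 4$ exactly as in the path--path case. The remaining case $m=3$ is the only subtle one, because $C_3$ contains no $P_4$; here the triangle's extra chord supplies the grid directly. Writing $V(C_3)=\{0,1,2\}$ and $V(P_4)=\{1,2,3,4\}$, the six vertices $(1,1),(0,2),(2,3)$ (top) and $(2,2),(1,3),(0,4)$ (bottom), with the two horizontal paths and the three vertical rungs, form a copy of $P_2\square P_3$; one checks each required edge is present in the tensor product. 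Since $C_3\times P_4\subseteq C_3\times P_5$, this yields $\chi_s(C_3\times P_n)\ge 4$ for $n\in\{4,5\}$.

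For the upper bound I would first record a $4$-star coloring of the base graphs and then lift it. The lift works as follows: given a $4$-star coloring $c$ of $C_3\times P_n$, define $c'(i,j)=c(i\bmod 3,\,j)$ on $C_{3k}\times P_n$. This is proper because consecutive rows of $C_{3k}$ reduce to adjacent rows of $C_3$; and any $P_4$ of $C_{3k}\times P_n$ projects, under reduction of the first coordinate mod $3$, to a $P_4$ of $C_3\times P_n$ with the same color sequence, because the only vertices that could collide after reduction lie in a repeated $P_n$-column and their $C_{3k}$-coordinates differ by exactly $2$ (a two-step walk returning to the same column must move by $\pm 2$), hence remain distinct mod $3$. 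Thus no bicolored $P_4$ is created, and it suffices to color $C_3\times P_4$ and $C_3\times P_5$. For $n=4$ the pattern
\[
\begin{pmatrix} 1&1&1&1\\ 4&2&2&4\\ 4&3&3&4 \end{pmatrix}
\]
is such a coloring: it is proper, and a direct check shows each of the six pairs of color classes induces a star forest, so it is a valid $4$-star coloring.

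The hard part will be producing and certifying the $n=5$ base coloring, which is genuinely more delicate. One cannot simply append a column to the $n=4$ pattern, and in fact no pattern with a monochromatic row can work: if a row were constant, then by the star-forest criterion the two interior columns $2$ and $4$ would each have to carry two distinct further colors, and these two pairs would have to be disjoint (columns at distance $2$ cannot repeat a color together with a constant row), which is impossible with only three remaining colors. Hence a fully two-dimensional pattern is required, which I would obtain by a short (finite) search respecting the ``shared colors lie in the same row'' constraint forced on adjacent columns, and then verify by the star-forest test; this explicit coloring, together with the one above and the periodic lift, completes the proof.
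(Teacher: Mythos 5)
Your overall architecture matches the paper's (lower bound via $P_2\square P_3$ and Lemma~\ref{lem-1}, upper bound via base colorings of $C_3\times P_4$ and $C_3\times P_5$ lifted periodically to $C_{3k}$), and two of your ingredients are genuine improvements. The explicit embedding of $P_2\square P_3$ into $C_3\times P_4$ on the six vertices $(1,1),(0,2),(2,3),(2,2),(1,3),(0,4)$ is correct (all seven grid edges are present in the tensor product) and replaces the paper's more fragile argument, which first asserts without proof that $C_3\times P_3$ has a unique $3$-star coloring up to renaming and then extends it by one column. Likewise, your mod-$3$ projection argument makes precise why repeating the $C_3$-pattern $k$ times preserves star colorings (a vertex collision under projection would force two path-vertices at distance two in the same $P_n$-column with first coordinates differing by $\pm 2$, which stay distinct mod $3$), where the paper only says ``by using suitable copies.'' Your $n=4$ base pattern also checks out: each of the six pairs of color classes induces a disjoint union of stars.

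The genuine gap is that the $4$-star coloring of $C_3\times P_5$ is never produced. You correctly observe that it cannot be obtained by appending a column to your $n=4$ pattern and that no pattern with a constant $C_3$-row can work, but you then defer the actual construction to ``a short (finite) search.'' Since this coloring is the entire content of the upper bound for $n=5$ (and, after lifting, for all $C_{3k}\times P_5$), the proof is incomplete without it. Such a coloring does exist --- the paper exhibits one in which the first and last $P_5$-columns are monochromatic in color $1$ and the three interior columns each read $2,3,4$ down the $C_3$-fiber, namely
\[
\begin{pmatrix} 1&2&2&2&1\\ 1&3&3&3&1\\ 1&4&4&4&1\end{pmatrix},
\]
and one checks directly that every pair of its color classes induces a star forest --- so your approach does not fail, but as written it stops one explicit matrix short of a proof.
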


\begin{proof}

The proof is divided into two cases.

\textbf{Case~1.} When $m = 3$.\\
Consider the graph $C_3 \times P_4$.
Let $V(C_3)  = \{u_1,u_2,u_3\}$, $V(P_4) = \{v_1,v_2,v_3,v_4\}$ and $V(C_3\times P_4) = \{(u_i,v_j) | i\in[3],j\in [4]\}$.
As $C_3 \times P_3$ is a subgraph of $C_3 \times P_4$, from Lemma \ref{lem-subgraph} and \ref{lem-21}, we have $\chi_s(C_3\times P_4) \geq \chi_s(C_3 \times P_3) = 3$. We have observed that the graph $C_3 \times P_3$ has a unique (up to permutation of colors) $3$-star coloring, which is given in Fig.~\ref{C3P3}. Suppose $\chi_s(C_3\times P_4) = 3$ and let $f$ be a 3-star coloring of $C_3 \times P_4$ with colors $a,b,c$. Then from the above observation, $f$ restricted to the vertices of subgraph $C_3 \times P_3$, gives a coloring as shown in the  Fig.~\ref{C3P3}.  Now consider the vertex $(u_1,v_4)$ of $C_3\times P_4$. Clearly, $f((u_1,v_4)) \notin \{b,c\}$, else $f$ is not proper coloring. Also $f((u_1,v_4)) \neq a$, else we get a bicolored path of length three. Therefore, $f((u_1,v_4)) \notin \{a,b,c\}$, which is a contraction to our assumption that $f$ is a 3-star coloring of $C_3 \times P_4$. Thus  $\chi_s(C_3\times P_4) \geq 4$. Since the graph $C_3\times P_4$ is a subgraph of $C_3 \times P_5$, we have $\chi_s(C_3\times P_5) \geq 4$. $4$-star colorings of $C_3\times P_4$ and $C_3\times P_5$ are given in Fig.~\ref{C3P45}. Therefore, $\chi_s(C_3\times P_n) = 4$, $n\in \{4,5\}$.

\begin{figure}[ht]
 \centering
\begin{tikzpicture}[scale=.02, transform shape]
\node [draw, shape=circle] (v1) at (-300,-20) {};
\node [draw, shape=circle] (v2) at (-240,-20) {};
\node [draw, shape=circle] (v3) at (-300,-80) {};
\node [draw, shape=circle] (v4) at (-240,-80) {};

\node [scale=50] at (-290,-30) {$a$};
\node [scale=50] at (-290,-50) {$b$};
\node [scale=50] at (-290,-70) {$c$};

\node [scale=50] at (-270,-30) {$a$};
\node [scale=50] at (-270,-50) {$b$};
\node [scale=50] at (-270,-70) {$c$};

\node [scale=50] at (-250,-30) {$a$};
\node [scale=50] at (-250,-50) {$b$};
\node [scale=50] at (-250,-70) {$c$};

\draw (v1)--(v2)--(v4)--(v3)--(v1);
\end{tikzpicture}
  \caption{~A $3$-star coloring of $C_3 \times P_3$}\label{C3P3}
\end{figure}

\textbf{Case~2.} When $m >3$.\\
For $m >3$ and $n\in \{4,5\}$, the graph $C_m\times P_n$ contains $P_2 \square P_3$ as a subgraph, thus from Lemma \ref{lem-subgraph} and \ref{lem-1}, we have $\chi_s(C_m\times P_n) \geq \chi_s(P_2 \square P_3) = 4$. For $n \in \{4,5\}$, $4$-star coloring of $C_{3k} \times P_n$ can be obtained by using suitable copies of colorings of  $C_3 \times P_n$, $n\in \{4,5\}$. Therefore, $\chi_s(C_m\times P_n)=4$, for $n \in \{4,5\}$.  \qed
 \end{proof}

\begin{figure}[ht]
 \centering
\begin{tikzpicture}[scale=.02, transform shape]
  \node [draw, shape=circle] (u1) at (-190,-20) {};
\node [draw, shape=circle] (u2) at (-90,-20) {};
\node [draw, shape=circle] (u3) at (-190,-80) {};
\node [draw, shape=circle] (u4) at (-90,-80) {};

\node [scale=50] at (-180,-30) {$1$};
\node [scale=50] at (-180,-50) {$1$};
\node [scale=50] at (-180,-70) {$1$};

\node [scale=50] at (-160,-30) {$2$};
\node [scale=50] at (-160,-50) {$3$};
\node [scale=50] at (-160,-70) {$4$};

\node [scale=50] at (-140,-30) {$2$};
\node [scale=50] at (-140,-50) {$3$};
\node [scale=50] at (-140,-70) {$4$};

\node [scale=50] at (-120,-30) {$2$};
\node [scale=50] at (-120,-50) {$3$};
\node [scale=50] at (-120,-70) {$4$};

\node [scale=50] at (-100,-30) {$1$};
\node [scale=50] at (-100,-50) {$1$};
\node [scale=50] at (-100,-70) {$1$};

\draw (u1)--(u2)--(u4)--(u3)--(u1);
\node [scale=50] at (-150,-5) {$C_3 \times P_5$};

\node [draw, shape=circle] (u1) at (-370,-20) {};
\node [draw, shape=circle] (u2) at (-290,-20) {};
\node [draw, shape=circle] (u3) at (-370,-80) {};
\node [draw, shape=circle] (u4) at (-290,-80) {};

\node [scale=50] at (-360,-30) {$1$};
\node [scale=50] at (-360,-50) {$1$};
\node [scale=50] at (-360,-70) {$1$};

\node [scale=50] at (-340,-30) {$2$};
\node [scale=50] at (-340,-50) {$3$};
\node [scale=50] at (-340,-70) {$4$};

\node [scale=50] at (-320,-30) {$2$};
\node [scale=50] at (-320,-50) {$3$};
\node [scale=50] at (-320,-70) {$4$};

\node [scale=50] at (-300,-30) {$2$};
\node [scale=50] at (-300,-50) {$3$};
\node [scale=50] at (-300,-70) {$4$};

\draw (u1)--(u2)--(u4)--(u3)--(u1);
\node [scale=50] at (-330,-5) {$C_3 \times P_4$};

\end{tikzpicture}
  \caption{~$4$-star colorings of $C_3 \times P_4$ and $C_3 \times P_5$}\label{C3P45}
\end{figure}

\begin{lemma} \label{lem-33}
     For every pair of positive integers $m$ and $n$, where $m\neq 3k$ for some $k \in \mathbb{N}$ and $n \in \{4,5\}$,  we have $4 \leq \chi_s(C_m \times P_n) \leq 5$.
\end{lemma}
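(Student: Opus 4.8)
The plan is to prove the two bounds separately, following the template already used in the preceding lemmas. For the lower bound, note that the hypothesis $m \neq 3k$ together with $m \geq 3$ forces $m \geq 4$, so I am always in the regime $m > 3$. Hence, exactly as in Case~2 of Lemma~\ref{C3kP45}, for $n \in \{4,5\}$ the graph $C_m \times P_n$ contains $P_2 \square P_3$ as a subgraph. Applying Lemma~\ref{lem-subgraph} and Lemma~\ref{lem-1} then yields $\chi_s(C_m \times P_n) \geq \chi_s(P_2 \square P_3) = 4$, which is the desired lower bound.

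For the upper bound I would exhibit explicit $5$-star colorings and glue them cyclically along the cycle direction, mirroring the Sylvester-based arguments of the two-cycle section. Recall that a coloring of $C_m \times P_n$ is an $m \times n$ pattern whose row index runs cyclically. First I would construct $5$-star colorings of the base blocks $C_4 \times P_n$ and $C_5 \times P_n$ for $n \in \{4,5\}$ that agree on a fixed set of boundary rows, so that any vertical stack of such blocks closes up into a valid star coloring of $C_{4a+5b} \times P_n$. By Lemma~\ref{thm-sylvester}, every integer $\geq 12$ is of the form $4a + 5b$ with $a,b \geq 0$, and among the integers below $12$ the non-multiples of $3$ that are representable this way are $4, 5, 8, 10$ (using $8 = 4+4$ and $10 = 5+5$). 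Thus these two blocks already cover every admissible $m$ except $m = 7$ and $m = 11$.

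The residual values $m = 7$ and $m = 11$ are precisely the non-multiples of $3$ that the Frobenius/Sylvester argument misses, since the Frobenius number of $\{4,5\}$ equals $11$. I would handle them by giving standalone $5$-star colorings of $C_7 \times P_n$ and $C_{11} \times P_n$ for $n \in \{4,5\}$ directly in a figure. Combining the lower bound with the colorings obtained from the blocks and from these two exceptional cases yields $4 \leq \chi_s(C_m \times P_n) \leq 5$ for all $m \neq 3k$ and $n \in \{4,5\}$; note that the statement only asks for this range, so no matching bound of the form $=4$ or $=5$ needs to be established.

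The main obstacle I anticipate is twofold. First, the base colorings of $C_4 \times P_n$ and $C_5 \times P_n$ must be designed so that their boundary rows coincide and so that the cyclic seams—where the last row of one block meets the first row of the next, and in particular the final wrap-around from the last block back to the first—never create a properly-colored but bicolored $P_4$. Because adjacency in the tensor product requires \emph{both} coordinates to be adjacent, this seam bookkeeping is the delicate part of making the concatenation legitimate. Second, I must actually find valid $5$-star colorings for the two exceptional lengths $7$ and $11$, which cannot be assembled from the blocks. Both tasks are finite, directly checkable constructions, so no genuinely new idea is required, but the verification of the seams and of the exceptional colorings is where the real work lies.
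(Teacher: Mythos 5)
Your lower bound is exactly the paper's: $m\neq 3k$ forces $m\geq 4$, so $C_m\times P_n$ contains $P_2\square P_3$ and Lemmas~\ref{lem-subgraph} and~\ref{lem-1} give $\chi_s(C_m\times P_n)\geq 4$. No issues there.

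For the upper bound you have chosen a far heavier route than necessary, and as written it is not yet a proof. The paper simply observes that $C_m\times P_n$ is a subgraph of $C_m\times C_n$, and since $m\geq 4$ and $n\in\{4,5\}$ this product is not one of the two exceptions in Theorem~\ref{th-CC}; hence $\chi_s(C_m\times P_n)\leq \chi_s(C_m\times C_n)=5$ by Lemma~\ref{lem-subgraph}, with no new construction required. Your plan instead rebuilds the Sylvester/block-gluing machinery from scratch for $C_m\times P_n$: it requires designing base colorings of $C_4\times P_n$ and $C_5\times P_n$ with matching boundary rows, verifying that no bicolored $P_4$ crosses any seam (including the wrap-around seam), and separately exhibiting colorings for the exceptional lengths $m=7$ and $m=11$. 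You correctly identify all of these as the ``real work,'' but you do not carry any of them out --- no explicit pattern is given, and the seam verification is only described, not performed. So the proposal is a plausible programme that could be completed, but in its current form the upper bound rests entirely on constructions that are promised rather than exhibited. Since the needed colorings of the tori $C_m\times C_n$ already exist in the paper and restrict to the cylinders $C_m\times P_n$ for free, I would strongly recommend replacing the entire second half of your argument with the one-line subgraph reduction.
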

\begin{proof}
For $m >3$ and $n \in \{4,5\}$, the graph $P_2 \square P_3$ is a subgraph of 
$C_m \times P_n$.  Thus from Lemma \ref{lem-subgraph} and \ref{lem-1} we have $\chi_s(C_m \times P_n) \geq 4$. As the graph $C_m \times P_n$ is a subgraph of $C_m \times C_n$, thus from Lemma \ref{lem-subgraph} and Theorem \ref{th-CC}, we have $\chi_s(C_m \times P_n) \leq 5$. \qed
\end{proof}
 
\begin{lemma}
For every positive integer $n \geq 6$, we have $\chi_s(C_{3} \times P_n)=5$.
\end{lemma}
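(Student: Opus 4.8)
The plan is to prove the two inequalities $\chi_s(C_3 \times P_n) \le 5$ and $\chi_s(C_3 \times P_n) \ge 5$ separately, in each case reducing to a graph whose star chromatic number is already known and invoking the subgraph monotonicity of Lemma~\ref{lem-subgraph}.

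For the upper bound I would use that $P_n$ is a spanning subgraph of $C_n$ (delete the edge $v_nv_1$), so that $C_3 \times P_n$ is a subgraph of $C_3 \times C_n$. Since $n \ge 6$, neither exceptional case $n \in \{3,5\}$ of Theorem~\ref{th-CC} occurs, hence $\chi_s(C_3 \times C_n) = 5$, and Lemma~\ref{lem-subgraph} gives $\chi_s(C_3 \times P_n) \le \chi_s(C_3 \times C_n) = 5$ with no computation.

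For the lower bound the key monotonicity observation is that $C_3 \times P_6$ is a subgraph of $C_3 \times P_n$ for every $n \ge 6$, so by Lemma~\ref{lem-subgraph} it suffices to prove $\chi_s(C_3 \times P_6) \ge 5$. Note that $C_3 \times P_5 \subseteq C_3 \times P_6$ together with Lemma~\ref{C3kP45} already yields $\chi_s(C_3 \times P_6) \ge 4$, so the real content is to detect the jump from $4$ to $5$ that occurs exactly at $n=6$. My plan is to exhibit an explicit copy of the $Z$-graph (or the $Y$-graph) of Fig.~\ref{fig:zy-graph} as a subgraph of $C_3 \times P_6$. Writing the vertices as $(i,j)$ with row $i \in \{1,2,3\}$ and column $j \in \{1,\dots,6\}$, and recalling that $(i,j)\sim(i',j')$ iff $i \ne i'$ and $|j-j'| = 1$, I would select a vertex set spanning all six columns and trace its incidences to match them, as an abstract graph, to $Z$ (resp.\ $Y$). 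Once the embedding is verified, Lemma~\ref{lem-subgraph} together with $\chi_s(Z)=5$ gives $\chi_s(C_3 \times P_6) \ge 5$, and hence $\chi_s(C_3 \times P_n) \ge 5$ for all $n \ge 6$; combined with the upper bound this yields equality.

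The main obstacle is precisely pinning down this embedding: because every edge of $C_3 \times P_6$ must change the row and advance the column by exactly one, the candidate vertex set and its incidence pattern are highly constrained, and one must check that the induced subgraph is genuinely isomorphic to the target rather than merely homomorphic to it. Should no such copy fit inside $P_6$ (forcing the base case up to $P_7$ or $P_8$ for the subgraph argument), the fallback is a direct rigidity argument: the $3$-star coloring of $C_3 \times P_3$ in Fig.~\ref{C3P3} is unique up to renaming colors, which forces any hypothetical $4$-star coloring of $C_3 \times P_6$ to be essentially determined column-by-column, and one then shows this rigidity cannot be propagated across six columns without creating a bicolored $P_4$ or an improper adjacency, exactly the obstruction already exploited when extending $C_3 \times P_3$ to $C_3 \times P_4$ in Lemma~\ref{C3kP45}.
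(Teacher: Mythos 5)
Your upper bound coincides with the paper's: $C_3\times P_n$ is a subgraph of $C_3\times C_n$, Lemma~\ref{th-c3cn} gives $\chi_s(C_3\times C_n)=5$ for $n\ge 6$, and Lemma~\ref{lem-subgraph} then yields $\chi_s(C_3\times P_n)\le 5$. Reducing the lower bound to the single graph $C_3\times P_6$ via Lemma~\ref{lem-subgraph} is also sound. The gap is in the lower bound for $C_3\times P_6$ itself. Your primary plan is to locate a copy of $Z$ or $Y$ inside $C_3\times P_6$, but you never produce one, and there is good reason to doubt that one exists: the paper deploys these gadgets wherever it can, yet for this case it switches to a rigidity argument instead. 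Concretely, $Y$ sits inside a component of $C_4\times P_4$, which is a chain of four vertex pairs with consecutive pairs completely joined; a short common-neighbour count in $C_3\times P_n$ shows that any two vertices have at most two common neighbours and that two consecutive complete joins of pairs $A_1$--$A_2$--$A_3$ force $A_3=A_1$, so no such chain fits. A proof cannot rest on an embedding that has not been exhibited and verified.

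Your fallback is the right kind of argument but cites the wrong rigidity fact. Uniqueness of the $3$-star coloring of $C_3\times P_3$ (Fig.~\ref{C3P3}) constrains only $3$-colorings and says nothing about a hypothetical $4$-star coloring of $C_3\times P_6$. What the paper actually uses is that the $4$-star coloring of $C_3\times P_5$ is unique up to renaming colors (Fig.~\ref{C3P5}): the first and last columns are monochromatic in a common color and each row is constant on the three middle columns. Granting that, the contradiction is immediate: a $4$-star coloring of $C_3\times P_6$ would restrict to this pattern both on columns $1$--$5$ and on columns $2$--$6$, forcing column $2$ (equivalently column $5$) to be simultaneously monochromatic and to use three distinct colors; alternatively, one extends the unique pattern by one column and checks, as in Case~1 of Lemma~\ref{C3kP45}, that no color is available for $(u_1,v_6)$. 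This uniqueness statement for $C_3\times P_5$ is the real content of the lower bound (the paper itself only asserts it after a case analysis), and since your proposal neither states nor establishes it, the argument does not close.
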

 \begin{proof}
For $n\geq 6$, the graph $C_3 \times P_n$ has $C_3 \times P_5$ as a subgraph. Thus from Lemma \ref{lem-subgraph} and \ref{C3kP45}, we have $\chi_s(C_{3} \times P_n) \geq \chi_s(C_{3} \times P_5) = 4$. We have observed that the graph $C_3 \times P_5$ has a unique coloring (up to permutation of colors) pattern with four colors $a,b,c,d$ as shown in the Fig.~\ref{C3P5}. By using arguments similar to Case 1 of Lemma \ref{C3kP45}, we can show that for $n\geq 6$, $\chi_s(C_{3} \times P_n) \geq 5$. Also $C_3 \times P_n$ is a subgraph of $C_3 \times C_n$, for $n\geq 6$, therefore from Lemma \ref{th-c3cn}, we have $\chi_s(C_3\times P_n) \leq \chi_s(C_3 \times C_n) = 5$. Altogether we have $\chi_s(C_3 \times P_n) = 5$. \qed

 \end{proof}
 
\begin{figure}[ht]
 \centering
\begin{tikzpicture}[scale=.02, transform shape]
  \node [draw, shape=circle] (u1) at (-190,-20) {};
\node [draw, shape=circle] (u2) at (-90,-20) {};
\node [draw, shape=circle] (u3) at (-190,-80) {};
\node [draw, shape=circle] (u4) at (-90,-80) {};

\node [scale=50] at (-180,-30) {$a$};
\node [scale=50] at (-180,-50) {$a$};
\node [scale=50] at (-180,-70) {$a$};

\node [scale=50] at (-160,-30) {$b$};
\node [scale=50] at (-160,-50) {$c$};
\node [scale=50] at (-160,-70) {$d$};

\node [scale=50] at (-140,-30) {$b$};
\node [scale=50] at (-140,-50) {$c$};
\node [scale=50] at (-140,-70) {$d$};

\node [scale=50] at (-120,-30) {$b$};
\node [scale=50] at (-120,-50) {$c$};
\node [scale=50] at (-120,-70) {$d$};

\node [scale=50] at (-100,-30) {$a$};
\node [scale=50] at (-100,-50) {$a$};
\node [scale=50] at (-100,-70) {$a$};

\draw (u1)--(u2)--(u4)--(u3)--(u1);

\end{tikzpicture}
  \caption{~A $4$-star coloring of $C_3 \times P_5$}\label{C3P5}
\end{figure}

\begin{lemma}
 For every positive integer $n \geq 4$, we have $\chi_s(C_4 \times P_n)=5$.
\end{lemma}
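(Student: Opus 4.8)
The plan is to prove the two bounds separately, reducing the lower bound to the single base case $n=4$. For the upper bound I would observe that $P_n$ is a subgraph of $C_n$ (delete one edge), hence $C_4\times P_n$ is a subgraph of $C_4\times C_n$. Since $\chi_s(C_4\times C_n)=5$ for all $n\ge 4$ by Lemma~\ref{th-c4cn}, Lemma~\ref{lem-subgraph} immediately gives $\chi_s(C_4\times P_n)\le 5$ for every $n\ge 4$.

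For the lower bound the key point is that it suffices to handle $n=4$: because $C_4\times P_4$ is a subgraph of $C_4\times P_n$ whenever $n\ge 4$, Lemma~\ref{lem-subgraph} reduces everything to showing $\chi_s(C_4\times P_4)\ge 5$. Being the tensor product of two connected bipartite graphs, $C_4\times P_4$ splits into two connected components, and each of them is isomorphic to the graph $H$ with four ``columns'' $1,2,3,4$, where each column is a pair of nonadjacent vertices and every vertex of column $i$ is joined to every vertex of column $i+1$ for $i\in\{1,2,3\}$ (so consecutive columns induce a $K_{2,2}$). As $H$ is a subgraph of $C_4\times P_4$, by Lemma~\ref{lem-subgraph} it is enough to prove $\chi_s(H)\ge 5$.

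To establish $\chi_s(H)\ge 5$ I would assume a $4$-star-coloring exists and encode it by the color set $S_i\subseteq\{1,2,3,4\}$ used on column $i$, with $|S_i|\in\{1,2\}$; call column $i$ \emph{mono} if $|S_i|=1$. Locating every possible bicolored $P_4$ in $H$ yields four constraints: (P) $S_i\cap S_{i+1}=\emptyset$ (properness of each $K_{2,2}$); (Q) no two consecutive columns are both mono, since otherwise the $C_4$ they span is bicolored; (R) $S_1\cap S_3=\emptyset$ or $S_2\cap S_4=\emptyset$, which kills the straight path across all four columns; and (S) if column $2$ is mono then $S_1\cap S_3=\emptyset$, and if column $3$ is mono then $S_2\cap S_4=\emptyset$, which kill the zig-zag paths of the forms $1\,2\,3\,2$ and $2\,3\,4\,3$. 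By (Q) the mono columns form an independent set of the path $1\text{-}2\text{-}3\text{-}4$, so up to the reflection $i\mapsto 5-i$ only the mono-sets $\emptyset,\{1\},\{2\},\{1,3\},\{1,4\}$ arise; in each case the disjointness requirements (P)--(S) become unsatisfiable over a palette of size four, the recurring reason being that two disjoint $2$-subsets cannot fit inside a $3$-element set. This contradiction yields $\chi_s(H)\ge 5$, and with the reduction above, $\chi_s(C_4\times P_n)\ge 5$ for all $n\ge 4$.

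The routine parts are the upper bound and the reduction to $n=4$. The main obstacle is the finite but delicate case analysis proving $\chi_s(H)\ge 5$: one must first be certain to have captured \emph{every} shape of bicolored $P_4$ in $H$ (the straight path and both zig-zag types) before ruling out all placements of mono columns. An attractive shortcut, should $H$ turn out to coincide with or contain the $Z$- or $Y$-graph of Fig.~\ref{fig:zy-graph}, would be to invoke $\chi_s(Z)=\chi_s(Y)=5$ together with Lemma~\ref{lem-subgraph}, thereby bypassing the case analysis altogether.
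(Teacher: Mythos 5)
Your proof is correct and, at the top level, follows the same template as the paper: the upper bound is obtained exactly as in the paper, from the fact that $C_4\times P_n$ is a subgraph of $C_4\times C_n$ together with Lemma~\ref{th-c4cn} and Lemma~\ref{lem-subgraph}. The difference lies entirely in the lower bound. The paper simply observes that $C_4\times P_n$ contains the $Y$-graph of Fig.~\ref{fig:zy-graph} and invokes $\chi_s(Y)=5$, a value that is only asserted in the preliminaries (``by performing a tedious case-by-case analysis'') and never proved. You instead reduce to $n=4$, correctly identify each of the two components of $C_4\times P_4$ as the graph $H$ obtained from $P_4$ by blowing each vertex up into two independent vertices (consecutive columns inducing $K_{2,2}$), and prove $\chi_s(H)\geq 5$ directly. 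Your constraint list is in fact complete: every $P_4$ of $H$ projects to a length-three walk on the column path $1$--$2$--$3$--$4$, and the bicolored ones are exactly those excluded by your conditions (P)--(S); moreover, in each of the five admissible mono-patterns the conditions force two disjoint $2$-subsets of the palette together with a further color outside their union, which is impossible with four colors, so every case closes. What your route buys is a self-contained, verifiable proof of the key lower bound; what the paper's route buys is brevity, at the cost of resting on an unproved, figure-based claim. Since your $H$ is in all likelihood precisely the paper's $Y$-graph (it is the natural minimal subgraph witnessing the bound for all $n\geq 4$), your argument in effect supplies the proof of $\chi_s(Y)=5$ that the paper omits.
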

\begin{proof}
 For $n\geq 4$, the graph $C_4\times P_n$ contains $Y$ as a subgraph, thus from Lemma \ref{lem-subgraph}, we have $\chi_s(C_4\times P_n) \geq \chi_s(Y) = 5$. Since $C_4 \times P_n$ is a subgraph of $C_4 \times C_n$, therefore from Lemma \ref{th-c4cn}, we have $\chi_s(C_4\times P_n) \leq \chi_s(C_4\times C_n) = 5$. Altogether we have $\chi_s(C_4\times P_n) = 5$.  \qed
\end{proof}

\begin{lemma}
     For every positive integer $n \geq 4$, we have $\chi_s(C_5 \times P_n)=5$.
\end{lemma}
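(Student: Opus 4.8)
The plan is to prove the two bounds $\chi_s(C_5 \times P_n) \le 5$ and $\chi_s(C_5 \times P_n) \ge 5$ separately, following the template of the preceding lemma for $C_4 \times P_n$: the upper bound will come from a suitable supergraph whose star chromatic number is already known, and the lower bound from a forbidden subgraph.

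For the upper bound, I would note that $P_n$ is a subgraph of $C_n$, so $C_5 \times P_n$ is a subgraph of $C_5 \times C_n$, and apply Lemma \ref{lem-subgraph}. For $n \ge 5$, Lemma \ref{th-c5cn} gives $\chi_s(C_5 \times C_n) = 5$, hence $\chi_s(C_5 \times P_n) \le 5$. The remaining value $n = 4$ is handled by the commutativity of the tensor product, $C_5 \times C_4 \cong C_4 \times C_5$, together with Lemma \ref{th-c4cn}, which yields $\chi_s(C_4 \times C_5) = 5$; thus $\chi_s(C_5 \times P_4) \le \chi_s(C_5 \times C_4) = 5$ as well, so the upper bound holds for every $n \ge 4$.

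For the lower bound, since $P_4$ is a subgraph of $P_n$ whenever $n \ge 4$, the graph $C_5 \times P_4$ is a subgraph of $C_5 \times P_n$; by Lemma \ref{lem-subgraph} it therefore suffices to prove $\chi_s(C_5 \times P_4) \ge 5$. I would do this by exhibiting an explicit copy of one of the graphs $Y$ or $Z$ of Figure \ref{fig:zy-graph}, each of star chromatic number $5$, inside $C_5 \times P_4$, and invoking Lemma \ref{lem-subgraph} a final time.

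The main obstacle is this embedding step. Writing $V(C_5 \times P_4) = \{(u_i, v_j) : i \in [5],\, j \in [4]\}$, where $(u_i, v_j)$ is adjacent to $(u_{i \pm 1}, v_{j \pm 1})$ with the indices of $C_5$ read modulo $5$, one must select a set of vertices whose adjacencies in this diagonal pattern reproduce the target graph exactly; the fact that every vertex of the two interior path-layers has degree $4$ while the end-layer vertices have degree $2$ constrains where the bicolored $P_4$ obstructions can be placed, and this is the part that needs care. Should a direct embedding be inconvenient, a fallback is an extension argument in the spirit of Case~1 of Lemma \ref{C3kP45}: first show that a small subpattern of $C_5 \times P_4$ admits an essentially unique $4$-coloring, and then argue that this coloring cannot be extended to the remaining vertices without a fifth color, forcing $\chi_s(C_5 \times P_4) \ge 5$.
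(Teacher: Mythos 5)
Your upper bound is exactly the paper's: $C_5\times P_n$ is a subgraph of $C_5\times C_n$, so Lemma~\ref{lem-subgraph} together with $\chi_s(C_5\times C_n)=5$ gives $\chi_s(C_5\times P_n)\le 5$; your explicit treatment of $n=4$ via $C_5\times C_4\cong C_4\times C_5$ handles a case the paper glosses over (its Lemma~\ref{th-c5cn} only covers $n\ge 5$). The lower-bound reduction to showing $\chi_s(C_5\times P_4)\ge 5$ is also the paper's reduction.

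The gap is in your primary plan for that last step. The paper does \emph{not} find $Y$ or $Z$ inside $C_5\times P_4$; it establishes $\chi_s(C_5\times P_4)=5$ only by an unreproduced exhaustive case analysis, and there is concrete reason to believe no such embedding exists. Any two distinct vertices of $C_5\times P_4$ have at most two common neighbours, because two distinct vertices of $C_5$ have at most one common neighbour in $C_5$; hence $C_5\times P_4$ contains no $K_{2,3}$. By contrast, the copy of $Y$ used for the $C_4\times P_n$ lower bound lives in a component of $C_4\times P_4$ that is built around a $K_{2,4}$ (antipodal vertices of $C_4$ have two common neighbours), and $Z$ does not even embed in $P_5\times P_n$ or in $P_6\times P_7$ (their star chromatic numbers are $4$), so it needs more room than the width-$4$ strip that $C_5\times P_4$ provides. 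This is presumably why the authors, who use the $Y$/$Z$-subgraph trick for $C_4\times P_n$, for $C_6\times P_n$ with $n\ge 8$, and for $C_m\times P_n$ with $m\ge 8$, abandon it precisely here. Your fallback --- a uniqueness-plus-extension argument in the spirit of Case~1 of Lemma~\ref{C3kP45}, or a direct exhaustive check of $C_5\times P_4$ --- is the route that actually works, but it carries the entire weight of the lower bound and is left unexecuted in your proposal (as, admittedly, it also is in the paper, which merely asserts the outcome of the case analysis).
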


\begin{proof}
By case by case analysis we found that $\chi_s(C_5\times P_4) = 5$.
  As the graph $C_5 \times P_4$ is a subgraph of $C_5\times P_n$ for $n\geq 5$, from Lemma \ref{lem-subgraph} we have $\chi_s(C_5 \times P_n) \geq \chi_s(C_5 \times P_4) = 5$. Since $C_5 \times P_n$ is a subgraph of $C_5 \times C_n$, thus from Lemma \ref{lem-subgraph} and \ref{th-c5cn}, we have $\chi_s(C_5\times P_n) \leq \chi_s(C_5\times C_n) = 5$. Altogether we have $\chi_s(C_5\times P_n) = 5$.  \qed
\end{proof}

\begin{lemma}
    For every positive integer $n \geq 6$, we have $\chi_s(C_6 \times P_n)=5$. 
\end{lemma}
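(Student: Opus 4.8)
The plan is to sandwich $\chi_s(C_6\times P_n)$ between $5$ and $5$, and to concentrate all the genuine work in a single base case. For the upper bound I would argue uniformly for all $n\ge 6$: the graph $C_6\times P_n$ is a subgraph of $C_6\times C_n$, and since $6\le n$, Lemma~\ref{th-C6C8C9C10} gives $\chi_s(C_6\times C_n)=5$; hence Lemma~\ref{lem-subgraph} yields $\chi_s(C_6\times P_n)\le 5$ for every $n\ge 6$.

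For the lower bound the key reduction is monotonicity under subgraphs. Since $P_6$ is a subgraph of $P_n$ for every $n\ge 6$, the graph $C_6\times P_6$ is a subgraph of $C_6\times P_n$, so by Lemma~\ref{lem-subgraph} it suffices to prove the single base case $\chi_s(C_6\times P_6)\ge 5$; this bound then propagates to all $n\ge 6$. For the range $n\ge 8$ one even gets it for free, since $P_6\times P_n$ is a subgraph of $C_6\times P_n$ and we already showed $\chi_s(P_6\times P_n)=5$ for $n\ge 8$; the whole difficulty is therefore packed into $n\in\{6,7\}$, both of which are subsumed by the base case.

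To establish $\chi_s(C_6\times P_6)\ge 5$ I would exhibit one of the gadgets $Z$ or $Y$ of Fig.~\ref{fig:zy-graph} (each of star chromatic number $5$) as a subgraph of $C_6\times P_6$. The crucial structural point is that the wraparound edges of $C_6$ — the edges between cycle-rows $0$ and $5$ — are genuinely indispensable: the plain grid $P_6\times P_6$ already admits a $4$-star coloring (its star chromatic number is $4$ by the theorem on tensor products of two paths), so the ``path part'' alone cannot force a fifth color, and a copy of $Z$ (respectively $Y$) must be routed through these extra adjacencies. Concretely I would fix one component of $C_6\times P_6$ and trace the vertices $(i,j)$ realizing the gadget's adjacency pattern, using a wraparound step $(5,j)\sim(0,j{+}1)$ precisely where the analogous step is unavailable in $P_6\times P_6$.

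The main obstacle is exactly this base case, and it is worth flagging why the obvious route is blocked. The most natural candidate would embed $P_4\square P_4$ (which has $\chi_s=5$ by Lemma~\ref{lem-1}), but this provably fails here: the standard diagonal embedding $(a,b)\mapsto(a{+}b,\,a{-}b)$ needs both factors to have at least $7$ vertices, since $a{+}b$ and $a{-}b$ each take $7$ distinct values on a $4\times 4$ grid; modulo $6$ the two extreme antidiagonal corners $(0,0)$ and $(3,3)$ collide, so $P_4\square P_4$ is not a subgraph of $C_6\times P_6$ (nor of $C_6\times P_7$, where the same collision persists). Consequently the argument cannot be reduced to the grid result and must instead proceed through the smaller gadgets $Z$/$Y$, or, if a clean embedding resists, through a finite case analysis that assumes a $4$-star coloring of $C_6\times P_6$ and forces a bicolored $P_4$, in the same spirit as the base cases $\chi_s(C_5\times P_4)=5$ and $\chi_s(C_3\times C_3)=6$ treated elsewhere in the paper.
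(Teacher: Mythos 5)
Your overall architecture matches the paper's: the upper bound is obtained exactly as in the paper, from $C_6\times P_n$ being a subgraph of $C_6\times C_n$ together with Lemma~\ref{th-C6C8C9C10} and Lemma~\ref{lem-subgraph}, and for $n\ge 8$ your lower bound via $P_6\times P_n$ (equivalently, via the gadget $Z$) is also the paper's argument. Your reduction of the remaining cases $n\in\{6,7\}$ to the single base case $\chi_s(C_6\times P_6)\ge 5$ by subgraph monotonicity is a correct and tidy observation.

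The gap is that this base case is never actually proved. You propose to exhibit $Z$ or $Y$ as a subgraph of $C_6\times P_6$, but no embedding is given, and the surrounding discussion --- that a copy of the gadget ``must be routed through'' the wraparound edges --- is a heuristic, not an argument: the $4$-star-colorability of $P_6\times P_6$ only shows that any witness for the lower bound has to use a wraparound edge, not that $Z$ or $Y$ in particular embeds. Note that $Z$ needs eight columns to appear inside a six-row grid, so whether six path-columns plus the wraparound adjacencies of $C_6$ suffice to host it is precisely the point that would have to be checked; it is telling that the paper itself does not argue via a gadget here but instead settles $\chi_s(C_6\times P_6)=\chi_s(C_6\times P_7)=5$ by a (stated but unexhibited) exhaustive case analysis. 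Your fallback --- assume a $4$-star coloring of $C_6\times P_6$ and force a bicolored $P_4$ --- is the right kind of argument but is likewise not carried out. (Your side remark that $P_4\square P_4$ is not a subgraph of $C_6\times P_6$ also only rules out one particular diagonal embedding rather than all embeddings, though nothing rests on it.) As written, the proposal establishes $\chi_s(C_6\times P_n)=5$ for $n\ge 8$ and only $\chi_s(C_6\times P_n)\le 5$ for $n\in\{6,7\}$; the matching lower bound for $n\in\{6,7\}$ remains an unverified claim.
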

\begin{proof}
    For $n \geq 8$, the graph $C_{6}\times P_n$ contains $Z$ as a subgraph. Thus from Lemma \ref{lem-subgraph}, we have $\chi_s(C_6 \times P_n) \geq \chi_s(Z) = 5$. 
    Since $C_6 \times P_n$ is a subgraph of $C_6 \times C_n$, therefore from Lemma \ref{th-C6C8C9C10}, we have $\chi_s(C_6\times P_n) \leq \chi_s(C_6\times C_n) = 5$. Altogether, for $n \geq 8$, we have $\chi_s(C_6\times P_n) = 5$. Also by tedious case by case analysis we found that $\chi_s(C_6 \times P_6) = 5$ and $\chi_s(C_6 \times P_7) = 5$. \qed
\end{proof}

\begin{lemma}
    For every positive integer $n \geq 4$, we have $\chi_s(C_7 \times P_n)=5$.
\end{lemma}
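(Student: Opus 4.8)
The plan is to sandwich $\chi_s(C_7 \times P_n)$ between $5$ and $5$, treating the upper and lower bounds separately in the spirit of the preceding lemmas.

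For the upper bound, I would first observe that $P_n$ is a spanning subgraph of $C_n$ for every $n \geq 4$ (delete one edge of $C_n$), so $C_7 \times P_n$ is a subgraph of $C_7 \times C_n$, and Lemma~\ref{lem-subgraph} gives $\chi_s(C_7 \times P_n) \leq \chi_s(C_7 \times C_n)$. Using that the tensor product is commutative up to isomorphism, $\chi_s(C_7 \times C_n) = 5$ can then be read off from the already-proven cycle--cycle results: Lemma~\ref{th-c4cn} for $n=4$ (via $C_4 \times C_7$), Lemma~\ref{th-c5cn} for $n=5$ (via $C_5 \times C_7$), Lemma~\ref{th-C6C8C9C10} for $n=6$ (via $C_6 \times C_7$, since $7 \geq 6$), and Lemma~\ref{C7Cn} for every $n \geq 7$. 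This covers all $n \geq 4$ and yields $\chi_s(C_7 \times P_n) \leq 5$.

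For the lower bound, the key reduction is that $C_7 \times P_4$ is a subgraph of $C_7 \times P_n$ for every $n \geq 4$, so by Lemma~\ref{lem-subgraph} it suffices to prove the single inequality $\chi_s(C_7 \times P_4) \geq 5$; this then propagates to all $n$. Alternatively, for $n \geq 7$ the bound comes cleanly for free, since $C_7 \times P_n$ contains $P_7 \times P_7$ and hence $P_4 \square P_4$, so Lemmas~\ref{lem-subgraph} and~\ref{lem-1} already force $\chi_s \geq 5$; the genuine work is therefore at $n \in \{4,5,6\}$, all of which reduce to $C_7 \times P_4$. To establish $\chi_s(C_7 \times P_4) \geq 5$, I would first search for an explicit copy of a known $5$-chromatic obstruction (the $Y$-graph, or $P_4 \square P_4$) inside $C_7 \times P_4$, exploiting the wrap-around edges of $C_7$; if such a copy is found, Lemma~\ref{lem-subgraph} closes the argument at once.

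The hard part will be this last step. The clean grid argument that works for $n \geq 7$ is unavailable at $n = 4$: every vertex of $C_7 \times P_4$ has degree $2$ or $4$ (there are no degree-$3$ vertices, since $\deg(u_i,v_j) = 2\deg_{P_4}(v_j)$), and $C_7$ contains no short even cycle, so one cannot simply quote a $C_4$-based or grid-based subgraph as in the $C_4 \times P_n$ case. I therefore expect that ruling out a $4$-star-coloring of $C_7 \times P_4$ will require a (possibly computer-assisted) exhaustive case analysis, following the route the paper uses for $\chi_s(C_5 \times P_4) \geq 5$: fix the (essentially forced) colors on a copy of $C_7 \times P_3$ and show that no proper extension to the fourth column avoids creating a bicolored $P_4$. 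Once $\chi_s(C_7 \times P_4) \geq 5$ is in hand, combining it with the upper bound completes the proof.
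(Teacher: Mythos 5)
Your proposal matches the paper's argument essentially step for step: the upper bound comes from $C_7\times P_n\subseteq C_7\times C_n$ together with the cycle--cycle lemmas, the lower bound for $n\ge 7$ from a $P_4\square P_4$ subgraph, and the cases $n\in\{4,5,6\}$ are left to an exhaustive case analysis, which is exactly what the paper does (it simply asserts $\chi_s(C_7\times P_4)=\chi_s(C_7\times P_5)=\chi_s(C_7\times P_6)=5$ by ``tedious case by case analysis'' without further detail). Your one genuine refinement is noting that, via Lemma~\ref{lem-subgraph}, all three small cases reduce to the single claim $\chi_s(C_7\times P_4)\ge 5$.
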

\begin{proof}
    For $n\geq 7$, the graph $C_{7}\times P_n$ contains $P_4 \square P_4$ as a subgraph. Thus from Lemma \ref{lem-subgraph} and \ref{lem-1}, we have $\chi_s(C_7 \times P_n) \geq \chi_s(P_4 \square P_4) = 5$. Since $C_7 \times P_n$ is a subgraph of $C_7 \times C_n$, thus from Lemma \ref{lem-subgraph} and \ref{C7Cn}, we have $\chi_s(C_7\times P_n) \leq \chi_s(C_7\times C_n) = 5$. Altogether, for $n \geq 7$, we have $\chi_s(C_7\times P_n) = 5$. Also by tedious case by case analysis we found that $\chi_s(C_7 \times P_4) = \chi_s(C_7 \times P_5)=\chi_s(C_7 \times P_6)=5$. \qed
\end{proof}

\begin{lemma}
For every pair of positive integers $m$ and $n$, where $m \geq 8$, $n \geq 6$, we have 
$\chi_s(C_m \times P_n)= 5$.
\end{lemma}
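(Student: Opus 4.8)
The plan is to establish the equality in two halves — a lower bound of $5$ coming from a suitable subgraph, and an upper bound of $5$ coming from the embedding of $C_m \times P_n$ into $C_m \times C_n$ — and then combine them, exactly in the style of the preceding lemmas.

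For the lower bound, I would exhibit a subgraph of $C_m \times P_n$ whose star chromatic number is already known to equal $5$. Since $m \geq 8$, the cycle $C_m$ contains a path on $8$ vertices, i.e.\ $P_8$ is a subgraph of $C_m$ (delete all but seven consecutive edges); and since $n \geq 6$, $P_6$ is a subgraph of $P_n$. Because the tensor product is monotone under taking subgraphs in each factor, $P_8 \times P_6$ is a subgraph of $C_m \times P_n$. By the theorem on star coloring of the tensor product of two paths (Section~\ref{sec-PP}), applied to the unordered pair with smaller factor $6$ and larger factor $8$ (the case $m=6$, $n \geq 8$), we have $\chi_s(P_6 \times P_8) = 5$. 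Lemma~\ref{lem-subgraph} then yields $\chi_s(C_m \times P_n) \geq \chi_s(P_8 \times P_6) = 5$. The only point needing care is that $P_8 \times P_6$ is the correct subgraph to force the bound at the boundary value $n=6$: a more symmetric choice such as $P_7 \times P_7$ would fail to embed once $n=6$, whereas $P_8 \times P_6$ uses the width $8$ entirely inside $C_m$ and only width $6$ inside $P_n$, so it embeds for every $m \geq 8$ and every $n \geq 6$.

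For the upper bound, I would use that $P_n$ is a subgraph of $C_n$ for every $n \geq 3$, so that $C_m \times P_n$ is a subgraph of $C_m \times C_n$. Hence, by Lemma~\ref{lem-subgraph}, $\chi_s(C_m \times P_n) \leq \chi_s(C_m \times C_n)$. Now I invoke Theorem~\ref{th-CC}: since the tensor product is commutative, $\chi_s(C_m \times C_n)$ depends only on the unordered pair $\{m,n\}$, and with $m \geq 8$ and $n \geq 6$ we have $\min\{m,n\} \geq 6 > 3$, so the pair avoids the two exceptional cases $C_3 \times C_3$ and $C_3 \times C_5$. Therefore $\chi_s(C_m \times C_n) = 5$, which gives $\chi_s(C_m \times P_n) \leq 5$.

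Combining the two bounds gives $5 \leq \chi_s(C_m \times P_n) \leq 5$, hence $\chi_s(C_m \times P_n) = 5$, as claimed. I do not anticipate a genuine obstacle here: both halves reduce, via subgraph monotonicity (Lemma~\ref{lem-subgraph}), to results already proved in the paper — the path–path tensor theorem for the lower bound and Theorem~\ref{th-CC} for the upper bound. The work is essentially bookkeeping, namely fixing the right path lengths for the embedding at $n=6$ and confirming that the ambient cycle–cycle product stays outside its two exceptional pairs.
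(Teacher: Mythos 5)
Your proposal is correct and follows essentially the same route as the paper: both halves are subgraph-monotonicity arguments, with the upper bound obtained identically from $C_m \times P_n \subseteq C_m \times C_n$ and the cycle--cycle theorem. The only difference is cosmetic: for the lower bound the paper cites the $Z$-graph (with $\chi_s(Z)=5$) as a subgraph of $C_m\times P_n$ directly, whereas you route through $P_8\times P_6$ and the path--path theorem --- whose own proof of $\chi_s(P_6\times P_8)\ge 5$ rests on that same $Z$-graph --- so the two arguments coincide in substance.
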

 
\begin{proof}
For $m \geq 8$ and $n \geq 6$, the graph $C_{m}\times P_n$ contains $Z$ as a subgraph. Thus from Lemma \ref{lem-subgraph}, we have  $\chi_s(C_m \times P_n) \geq \chi_s(Z) = 5$. Since $C_m \times P_n$ is the subgraph of $C_m \times C_n$, therefore from Lemma \ref{lem-subgraph}, \ref{th-C6C8C9C10} and \ref{th-C11}, we have $\chi_s(C_m\times P_n) \leq \chi_s(C_m\times C_n) = 5$ for $m\geq 8$ and $n \geq 6$. Altogether we have $\chi_s(C_m\times P_n) = 5$. \qed

\end{proof}

\bibliographystyle{plain}
\bibliography{ref.bib}

\end{document}